\theoremstyle{plain}
\newtheorem{theorem}{Theorem}%[chapter]% reset theorem numbering for each chapter
\theoremstyle{definition}
\newtheorem{lemma}[theorem]{Lemma}% definition numbers are dependent on theorem numbers
\newtheorem{remark}[theorem]{Remark}% definition numbers are dependent on theorem numbers
\newtheorem{proposition}[theorem]{Proposition}% definition numbers are dependent on theorem numbers
\def\eps{\varepsilon}
\newcommand{\kk}[1]{\relax}
\newcommand{\nus}{\sigma}
\newcommand{\GO}{G}
\newcommand{\AO}{\alpha}
\newcommand{\EC}{\epsilon_\textrm{J}}
\newcommand{\SO}{s}
\newcommand{\HC}{\textrm{h}}
\newcommand{\cP}{\mathcal P}
\newcommand{\RO}{r}
\newcommand{\RP}{\rho}
\newcommand{\UNP}{_{0}}
\newcommand{\CT}{c}
\newcommand{\EXP}{\textrm{e}}
\newcommand{\RPETBP}{RPETBP}
\def\textb{\relax}
\def\textr{\relax}
\begin{document}
\title{Global instability in the \textb{restricted planar} elliptic three body
problem\footnote{AD and TMS were partially supported by the Spanish MINECO-FEDER Grant MTM2015-65715-P,
the Catalan Grant 2017SGR1049 and the Russian Scientific Foundation Grant 14-41-00044.
VK was partially supported by the DMS-NSF grant 1402164 and the Simons Fellowship.}}
\author{Amadeu Delshams$^1$, Vadim Kaloshin$^2$, Abraham de la Rosa$^3$\\
        and Tere M. Seara$^4$\\
\begin{tabular}{ll}
$^1$&Department of Mathematics and Laboratory of Geometry and Dynamical\\
&Systems, Universitat Polit\`ecnica de Catalunya, \texttt{Amadeu.Delshams@upc.edu}\\
$^2$&University of Maryland at College Park, \texttt{vadim.kaloshin@gmail.com}\\
$^3$&Universitat Polit\`ecnica de Catalunya\footnote{Now at GeoNumerics, S.L.}, \texttt{abraham.delarosa@gmail.com}\\
$^4$&Department of Mathematics, Universitat Polit\`ecnica de Catalunya and\\
&BGSMath, \texttt{Tere.M-Seara@upc.edu}
\end{tabular}}
\maketitle
\begin{abstract}
The   restricted planar elliptic three body problem (\RPETBP) describes the motion of a massless
particle (a comet or an asteroid) under the gravitational field of two massive bodies (the primaries, say
the Sun and Jupiter) revolving around their center of mass on elliptic orbits with some positive eccentricity.
The aim of this paper is to show the existence of orbits whose angular momentum performs arbitrary
excursions in a large region. In particular, there exist diffusive orbits, that is, with a large variation of
angular momentum.

The leading idea of the proof consists in analyzing parabolic motions of the comet.
By a well-known result of McGehee, the union of future (resp. past) parabolic orbits is
an analytic manifold $\cP^+$ (resp. $\cP^-$). In a properly chosen coordinate
system these manifolds are stable (resp. unstable) manifolds
of a manifold at infinity $\cP_\infty$, which we call the manifold at parabolic infinity.

%In other words, studying the so-called \textb{manifold at %parabolic infinity}

On $\cP_\infty$ it is possible to define two scattering maps, which contain the
map structure of the homoclinic trajectories to it, i.e. orbits parabolic both in the future
and the past. Since the inner dynamics inside \textb{$\cP_\infty$} is trivial, two different
scattering maps are used. The combination of these two scattering maps permits
the design of the desired diffusive pseudo-orbits.
Using shadowing techniques and these pseudo orbits we show
the existence of true trajectories of the {\RPETBP} whose angular
momentum varies in any predetermined fashion.
%}

\par\vspace{12pt}
\noindent\emph{2000 Mathematics Subject Classification}:
Primary 37J40, 70F15.

%\par\vspace{12pt}
\noindent\emph{Keywords}:
  Elliptic Restricted Three Body problem,
% \textb{
parabolic motions,
 Manifold of parabolic motions at infinity,
 %}
  Arnold diffusion,
  splitting of separatrices,
  Melnikov integral.
\end{abstract}

\section{Main result and methodology}

The   restricted planar elliptic three body problem (\RPETBP) describes
the motion $q$ of a massless particle (a \emph{comet}) under
the gravitational field of two massive bodies (the \emph{primaries},
say the \emph{Sun} and \emph{Jupiter}) with mass ratio $\mu$ revolving
around their center of mass on elliptic orbits with eccentricity $\EC $.
In this paper we search for trajectories of motion which show a large
variation of the angular momentum $G=q\times \dot{q}$.
In other words, we search for global instability (``diffusion'' is
the term usually used) in the angular momentum of this problem.
Notice that for $\mu=0$ the angular momentum is a first integral.

If the eccentricity \textb{of Jupiter} vanishes,
the primaries revolve along circular orbits, and such diffusion
is not possible, since the restricted (planar)  circular
three body problem (RPCTBP)
%\footnote{If the problem is not resticted we can't say that this problem is circular.
%Therefore, circular should come after restricted %}
is governed by an autonomous
Hamiltonian with two degrees of freedom. This is not the case
for the \textb{\RPETBP}, which is a 2+1/2 degrees-of-freedom
Hamiltonian system with time-periodic Hamiltonian.
Our main result is the following

%\begin{theorem}\label{MainResult}
%There exist two constants $C>0$, $c>0$ and $\mu^*=\mu^*(C,c)>0$ such that for
%any $0<\EC <c/C$ and $0<\mu<\mu^*$, and for any two values of the angular momentum in the region
%$C\leq G_1^*<G_2^*\leq c/\EC $, there exists a trajectory of the \textb{\RPETBP} such that
%$G(0)<G^*_1$, $G(T)>G^*_2$ for some $T>0$.
%\end{theorem}

\begin{theorem}\label{MainResult}
There exist two constants $C>0$, $c>0$ such that for
any $0<\EC <c/C$ there is $\mu^*=\mu^*(C,c,\EC)>0$\footnote{
the upper bound on $\mu^*$ can be improved in the sense that for
$\EC\le c/G_2^*$ we can choose $\mu^*=\mu^*(C,c,c/G_2^*)$}
such that for any $0<\mu<\mu^*$
and any $C\leq G_1^*<G_2^*\leq c/\EC $
there exists a trajectory of the \textb{\RPETBP} such that
$G(0)<G^*_1$, $G(T)>G^*_2$
for some $T>0$.
\end{theorem}

This result will be a consequence of Theorem~\ref{Thm:Main},
%, where the large $C$ and the small constant $c$ are explicitly computed ($C=32$, $c=1/8$), and
where it is also shown the existence
of trajectories of motion such that their angular momentum performs
arbitrary excursions along the region
$C\leq G_1^*<G_2^*\leq c/\EC $. Comments about the values $C$ and $c$ can be found in Remark \ref{rem:Cc}.
\smallskip

\subsection{Previous works}
Let us recall related results about oscillatory motions and diffusion for the RPCTBP or the
{\RPETBP}.
They hold close to a region when there is some kind of hyperbolicity in the Three Body Problem,
like the Euler libration points~\cite{LlibreMS85,CapinskiZ11,DelshamsGR13,DelshamsGR16}, collisions~\cite{Bolotin06}, the \textb{parabolic} infinity
~\cite{GoK, LlibreS80,Xia93, Xia92, Moeckel07,Mos,MartinezP94, MartinezS14}
or near mean motion resonances~\cite{FejozGKR14}.
Aubry-Mather theory was used to study oscillatory
motions and instabilities not close to parabolic motions
\cite{GaK}.\smallskip

Among these papers, two were very influential for our
computations: the first one is \cite{LlibreS80}, where the method of steepest descent
was used along special complex paths to compute several integrals, and the second is
\cite{MartinezP94}, whhere asymptotic formulae for a scattering map on
the infinity manifold for large values of $\EC  \GO $ is computed. We also believe \cite{GuardiaMS12}
to be very important in the future, since the proof of transversal manifolds of the infinity
manifold is established for the RPCTBP for any $\mu\in(0,1/2]$.\smallskip

It is worth to mention the paper \cite{Bolotin06} where the existence of trajectories with
diffusion of  $G$ was proven assuming small $0<\eps$ and $0<\mu\ll\eps$.
Diffusive trajectories in \cite{Bolotin06} are of a very different nature:
$G$ travels in a bounded interval while trajectories come close to collisions.
In the present paper  $G$ is very large, and trajectories come near infinity.
However the idea of the proof in \cite{Bolotin06} is similar: after regularization of collisions
there appears a normally hyperbolic symplectic invariant manifold $M$ with trivial inner dynamics
and it is possible to define several scattering maps which give rise to diffusive trajectories.
\smallskip

\subsection{Comments on the proof: a parabolic infinity and scattering maps}
Concerning the proof of our main result, let us first notice that, for a non-zero mass parameter
small enough ($0<\mu \ll 1/2$) and zero eccentricity ($\EC=0$), the RPCTBP is non-integrable.  Although
for  large $G$ it is very close to integrable, since  its chaotic zones have a size which is exponentially
small for large $G$, more precisely, of size $O( \exp(-G^3/3))$\,(see ~\cite{LlibreS80,GuardiaMS12}).
This phenomenon adds the first difficulty in proving the global instability of
the angular momentum $G$ in the \textb{\RPETBP} for large values of $G$.\smallskip

The framework for proving our result consists in considering the motion close to
the parabolic orbits of the Kepler problem that takes place when the mass
parameter $\mu$ is zero. To this end we study the \emph{manifold at parabolic infinity},
which turns out to be an invariant object \emph{topologically equivalent to
a normally hyperbolic invariant manifold} (TNHIM), in the sense that it is an invariant manifold of fixed points which, even if it is not normally hyperbolic,
it  has stable and unstable manifolds which consist of the union of the stable and unstable manifolds of its fixed points as proved in \cite{GuardiaMSS17}.
\smallskip

More concretely, recall that a motion of the comet $q(t)$ is called
{\it future (resp. past) parabolic} if $\lim_{t\to +\infty} |q(t)|=\infty$
and $\lim_{t\to +\infty} \dot q(t)=0$ (resp. $t\to +\infty$ is
replaced by $t\to -\infty$).
For the RPCTBP McGehee \cite{McGehee73} proved
(see \cite{GuardiaMSS17} for the {\RPETBP}) that the set of future (resp. past) parabolic motions, denoted
$\cP^+_\mu$ (resp. $\cP^-_\mu$), is an immersed analytic
manifold.
The intersection $\cP^+_\mu\cap \cP^-_\mu$
consists of orbits both future and  past  parabolic.
For $\mu=0$ we have that $\cP^+_0=\cP^-_0$ and
they correspond to parabolic motions of the Kepler problem (between the Sun and the comet).
These manifolds are stable and unstable manifolds of
the manifold at the parabolic infinity denoted $\cP_\infty$.
The infinity manifold  $\cP_\infty$ is independent of $\mu$ and turns out to
be topologically equivalent to a normally hyperbolic
invariant manifold (TNHIM).\smallskip
%}}

%\textb{

%}

On this TNHIM $\cP^\infty$, it is possible to define two \emph{scattering maps}~\cite{DelshamsLS00,DelshamsLS08}, which contain
the map structure of the homoclinic  trajectories to $\cP^\infty$. A non-canonical symplectic
structure still persists close to $\cP^\infty$ and extends naturally to
a $b^3$-symplectic structure in the sense of \cite{Scott13,KiesenhoferMS15}). Therefore,
on $\cP^\infty$, it is possible to define a symplectic scattering map, which contains
the map structure of the homoclinic trajectories to the TNHIM. Unfortunately, the inner
dynamics within $\cP^\infty$ is trivial, so it cannot be combined with the scattering map
to produce pseudo-orbits adequate for diffusion, and adds a second difficulty.
Because of this, in this paper we introduce the use of \emph{two} different scattering
maps whose combination produces the desired diffusive pseudo-orbits. It is worth remarking
that this strategy of combining several scattering maps to get diffusing orbits have been
already applied to several problems~\cite{Bolotin06,DelshamsGR16,DelshamsS17,DelshamsS17a}.
%which eventually give rise to true trajectories of the system
%with the help of  the shadowing results given in
%\cite{GuardiaMS15}.
Using the results in \cite{GuardiaMSS17} we prove
the existence of orbits of the system shadowing diffusive
pseudo-orbits.

The main issue of computing the two scattering maps consists in evaluating
the \emph{Melnikov potential}~\eqref{cL} associated to the TNHIM $\cP^\infty$.
The main difficulty comes from the fact that its size is exponentially small for
a large angular momentum $G$, so it is necessary to perform very accurate
estimates for its Fourier coefficients. Such computations are performed in
\textb{Section \ref{EstMeliPot}}, see  Theorem~\ref{thepropositionmain},
and they involve a careful treatment of several Fourier expansions, as well as
the computation of several integrals using the method of steepest descent
along adequate complex paths, playing both with the eccentric and
the true anomaly. To guarantee the convergence of the Fourier series,
we have to assume that $\GO $ is large enough ($\GO \geq C$, $C=32$),
and $\EC $ small enough ($\GO  \EC \leq c$, $c=1/8$). For a larger value of
$C$ and a  smaller value of $c$, one can ensure
%Under these two assumptions,
that the dominant part of the Melnikov potential consists on four
harmonics, from which it is possible to compute the existence
of two functionally independent scattering maps (see Remark \ref{rem:Cc})
which are globally defined in the manifold of parabolic infinity $\cP_\infty$.

The combination of these two scattering maps permits the design of the desired diffusive pseudo-orbits,
under the assumption of a mass \textb{$\mu$  very} small compared to the eccentricity ($0<\mu<\mu^*$,
see~\eqref{Boundmu}).
Shadowing these pseudo-orbits by
true trajectories of the system is done using the results of
\cite{GuardiaMSS17}.

It is worth noticing that since {\it all the diffusive trajectories}
found in this paper shadow ellipses close to parabolas of
the Kepler problem, that is, with a very large semi-major axis,
their energy is close to zero. The orientation of their
semi-major axis \textb{(precession)}  changes only slightly
at each revolution.

%\kk{aclarir}

\subsection{Other parabolic regimes}
The case of arbitrary eccentricity $0<\EC <1$ and arbitrary  mass parameter
$0<\mu<1$ remains open in this paper. \textb{As it turns out}, the case
$\EC  \GO  \approx 1$ involves the analysis of an infinite number of dominant
Fourier coefficients of the Melnikov potential, whereas for the case $\EC  G > 1$,
the qualitative properties of the Melnikov function should be known without
using its Fourier expansion. Larger values of the mass parameter $\mu$
than those considered in this paper involve improving the estimates of
the error terms of the splitting of separatrices in complex domains, as is
usual when the splitting of separatrices is exponentially small. The computation
of the explicit trajectories from the pseudo-orbits found in this paper needs
a suitable shadowing result given in \cite{GuardiaMSS17}, which  involves
the translation to TNHIM of the usual shadowing techniques for NHIM.

\subsection{Plan of the paper}
The plan of this paper is as follows. In Section~\ref{Sec:Setting} we introduce the equations of the \RPETBP,
as well as the McGehee coordinates to be used to study the motion close to infinity.
In Section~\ref{sectionmu0} we recall the geometry of the Kepler problem, i.e. when $\mu=0$,
%the mass parameter vanishes,
close to the \emph{parabolic infinity manifold} and its associated separatrix. Next, in
Section~\ref{Sec:ERTBP}, we study the transversal intersection of the invariant manifolds
for the \textb{\RPETBP}, as well as the \emph{scattering map} associated, which depend on
the \emph{Melnikov potential} of the problem, whose detailed computation is deferred to
Section~\ref{EstMeliPot}. The global instability is proven in Section~\ref{sece0G0s}, using
the computation of the Melnikov potential, and is based on the computation of \emph{two different}
scattering maps, whose combination gives rise to a heteroclinic chain of periodic
orbits with increasing angular momentum and, finally to
\textb{trajectories with diffusing} angular momentum.

\section{Setting of the problem}\label{Sec:Setting}

Fix a coordinate reference system with the origin at the center of mass and call $q_\textrm{S}$ and $q_\textrm{J}$ the position of the primaries,
then under the classical assumptions regarding time units, distance and masses normalization, the motion $q$ of a massless
particle under Newton's law of universal gravitation is given by
\begin{equation}
\label{eqn:ERTBP}
 \frac{d^2 q}{dt^2}=(1-\mu)\frac{q_\textrm{S}-q}{|q_\textrm{S}-q|^3}
 +\mu\frac{q_\textrm{J}-q}{|q_\textrm{J}-q|^3}
\end{equation}
where $1-\mu$ is the mass of the particle at $q_\textrm{S}$ and $\mu$ the mass of the particle at $q_\textrm{J}$.
Introducing the conjugate momentum $p=dq/dt$ and the self-potential function % (see \cite[p.~28]{MR2468466})
\begin{equation}\label{eqn:SelfPotential}
 U_{\mu}(q,t;\EC )=\dfrac{1-\mu}{|q- q_\textrm{S}|}+ \dfrac{\mu}{|q-q_\textrm{J}|},
\end{equation}
\textb{then the }equation~\eqref{eqn:ERTBP} can be
rewritten as a 2+1/2 degree-of-freedom Hamiltonian system with time-periodic Hamiltonian
\begin{equation}\label{firstH}
H_\mu(q,p,t;\EC )=\dfrac{p^2}{2}-U_{\mu}(q,t;\EC ).
\end{equation}

In the (planar) \textb{\RPETBP}, the two primaries are assumed to be revolving around their center of mass on elliptic orbits
with eccentricity $\EC $, unaffected by the motion of the comet $q$.
In polar coordinates $q=\RP (\cos \alpha, \sin \alpha)$, the equations of motion of the primaries are
\begin{equation}\label{pri:q}
 q_\textrm{S}=\phantom{-}\mu \RO  (\cos f,\sin f) %\label{pri:q1}
\qquad
 q_\textrm{J}=-(1-\mu)\RO (\cos f,\sin f). %\label{pri:q2}.
\end{equation}
By the first Kepler's law the distance $\RO $ between the primaries~ \cite[p.~195]{Wintner41}
can be written as a function $\RO =\RO (f,\EC )$
\begin{equation}
\RO =\frac{1-\EC ^2}{1+\EC  \cos f}\label{r0f}
\end{equation}
where $f=f(t,\EC )$ is the so called \textit{true anomaly}, which satisfies~\cite[p.~203]{Wintner41}
\begin{equation}\label{f}
\frac{df}{dt}=\frac{(1+\EC \cos f)^{2}}{(1-\EC ^{2})^{3/2}}.
\end{equation}

Taking into account the expression~\eqref{pri:q} for the motion of the primaries, we can
write explicitly the denominators of the  self-potential function~\eqref{eqn:SelfPotential}
\begin{equation}\label{eqn:primaries}
\begin{split}
  |q-q_\textrm{S}|^2\  &=\RP ^2- 2\mu \RO \RP \cos (\alpha - f) +\mu^2 \RO ^2,
\\
|q- q_\textrm{J}|^2\ &= \RP ^2+2(1-\mu)  \RO \RP \cos (\alpha - f) +(1-\mu)^2 \RO ^2.
\end{split}
\end{equation}
We now perform a standard polar-canonical change of variables
$(q,p) \longmapsto (\RP ,\alpha,P_\RP , P_{\alpha})$
\[
 q=(\RP \cos \alpha,\RP \sin \alpha),
\quad
 p=\left(P_\RP \cos \alpha -\frac{P_{\alpha}}{\RP}\sin \alpha ,P_\RP \sin \alpha +\frac{P_{\alpha}}{\RP}\cos \alpha\right)
\]
to Hamiltonian \eqref{firstH}.
The equations of motion in the new coordinates are the associated to the Hamiltonian
\begin{equation}\label{hampolar}
 H_\mu^{*}(\RP ,\alpha,P_\RP ,P_{\alpha},t;\EC )=\frac{P_\RP ^{2}}{2}+\frac{P_{\alpha}^{2}}{2\RP ^{2}}-U_\mu^{*}(\RP ,\alpha,t;\EC )
\end{equation}
with a self-potential $U_\mu^*$
\[
U_\mu^{*}(\RP ,\alpha,t;\EC )=U_\mu(\RP \cos \alpha,\RP \sin \alpha,t;\EC ).
\]
From now on we will write
$$
G=P_\alpha,\qquad y=P_\RP ,
$$
so that Hamiltonian  \eqref{hampolar} becomes
\begin{equation}\label{eqn:Ham*}
H_\mu^{*}(\RP ,\alpha,y,G,t;\EC )=\frac{y^2}{2}+\frac{G^{2}}{2\RP ^{2}}-U_\mu^{*}(\RP ,\alpha,t;\EC ).
\end{equation}

\begin{remark}\label{rmk:RTBP}
In the (planar) circular case $\EC =0$ (RTBP), it is clear from equations \eqref{r0f} and \eqref{f}
that $\RO =1$ and $f=t$, and that the expressions for the distances~\eqref{eqn:primaries}
between the primaries depend on the time $t$ and the angle $\alpha$ just through
their difference $\alpha-t$.
%\kk{$\alpha-t$\\sinodic\\angle? \textb{Yes, it is a sinodic angle}}
As a consequence, $U_\mu^*(\RP ,\alpha,t;0)$, as well as $H_\mu^{*}(\RP ,\alpha,y,G,t;0)$,
depend also on  $t$ and $\alpha$ just through the same difference $\alpha-t$, called the sinodic angle.
This implies that the Jacobi constant $H^*+G$ is a first integral of the system.
\end{remark}

\subsection{McGehee coordinates}
To study the behavior of orbits near infinity, we make the McGehee~\cite{McGehee73}
non-canonical change of variables
\begin{equation}\label{x2}
\RP =\frac{2}{x^2}
\end{equation}
for $x>0$. This brings the infinity $\RP =\infty$ to the origin $x=0$ (and extends naturally to
a $b^3$-symplectic structure in the sense of \cite{Scott13,KiesenhoferMS15}; other related
examples can be found in \cite{DelshamsKM17,BraddellDMOP17}).

In these McGehee coordinates, the equations associated to
the Hamiltonian~\eqref{hampolar} become
\begin{equation}\label{Mc1:e}
\begin{aligned}
\frac{d x}{d t}&=-\frac{1}{4}x^{3}y \mspace{140mu}&\frac{d y}{d t}&=\frac{1}{8}G^{2}x^{6}-\frac{x^{3}}{4}\frac{\partial \mathcal{U}_\mu}{\partial x}\\
\frac{d \alpha}{d t}&=\phantom{-}\frac{1}{4} x^{4}G &\frac{d G}{d t}&=\frac{\partial \mathcal{U}_\mu}{\partial \alpha},
\end{aligned}
\end{equation}
where the self-potential $\mathcal{U}_\mu$ is given by
\begin{equation}\label{eq:potencial}
\mathcal{U}_\mu(x,\alpha,t;\EC )=U_\mu^{*}(2/x^2,\alpha,t;\EC )=\frac{x^2}{2}\left(\frac{1-\mu}{\sigma_S}+\frac{\mu}{\sigma_J}\right)
\end{equation}
with
\begin{align*}
 |q-q_\textrm{S}|^{2}=\sigma_\textrm{S}^{2}&=1-\mu \RO  x^{2}\cos(\alpha - f)+\frac{1}{4} \mu^{2} \RO ^{2}x^{4},\\
 |q-q_\textrm{J}|^{2}=\sigma_\textrm{J}^{2}&=1+(1-\mu) \RO  x^{2}\cos(\alpha - f)+\frac{1}{4}(1-\mu)^{2} \RO ^{2}x^{4}.
\end{align*}
It is important to notice that the true anomaly $f$ is present in these equations, so that the equation for $f$
given in \eqref{f} should be added to have the complete description of the dynamics.

\subsubsection{The symplectic structure}%\label{quasihamiltoniansection}
Under McGehee change of variables~\eqref{x2}, the canonical form $d\RP \wedge dy + d\alpha\wedge dG$ is transformed to
\begin{equation*}%\label{eqn:SymplecticForm}
\omega= -\frac{4}{x^3} dx\wedge dy + d\alpha\wedge dG
\end{equation*}
which, on $x>0$, is a (non-canonical) symplectic form.
Therefore, expressing the Hamiltonian~\eqref{eqn:Ham*} in the McGehee coordinates
\begin{equation}\label{cuasih}
 \mathcal{H}_\mu(x,\alpha,y,G,t;\EC )=\frac{y^{2}}{2}+\frac{x^{4}G^{2}}{8}-\mathcal{U}_\mu(x,\alpha,t;\EC ),
\end{equation}
the equations \eqref{Mc1:e} can be written as
\begin{equation}\label{McH:e}
\begin{aligned}
\frac{d x}{d t}&=-\frac{x^{3}}{4}\, \left(\frac{\partial \mathcal{H}_\mu}{\partial y}\right)\mspace{140mu}
&\frac{d y}{d t}&=-\frac{x^{3}}{4} \left(-\frac{\partial \mathcal{H}_\mu}{\partial x}\right) \\%\label{McH:e1}\\
\frac{d \alpha}{d t}&=\phantom{-}\frac{\partial \mathcal{H}_\mu}{\partial G}
&\frac{d G}{d t}&= -\frac{\partial \mathcal{H}_\mu}{\partial \alpha}.%\label{McH:e2}
\end{aligned}
\end{equation}

Equivalently, we can write the equations \eqref{McH:e} as $dz/dt=\{z,\mathcal{H}_\mu\}$ in terms of the Poisson bracket
\begin{equation*}
\{ f,g \}
= -\frac{x^3}{4}\Bigg(\frac{\partial f}{\partial x}\frac{\partial g}{\partial y}-\frac{\partial f}{\partial y}\frac{\partial g}{\partial x}\Bigg)
+\frac{\partial f}{\partial \alpha}\frac{\partial g}{\partial G}-\frac{\partial f}{\partial G}\frac{\partial g}{\partial \alpha}.
%\label{poisipara}
\end{equation*}

\section{Geometry of the Kepler problem ($\mu=0$)}\label{sectionmu0}
\subsection{The manifold at parabolic infinity}

For $\mu=0$ and $G> 0$, the Hamiltonian~\eqref{cuasih} becomes Duffing Hamiltonian (see Figure~\ref{levelH0}):
\begin{equation*}%\label{H0}
\mathcal{H}_{0}(x,y,G)=\frac{y^{2}}{2}+\frac{x^{4}G^{2}}{8}-\mathcal{U}\UNP (x)
=\frac{y^{2}}{2}+\frac{x^{4}G^{2}}{8}-\frac{x^{2}}{2}
\end{equation*}
and is a first integral, since the system is autonomous.
Moreover, $\mathcal{H}_{0}$ is also independent of $\EC $
and $\alpha$. Its associated equations are
\begin{equation}\label{Mc20:e}
\begin{aligned}
\frac{d x}{d t}&=-\frac{1}{4}x^{3}y\mspace{144mu}
&\frac{d y}{d t}&=\frac{1}{8}G^{2}x^{6}-\frac{1}{4}x^{4}\\% \label{Mc20:e1}\\
\frac{d \alpha}{d t}&=\phantom{-}\frac{1}{4} x^{4}G    &
\frac{d G}{d t}&=0%\label{Mc20:e2}
\end{aligned}
\end{equation}
\begin{wrapfigure}[24]{L}{45mm}
\includegraphics[width=43mm]{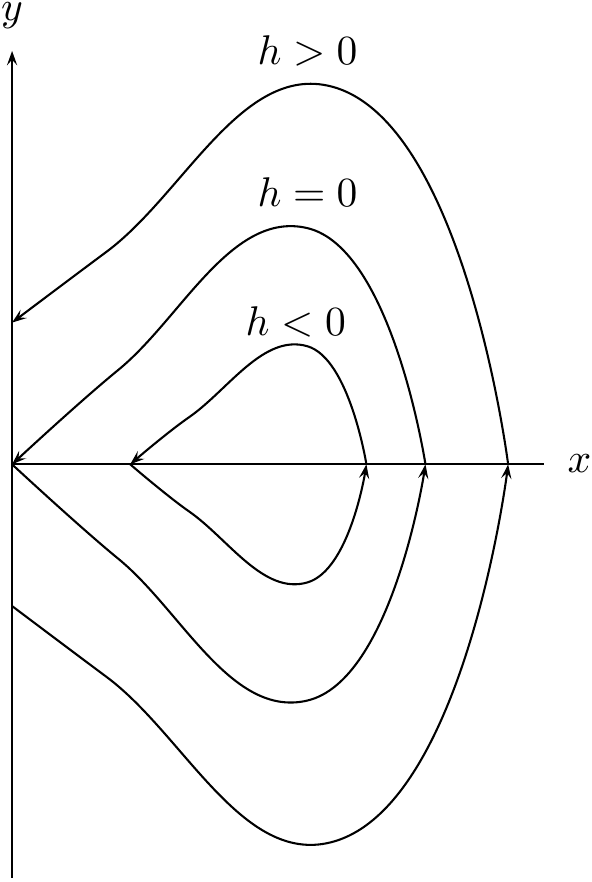}
\caption{Level curves of $\mathcal{H}\UNP $ in the $(x\geq0,y)$ plane, for fixed $G>0$}
\label{levelH0}
\end{wrapfigure}
%\kk{Figura 5D? \textb{What does it mean?}}
where it is clear that $G$ is a conserved quantity, which
will be restricted to the case $G>0$ from now on, that is, $G\in\mathbb{R}_{+}$.
The phase space, including
the invariant locus $x=0$ is given by
$(x,\alpha,y,G)\in \mathbb R_{\ge 0}\times \mathbb{T}\times \mathbb{R}\times\mathbb{R}_{+}$.
From  equations \eqref{Mc20:e} it is clear that
\begin{equation*}%\label{equi}
\mathcal{E}_\infty=\{z= (x=0,\alpha,y,G)\in \mathbb R_{\ge 0}\times \mathbb{T}\times \mathbb{R}\times\mathbb{R}_{+} \}
\end{equation*}
is the set of equilibrium points of system~\eqref{Mc20:e}.
Moreover, for any fixed  $\AO \in \mathbb{T},\GO \in\mathbb{R}$,
$$
\Lambda_{\AO ,\GO }=\{(0,\AO ,0,\GO )\}
$$
is a  parabolic equilibrium point, which is topologically equivalent to a saddle point,
since it possesses stable and unstable 1-dimensional invariant manifolds.
The union of such points is the 2-dimensional manifold of
equilibrium points
$$
\Lambda_{\infty}=\bigcup _{\AO ,\GO } \Lambda _{\AO ,\GO },
$$
which was previously denoted as $\cP_\infty$.

As we will deal with a time-periodic Hamiltonian, it is natural to work in the extended phase space
$$
\tilde z=(z,s)=(x,\alpha,y,G,s)\in \mathbb R_{\ge 0}\times \mathbb{T}\times \mathbb{R}\times\mathbb{R}_{+}\times\mathbb{T}
$$
just by writing $s$ instead of $t$ in the Hamiltonian and adding the equation
\[
\frac{d s}{d t}=1
\]
to systems \eqref{McH:e} and \eqref{Mc20:e}.
We write now the extended version of the invariant sets we have defined so far. For any
$\AO \in \mathbb{T},\GO \in\mathbb{R}$, the set
\begin{equation*}%\label{periodicorbit}
\tilde \Lambda_{\AO ,\GO }=\{\tilde z=(0,\AO ,0,\GO ,\SO ),\SO \in \mathbb{T}\}
\end{equation*}
is a $2\pi$-periodic orbit with motion determined by $ds/dt = 1$.
The union of such periodic orbits is
the $3$-dimensional invariant manifold (the \textb{parabolic} \emph{infinity manifold})
\begin{equation}\label{Lambdainfty}
\tilde\Lambda_{\infty}=\bigcup _{\AO ,\GO } \tilde\Lambda _{\AO ,\GO }=
 \{(0,\AO ,0,\GO ,\SO ),\, (\AO ,\GO ,\SO )\in \mathbb{T}\times \mathbb{R}_+\times \mathbb{T}\}
\simeq\mathbb{T} \times \mathbb{R}_+\times\mathbb{T},
\end{equation}
which is \emph{topologically equivalent to a normally hyperbolic invariant manifold} (TNHIM).

Parameterizing the points in $\tilde\Lambda_{\infty}$ by
\[
\tilde{\mathbf{x}}\UNP = \tilde{\mathbf{x}}\UNP (\AO ,\GO ,\SO )=
(\mathbf{x}\UNP (\AO ,\GO ),\SO )=(0,\AO ,0,\GO ,\SO )
\in \tilde\Lambda_{\infty}\simeq\mathbb{T} \times \mathbb{R}_+\times\mathbb{T}
\]
the inner dynamics on $\tilde\Lambda_\infty$
is trivial, since it is given by the dynamics on each
periodic orbit $\tilde \Lambda_{\AO ,\GO }$:
\begin{equation}\label{orbper}
\tilde \phi_{t,0}(\tilde{\mathbf{x}}\UNP )=(0,\AO ,0,\GO ,\SO +t)=
(\mathbf{x}\UNP (\AO ,\GO ),\SO +t)=\tilde{\mathbf{x}}\UNP (\AO ,\GO ,\SO +t),
\end{equation}
where we denote by $\tilde \phi _{t,\mu}$ the flow of system \eqref{McH:e} in the extended phase space.

\subsection{The scattering map}
In the region of the phase space with positive angular momentum $G$,
let us now look at the homoclinic orbits to the previously introduced invariant objects.

The equilibrium points $\Lambda_{\AO ,\GO }$ have stable and unstable 1-dimensional invariant manifolds
which coincide:
\begin{eqnarray*}
\gamma _{\AO ,\GO }&=&W^{\textrm{u}}(\Lambda_{\AO ,\GO })=W^{\textrm{s}}(\Lambda_{\AO ,\GO })\\
&=& \biggl\{z=(x,\hat\alpha,y,\GO ), \  \mathcal{H}\UNP (x,y,\GO )=0, \hat\alpha=\AO -\GO \int_{\mathcal{H}\UNP =0} \frac{x}{y}dx\biggl\},
\end{eqnarray*}
whereas the 2-dimensional manifold of equilibrium points $\Lambda_\infty$ has stable and unstable 3-dimensional invariant manifolds
which coincide and are given by
\[
\gamma=W^{\textrm{u}}(\Lambda_{\infty})=W^{\textrm{s}}(\Lambda_{\infty})= \{z=(x,\alpha,y,G), \  \mathcal{H}\UNP (x,y,G)=0\}.
\]
The surface
\begin{align}
\tilde\gamma _{\AO ,\GO }&=W^{\textrm{u}}(\tilde\Lambda_{\AO ,\GO })=W^{\textrm{s}}(\tilde\Lambda_{\AO ,\GO })\nonumber\\
&=\biggl\{\tilde z=(x,\hat\alpha,y,\GO ,\SO ),
 \SO \in \mathbb{T},\  \mathcal{H}\UNP (x,y,\GO )=0, \hat\alpha=\AO -\GO \int_{\mathcal{H}\UNP =0} \frac{x}{y}dx\biggl\}
\label{eqn:2Dhomoclinic}
\end{align}
is a 2-dimensional homoclinic manifold to the periodic orbit $\tilde \Lambda_{\AO ,\GO }$ in the extended phase space.
The 4-dimensional stable and unstable manifolds of the infinity manifold
$\tilde\Lambda_{\infty}$ coincide along the 4-dimensional homoclinic invariant manifold (the \emph{separatrix}), which is just
the union of the homoclinic surfaces $\tilde\gamma _{\AO ,\GO }$:
\begin{eqnarray*}
\tilde \gamma &=&W^{\textrm{u}}(\tilde\Lambda_{\infty})=W^{\textrm{s}}(\tilde\Lambda_{\infty})=\bigcup _{\AO ,\GO }\tilde\gamma _{\AO ,\GO }\notag\\
&=& \{\tilde z= (x,\alpha,y,G,s), (\alpha,G,s)\in \mathbb{T}\times \mathbb{R}_+\times \mathbb{T},\  \mathcal{H}\UNP (x,\alpha,y,G)=0\}
%\label{gammatilde}
\end{eqnarray*}

Due to the presence of the factor $-x^{3}/4$ in front of equations~\eqref{Mc20:e},
it is more convenient to parameterize the separatrix $\tilde \gamma _{\AO ,\GO }$, given in \eqref{eqn:2Dhomoclinic},
by the solutions of the Hamiltonian flow contained in $\mathcal{H}\UNP =0$ in
some time $\tau$ satisfying (see \cite{MartinezP94})
\begin{equation}\label{ttau1}
\frac{dt}{d\tau}=\frac{2G}{x^2}.
\end{equation}
In this way, the homoclinic solution to the periodic orbit $\tilde \Lambda_{\AO ,\GO }$
of system~\eqref{Mc20:e} can be written as
\begin{subequations}\label{homoclinic:h}
\begin{align}
 x_{\HC}(t;\GO )&=\frac{2}{\GO (1+\tau^{2})^{1/2}}\label{homoclinic:h1}\\
 \alpha_{\HC}(t;\AO ,\GO )&=\AO +\pi+ 2\arctan\tau\label{homoclinic:h2}\\
 y_{\HC}(t;\GO )&=\frac{2\tau}{\GO (1+\tau^{2})}\notag\\%\label{homoclinic:h3}\\
 G_{\HC}(t;\GO )&=\GO \notag\\%\label{homoclinic:h4}\\
 s_{\HC}(t;\SO )&=\SO +t,\notag%\label{homoclinic:h5}
\end{align}
\end{subequations}
where $\AO $ and $\GO $ are free parameters and the relation between $t$ and $\tau$ is
\begin{equation}\label{cambiotau}
t=\frac{\GO ^3}{2}\Bigl(\tau+\frac{\tau^3}{3}\Bigl),
\end{equation}
which is equivalent to \eqref{ttau1} on $\mathcal{H}\UNP $.
From the expressions above, we see that the convergence along the separatrix to the infinity manifold
is power-like in $\tau$ and $t$:
\begin{equation}\label{eqn:power-likeAsymp}
x_{\HC}, y_{\HC}, \frac{\AO -\alpha_{\HC}\textb{+}\pi}{\GO }
 \sim \frac{2}{\GO  \tau} \sim \frac{2}{\sqrt[3]{\textb{\pm}6t}}, \quad \tau, t \to\pm\infty.
\end{equation}

We now introduce the notation
\begin{equation}
\mathbf{\tilde z}_{0}=\mathbf{\tilde z}\UNP (\nus,\AO ,\GO ,\SO )=(\mathbf{z}\UNP (\nus,\AO ,\GO ),\SO )
=( x_{\HC}(\nus;\GO ),\alpha_{\HC}(\nus;\AO ,\GO ), y_{\HC}(\nus;\GO ), \GO , \SO )\in \tilde\gamma \label{z0tilde}
\end{equation}
so that we can parameterize any surface $\tilde \gamma _{\AO ,\GO }$ as
$$
\tilde \gamma _{\AO ,\GO }=\{\mathbf{\tilde z}_{0}=\mathbf{\tilde z}\UNP (\nus,\AO ,\GO ,\SO )=(\mathbf{z}\UNP (\nus,\AO ,\GO ),\SO ),
\  \nus \in \mathbb{R}, \SO \in \mathbb{T}\} .
$$
and we can parameterize the 4-dimensional separatrix as
\begin{equation*}%\label{paramgammatilde}
\tilde \gamma =W(\tilde\Lambda_{\infty})=\{
\mathbf{\tilde z}_{0}=\mathbf{\tilde z}\UNP (\nus,\AO ,\GO ,\SO )=(\mathbf{z}\UNP (\nus,\AO ,\GO ),\SO ),
\nus \in\mathbb{R}, \GO \in\mathbb{R}_+, (\AO ,\SO )\in\mathbb{T}^2\}.
\end{equation*}
The motion on $\tilde \gamma$ is given by
\begin{equation}\label{eqn:z0t}
\tilde \phi_{t,0}(\mathbf{\tilde z}_{0})=
\mathbf{\tilde z}\UNP (\nus+t,\AO ,\GO ,\SO +t )=
(\mathbf{z}\UNP (\nus+t,\AO ,\GO ),\SO +t)
\end{equation}
and by equations \eqref{homoclinic:h}, \eqref{cambiotau} % and \eqref{homoclinics},
the following asymptotic formula follows:
%\kk{write asymptotic}
\begin{equation}
\label{eqn:Asymptotics}
 \tilde \phi_{t,0}(\mathbf{\tilde z}_{0})-\tilde \phi_{t,0}(\mathbf{\tilde x}\UNP  )=
 (\mathbf{z}\UNP (\nus+t,\AO ,\GO ),\SO +t)-(\mathbf{x}\UNP (\AO ,\GO ),\SO +t)
 \xrightarrow[t \to \pm \infty]{} 0 .
\end{equation}

The \emph{scattering map} $\widetilde S$ describes the homoclinic orbits to the infinity manifold
$\tilde \Lambda_{\infty}$ (defined in \eqref{Lambdainfty}) to itself.
Given $\mathbf{\tilde x}_-,\mathbf{\tilde x}_+\in\tilde \Lambda_{\infty}$,
we define
$$
\widetilde S_\mu(\mathbf{\tilde x}_-):=\mathbf{\tilde x}_+
$$
if there exists
$\mathbf{\tilde z}^*\in W_{\mu}^{\textrm{u}}(\tilde \Lambda_{\infty})\cap W_{\mu}^{s}(\tilde \Lambda_{\infty})$
such that
\[
\tilde  \phi_{t,\mu}(\mathbf{\tilde z}^*)-\tilde \phi_{t,\mu}(\mathbf{\tilde x}_\pm)\to 0 \quad \text{for $t\to \pm \infty$} .
\]
In the case $\mu=0$ the asymptotic relation~\eqref{eqn:Asymptotics}
implies $\widetilde S_{0}(\mathbf{\tilde x}_{0})=\mathbf{\tilde x}_{0}$ so that that the scattering map
$\widetilde S\UNP :\tilde \Lambda_{\infty}\longrightarrow \tilde \Lambda_{\infty}$ is the identity.

\section{Invariant manifolds for the \textb{\RPETBP} ($\mu > 0$)}\label{Sec:ERTBP}

\subsection{The parabolic infinity manifold}
In order to analyse the structure of system \eqref{McH:e}, we will write $\mathcal{H}_\mu$ given in \eqref{cuasih} as
\begin{equation}\label{Hmumel}
\mathcal{H}_\mu(x,\alpha,y,G,s;\EC )=\mathcal{H}\UNP (x,y,G)-\mu\Delta \mathcal{U}_\mu(x,\alpha,s;\EC )
\end{equation}
where we have written $\mathcal{U}_\mu$ in \eqref{eq:potencial} as
\begin{equation*}%\label{calUcalUstar}
\mathcal{U}_\mu(x,\alpha,s;\EC )=\mathcal{U}\UNP (x)+\mu\Delta \mathcal{U}_\mu(x,\alpha,s;\EC )
=\frac{x^2}{2}+\mu\Delta\mathcal{U}_\mu(x,\alpha,s;\EC ),
\end{equation*}
and we proceed to study the dynamics as a perturbation of the limit case $\mu=0$. From \eqref{eq:potencial},
\begin{align}
\Delta\mathcal{U}\UNP (x,\alpha,s;\EC )&=\lim_{\mu\to 0} \Delta\mathcal{U}_\mu(x,\alpha,s;\EC )\notag\\
&=\frac{x^{2}}{\bigl[4+x^{4}\RO ^{2}+4x^{2}\RO \cos (\alpha- f)\bigl]^{1/2}}
-\Bigl(\frac{x^{2}}{2} \Bigl)^{2} \RO \cos (\alpha -f)-\frac{x^{2}}{2}\Biggl.\label{Ucal0star}
\end{align}
where
$\RO =\RO (f,\EC )$ and $f=f(s,\EC )$ are given, respectively, in (\ref{r0f}-\ref{f}).

For $\mu>0$, it is clear from equations~\eqref{McH:e} that
the set $\mathcal{E}_\infty$ remains invariant and, therefore, so does the infinity manifold
$\tilde \Lambda _{\infty}$,
being again a TNHIM, and all the periodic orbits $\tilde\Lambda_{\AO ,\GO }$ also persist.
The inner dynamics on $\tilde\Lambda_{\infty}$ is the same as in the case $\mu=0$,
so that the parametrization $\tilde{\mathbf{x}}\UNP $ as well as its trivial dynamics~\eqref{orbper} remain the same.

\subsection{The scattering map}
From \cite{McGehee73, GuardiaMSS17} we know that $W_\mu^{\textrm{s}}(\tilde\Lambda_{\infty})$ and $W_\mu^{\textrm{u}}(\tilde\Lambda_{\infty})$
exist for $\mu$ small enough and are 4-dimensional in the extended phase space.
The existence of a scattering map will depend on the
transversal intersection between these two manifolds.

Let us take an arbitrary $\mathbf{\tilde z}_{0}=(\mathbf{z}_{0},\SO )=(\mathbf{z}\UNP (\nus,\AO ,\GO ),\SO )\in \tilde\gamma$ as in \eqref{z0tilde}.
Now, we have to construct points in $W_\mu^{\textrm{s}}(\tilde \Lambda_{\infty})$ and $W_\mu^{\textrm{u}}(\tilde \Lambda_{\infty})$ to measure the distance
between them. It is clear from the definition of $\tilde \gamma$ that
$$
\mathbf{\tilde v}=(\nabla \mathcal{H}_{0}(\mathbf{z}\UNP ),0)
$$
is orthogonal to $\tilde \gamma = W^{\textrm{u}}(\tilde \Lambda_\infty)=W^{\textrm{s}}(\tilde \Lambda_\infty)$
at $\mathbf{\tilde z}_{0}$ and then if the normal bundle to $\tilde \gamma$ is denoted by
$$
N(\mathbf{\tilde z}\UNP )=\left\{\mathbf{\tilde z}\UNP +\lambda\, \mathbf{\tilde v}, \lambda\in \mathbb{R}\right\}
$$
we have that, if $\mu$ is small enough,  there exist unique points
$\mathbf{\tilde z}^{\textrm{s},\textrm{u}}_{\mu}=( z^{\textrm{s},\textrm{u}}_{\mu},\SO )$ such that
\begin{equation}\label{zmusu}
\{\mathbf{\tilde z}^{\textrm{s},\textrm{u}}_{\mu}\}
=W^{\textrm{s},\textrm{u}}_{\mu}(\tilde\Lambda_{\infty})\,\cap N(\mathbf{\tilde z}\UNP ) .
\end{equation}
The distance we want to compute between
$W_\mu^{\textrm{s}}(\tilde \Lambda_{\infty})$ and $W_\mu^{\textrm{u}}(\tilde \Lambda_{\infty})$
is the signed magnitude  given by
\begin{equation*}%\label{dist}
 d(\mathbf{\tilde z}\UNP ,\mu)=
 \mathcal{H}\UNP (\mathbf{\tilde z}^{\textrm{u}}_\mu)-\mathcal{H}\UNP (\mathbf{\tilde z}^{s}_\mu).
\end{equation*}
We now introduce the \emph{Melnikov potential} (see \cite{DelshamsG00,DelshamsLS06})
\begin{equation}\label{calL}
 \mathcal{L}(\AO ,\GO ,\SO ;\EC )=
\int_{-\infty}^{\infty}\Delta\mathcal{U}\UNP (x_{\HC}(t;\GO ),\alpha_{\HC}(t;\AO ,\GO ),\SO +t;\EC )\, dt,
\end{equation}
where $\Delta\mathcal{U}\UNP $ is defined in \eqref{Ucal0star}.
Thanks to the asymptotic behavior~\eqref{eqn:power-likeAsymp}
of the solutions along the separatrix and of the self potential close to the \textb{parabolic} infinity manifold
$$
\Delta\mathcal{U}\UNP (x,\alpha,s;\EC ) =O(x^4) \quad \text{as \ $x\to 0$}
$$
this integral is absolutely convergent, and will be computed in detail in Section~\ref{EstMeliPot}.
\begin{proposition}\label{theprop}
 Given $(\AO ,\GO ,\SO )\in \mathbb{T}\times\mathbb{R}^+\times\mathbb{T}$, assume that the function
\begin{equation}\label{appLcal}
\nus \in \mathbb{R}\longmapsto \mathcal{L}(\AO ,\GO ,\SO -\nus;\EC )\in\mathbb{R}
\end{equation}
has a non-degenerate critical point $\nus^*=\nus^*(\AO ,\GO ,\SO ;\EC )$. Then, there exists $\mu^*=\mu^*(\GO ,\EC)$, such that for $0<\mu<\mu^*$,
close to the point
$\mathbf{\tilde z}\UNP ^{*}= (\mathbf{z}\UNP (\nus^{*},\AO ,\GO ),\SO )\in \tilde\gamma$
(see the parameterization in~\eqref{z0tilde}), there exists a
locally unique point
\begin{equation*}
\mathbf{\tilde z}^{*}=\mathbf{\tilde z}^*(\nus^*,\AO ,\GO ,\SO ;\EC ,\mu)
\in W_{\mu}^{s}(\tilde \Lambda_\infty)\pitchfork W_{\mu}^{\textrm{u}}(\tilde \Lambda_\infty)
\pitchfork N(\mathbf{\tilde z}\UNP ^{*})
\end{equation*}
of the form
$$
\mathbf{\tilde z}^{*}=\mathbf{\tilde z}\UNP ^{*}+O(\mu).
$$
Also, there exist unique points
$\mathbf{\tilde x}_\pm=(0,\alpha_\pm,0,G_\pm,\SO )=(0,\AO ,0,\GO ,\SO )+O(\mu)\in\tilde\Lambda_{\infty}$ such that
\begin{equation*}
\tilde  \phi_{t,\mu}(\mathbf{\tilde z}^{*})-\tilde \phi_{t,\mu}(\mathbf{\tilde x}_\pm) \longrightarrow 0\quad \text{for $t\to\pm\infty$}.
\end{equation*}
Moreover, we have
\begin{equation}\label{G+G-}
G_+-G_-=\mu \frac{\partial \mathcal{L}}{\partial \AO }(\AO ,\GO ,\SO -\nus^*(\AO ,\GO ,\SO ;\EC ))+O(\mu^2).
\end{equation}

\end{proposition}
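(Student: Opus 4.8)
The plan is to apply the standard Melnikov theory for scattering maps associated to a TNHIM, as developed in \cite{DelshamsLS06,DelshamsG00}, adapting it to the present setting where the invariant manifold is only topologically hyperbolic and the flow carries the degenerate factor $-x^3/4$. First I would establish the existence and smoothness in $\mu$ of the points $\mathbf{\tilde z}^{\textrm{s},\textrm{u}}_\mu$ defined in \eqref{zmusu}: since $W_\mu^{\textrm{s},\textrm{u}}(\tilde\Lambda_\infty)$ are $C^1$-close to $\tilde\gamma$ for small $\mu$ (by \cite{McGehee73}), and $N(\mathbf{\tilde z}_0)$ is transverse to $\tilde\gamma$ at $\mathbf{\tilde z}_0^*$, the implicit function theorem yields locally unique intersection points depending smoothly on $\mu$ and on the parameters $(\nus,\AO,\GO,\SO,\EC)$, with $\mathbf{\tilde z}^{\textrm{s},\textrm{u}}_\mu = \mathbf{\tilde z}_0 + O(\mu)$.

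Next I would compute the splitting distance $d(\mathbf{\tilde z}_0,\mu)$ from \eqref{dist} via the fundamental-theorem-of-calculus trick: write $\mathcal{H}_0(\mathbf{\tilde z}^{\textrm{u}}_\mu)-\mathcal{H}_0(\mathbf{\tilde z}^{\textrm{s}}_\mu)$ as the integral over the orbit of $\frac{d}{dt}\mathcal{H}_0(\phi_{t,\mu}(\cdot))$ from $-\infty$ to $+\infty$, using that $\mathcal{H}_0\to 0$ on $\tilde\Lambda_\infty$ at both ends by \eqref{eqn:Asymptotics}. Since $\dot{\mathcal{H}}_0 = \{\mathcal{H}_0,\mathcal{H}_\mu\} = -\mu\{\mathcal{H}_0,\Delta\mathcal{U}_\mu\}$ and the Poisson bracket \eqref{poisipara} contributes the factor $-x^3/4$ in the $(x,y)$ block while the $(\alpha,G)$ block is canonical, one checks that along the unperturbed separatrix the leading term is $-\mu\,\partial_t[\Delta\mathcal{U}_0(x_0(t),\alpha_0(t),\SO+t)]$-type expression whose integral collapses — after the change of time and integration by parts — to $\mu\,\partial_{\SO}\mathcal{L}(\AO,\GO,\SO;\EC)$ evaluated at the appropriate shifted argument, plus $O(\mu^2)$. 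Thus $d(\mathbf{\tilde z}_0,\mu)=-\mu\,\partial_{\nus}\mathcal{L}(\AO,\GO,\SO-\nus;\EC)+O(\mu^2)$, and a non-degenerate critical point $\nus^*$ of \eqref{appLcal} produces, again by the implicit function theorem, a non-degenerate (hence transverse, by the general position argument of \cite{DelshamsLS06}) zero of $d$, i.e.\ a locally unique homoclinic point $\mathbf{\tilde z}^*=\mathbf{\tilde z}_0^*+O(\mu)\in W_\mu^{\textrm{s}}\pitchfork W_\mu^{\textrm{u}}$.

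Then I would identify the asymptotic endpoints $\mathbf{\tilde x}_\pm\in\tilde\Lambda_\infty$ of the orbit through $\mathbf{\tilde z}^*$. Because the inner dynamics on $\tilde\Lambda_\infty$ is trivial ($\dot\alpha=\dot G=0$, $\dot s=1$), the limits $\lim_{t\to\pm\infty}\phi_{t,\mu}(\mathbf{\tilde z}^*)$ single out well-defined values $(\alpha_\pm,G_\pm)=(\AO,\GO)+O(\mu)$. To get the first-order formula \eqref{G+G-}, I would compute the variation of $G$ along the homoclinic orbit: $\dot G = -\partial_\alpha\mathcal{H}_\mu = \mu\,\partial_\alpha\Delta\mathcal{U}_\mu$, so $G_+-G_- = \mu\int_{-\infty}^{\infty}\partial_\alpha\Delta\mathcal{U}_0(x_0(t),\alpha_0(t),\SO+t;\EC)\,dt + O(\mu^2)$, and this integral is exactly $\partial_{\AO}\mathcal{L}(\AO,\GO,\SO;\EC)$ since $\partial_{\AO}\alpha_{\HC}=1$ in \eqref{homocilinic:h2}; evaluating at the homoclinic point means replacing $\SO$ by $\SO-\nus^*$, which gives \eqref{G+G-}. (Alternatively this follows from the general identity $S_\mu = \mathrm{id} + O(\mu)$ with first-order term given by the derivatives of the Melnikov potential, as in \cite{DelshamsLS06}.)

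The main obstacle is making the Melnikov/energy-difference computation rigorous despite the degeneracy of the symplectic form \eqref{eqn:SymplecticForm} at $x=0$ and the merely power-like (not exponential) convergence \eqref{eqn:power-likeAsymp} along the separatrix: one must verify that the $O(\mu^2)$ error terms in $d(\mathbf{\tilde z}_0,\mu)$ and in $G_+-G_-$ are genuinely uniformly bounded, which requires controlling the perturbed manifolds and the flow on a sufficiently long time interval and checking that $\Delta\mathcal{U}_0 = O(x^4)$ together with the $x\sim 2/\sqrt[3]{6t}$ decay makes all the relevant integrals converge with room to spare. This is the technical heart; the geometric part (IFT, transversality, reading off endpoints) is routine once the estimates are in place.
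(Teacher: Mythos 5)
Your proposal is correct and follows essentially the same route as the paper: the energy-difference $d=\mathcal{H}_0(\mathbf{\tilde z}^{\mathrm{u}}_\mu)-\mathcal{H}_0(\mathbf{\tilde z}^{\mathrm{s}}_\mu)$ computed via $\frac{d}{dt}\mathcal{H}_0=\{\mathcal{H}_0,\mathcal{H}_\mu\}=-\mu\{\mathcal{H}_0,\Delta\mathcal{U}_\mu\}$ and the vanishing of $\mathcal{H}_0$ on $\tilde\Lambda_\infty$, identification of the first-order term with $\mu\,\partial_{\SO}\mathcal{L}(\AO,\GO,\SO-\nus;\EC)$, the implicit function theorem at the non-degenerate critical point, and the fundamental-theorem-of-calculus computation $G_+-G_-=-\int\partial_\alpha\mathcal{H}_\mu\,dt=\mu\,\partial_{\AO}\mathcal{L}(\AO,\GO,\SO-\nus^*;\EC)+O(\mu^2)$. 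The only difference is emphasis: you flag the uniformity of the $O(\mu^2)$ remainders under the power-like convergence as the technical heart, whereas the paper absorbs this into the cited estimate $\phi_{t,\mu}(\mathbf{\tilde z}^{\mathrm{s},\mathrm{u}}_\mu)-\phi_{t,0}(\mathbf{\tilde z}_0)=O(\mu)$ for $\pm t\geq 0$ from McGehee's work.
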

\begin{proof}
 From the equation \eqref{z0tilde} we know that any point $ \mathbf{\tilde z}\UNP  \in \tilde \gamma$ has the form
$$
 \mathbf{\tilde z}\UNP =\mathbf{\tilde z}\UNP (\nus,\AO ,\GO ,\SO ).
$$
As in \eqref{zmusu}, we consider
\begin{equation*}
\mathbf{\tilde z}^{\textrm{s},\textrm{u}}_{\mu}=
(\mathbf{z}^{\textrm{s},\textrm{u}}_{\mu},\SO )\in
W^{\textrm{s},\textrm{u}}_{\mu}(\tilde\Lambda_{\infty})\,\cap N(\mathbf{\tilde z}\UNP ),
\end{equation*}
and we  look for
$\mathbf{\tilde z}\UNP  $ such that $\mathbf{\tilde z}^{\textrm{s}}_{\mu}=\mathbf{\tilde z}^{\textrm{u}}_{\mu}$.
There must exist points $ \mathbf{\tilde x}_\pm=(\mathbf{x}_\pm,\SO )\in \tilde \Lambda_\infty$ such that
\begin{equation}\label{phisu}
\tilde  \phi_{t,\mu}(\mathbf{\tilde z}^{\textrm{s},\textrm{u}}_\mu)-
 \tilde \phi_{t,\mu}(\mathbf{\tilde x}_\pm) \xrightarrow[t \to \pm \infty]{} 0,
\end{equation}
moreover
$\tilde \phi_{t,\mu}(\mathbf{\tilde z}^{\textrm{s},\textrm{u}}_\mu)-\tilde \phi_{t,0}(\mathbf{\tilde z}\UNP )=O(\mu)$
for $\pm t \geq 0$ (see \cite{McGehee73, GuardiaMSS17}).
Since $\mathcal{H}\UNP $ does not depend on time, by~\eqref{Hmumel}
and the chain rule we have that
$$
 \frac{d}{d t}\mathcal{H}\UNP (\tilde \phi_{t,\mu}(\mathbf{\tilde z}^{\textrm{s},\textrm{u}}_\mu))=
\{ \mathcal{H}\UNP ,\mathcal{H}_\mu \}(\tilde \phi_{t,\mu}(\mathbf{\tilde z}^{\textrm{s},\textrm{u}}_\mu);\EC)
=-\mu\{ \mathcal{H}\UNP ,\Delta\mathcal{U}_\mu\}(\tilde \phi_{t,\mu}(\mathbf{\tilde z}^{\textrm{s},\textrm{u}}_\mu);\EC ).
$$
Since $\mathcal{H}\UNP =0$ in $\tilde \Lambda_\infty$, using \eqref{phisu} and the trivial dynamics on
$\tilde \Lambda_\infty$ we obtain
$$
\mathcal{H}\UNP ( \mathbf{\tilde z}^{\textrm{s},\textrm{u}}_\mu)=
-\mu \int _{0}^{\pm \infty}\{ \mathcal{H}\UNP ,\Delta\mathcal{U}_\mu \}
(\tilde \phi_{t,\mu}(\mathbf{\tilde z}^{\textrm{s},\textrm{u}}_\mu);\EC )\,dt.
$$
Taylor expanding in $\mu$ and using the notation~\eqref{eqn:z0t}
\begin{align}
 \mathcal{H}\UNP (\mathbf{\tilde z}^{\textrm{u}}_\mu)-\mathcal{H}\UNP (\mathbf{\tilde z}^{\textrm{s}}_\mu)
&=\mu \int_{-\infty}^{\infty}\{ \mathcal{H}\UNP ,\Delta\mathcal{U}\UNP  \}(\tilde \phi_{t,0}(\mathbf{\tilde z}\UNP );\EC )\, dt
+O(\mu^2)\notag \\
&=\mu \int_{-\infty}^{\infty}\{ \mathcal{H}\UNP ,\Delta\mathcal{U}\UNP  \}(\mathbf{z}\UNP (\nus+t,\AO ,\GO ),\SO +t;\EC )\,
dt+O(\mu^2).
\label{eqn:H0us}
\end{align}
On the other hand, from \eqref{calL}
$$
\mathcal{L}(\AO ,\GO ,\SO ;\EC )
=\int_{-\infty}^{\infty}\Delta\mathcal{U}\UNP (x_{\HC}(\nu -\SO ;\GO ),\alpha_{\HC}(\nu -\SO ;\AO ,\GO ),\nu ;\EC )\, d\nu
$$
and then
\[
\frac{\partial \mathcal{L}}{\partial \SO }(\AO ,\GO ,\SO ;\EC )
=-\int_{-\infty}^{\infty}\{ \Delta\mathcal{U}\UNP ,\mathcal{H}\UNP  \}
(\mathbf{z}\UNP (\nu -\SO ,\AO ,\GO ),\nu ;\EC )\,d\nu
\]
so that
\begin{align}
\frac{\partial \mathcal{L}}{\partial \SO }(\AO ,\GO ,\SO -\nus;\EC )&=
\int_{-\infty}^{\infty}\{ \mathcal{H}\UNP ,\Delta\mathcal{U}\UNP  \}
(\mathbf{z}\UNP (\nu -\SO +\nus,\AO ,\GO ),\nu ;\EC )\,d\nu \notag\\
&=\int_{-\infty}^{\infty}\{ \mathcal{H}\UNP ,\Delta\mathcal{U}\UNP  \}
(\mathbf{z}\UNP (t+\nus,\AO ,\GO ),\SO +t;\EC )\,dt\label{eqn:DsL}
\end{align}
and therefore, from~\eqref{eqn:H0us} and~\eqref{eqn:DsL}
\begin{equation}\label{eq:dist}
d(\mathbf{\tilde z}\UNP ,\mu)=\mathcal{H}\UNP (\mathbf{\tilde z}^{\textrm{u}}_\mu)-
\mathcal{H}\UNP (\mathbf{\tilde z}^{\textrm{s}}_\mu)=
\mu\frac{\partial \mathcal{L}}{\partial \SO }(\AO ,\GO ,\SO -\nus;\EC )+O(\mu^2).
\end{equation}

For \textb{a non-zero} small enough $\mu$, it is clear by the Implicit Function Theorem
that a non degenerate critical value $\nus^*$ of the function~\eqref{appLcal}
gives rise to a homoclinic point $\mathbf{\tilde z}^{*}$ to $\tilde \Lambda _\infty$
where the manifolds $W_\mu^{\textrm{s}}(\tilde\Lambda_{\infty})$
and $W_\mu^{\textrm{u}}(\tilde\Lambda_{\infty})$ intersect transversally
and has the desired form $\mathbf{\tilde z}^{*}=\mathbf{\tilde z}\UNP ^{*}+O(\mu)$.

Consider now the solution of system \eqref{McH:e} in the extended phase space represented by $\tilde \phi_{t,\mu}(\mathbf{\tilde z}^{*})$.
By the Fundamental Theorem of Calculus and~\eqref{Hmumel} we have
\begin{align*}
G_+-G_-&=
-\int_{-\infty}^{\infty} \frac{\partial \mathcal{H}_\mu}{\partial \alpha}(\tilde \phi_{t,\mu}(\mathbf{\tilde z}^{*}))\, dt
=\mu
\int_{-\infty}^{\infty} \frac{\partial \Delta\mathcal{U}_\mu}{\partial \alpha}(\tilde \phi_{t,\mu}(\mathbf{\tilde z}^{*});\EC )\, dt\\
&=\mu\int_{-\infty}^{\infty} \frac{\partial \Delta\mathcal{U}\UNP }{\partial \alpha}(\tilde \phi_{t,0}(\mathbf{\tilde z}\UNP ^{*});\EC )\, dt+O(\mu^2)\\
&=\mu\int_{-\infty}^{\infty} \frac{\partial \Delta\mathcal{U}\UNP }{\partial \alpha}(\mathbf{z}\UNP (\nus^*+t,\AO ,\GO ),\SO +t;\EC )\, dt+O(\mu^2)\\
&=\mu\frac{\partial \mathcal{L}}{\partial \alpha}(\AO ,\GO ,\SO -\nus^*;\EC )+O(\mu^2).
\end{align*}
\end{proof}
\begin{remark}\label{rem:dist}
From \eqref{eq:dist} it is clear that, to apply the implicit function Theorem, we need that $\mu \ll \mu^*$, where
\[
\mu ^*= O\left(\frac{\partial^2}{\partial \SO^2 }\left(\mathcal{L}\left(\AO ,\GO ,\SO -\nus^*(\AO ,\GO ,\SO ;\EC );\EC \right)\right)\right).
\]
We will give more precise information about $\mu ^*$ after the computation of the Melnikov function given in Theorem \ref{thepropositionmain}, where we will
see that it is exponentially small for large $G$.
 \end{remark}

Once we have found a critical point $\nus^*=\nus^*(\AO ,\GO ,\SO ;\EC )$ of \eqref{appLcal}
on a domain of $(\AO ,\GO ,\SO )$, we can define the \emph{reduced Poincar\'e function} (see \cite{DelshamsLS06})
\begin{equation}\label{Lcalstar}
\mathcal{L}^*(\AO ,\GO ;\EC ):=\mathcal{L}(\AO ,\GO ,\SO -\nus^*;\EC )=\mathcal{L}(\AO ,\GO ,\SO ^*;\EC )
\end{equation}
with $\SO ^*=\SO -\nus^*$.
Note that the reduced Poincar\'e function does not depend on the $\SO $ chosen, since by Proposition~\ref{theprop}
\[
\frac{\partial}{\partial \SO }\left(\mathcal{L}\left(\AO ,\GO ,\SO -\nus^*(\AO ,\GO ,\SO ;\EC );\EC \right)\right)\equiv 0.
\]
Note also that if the function \eqref{appLcal} in Proposition \ref{theprop} has different non degenerate critical points there will exist different scattering maps.

The next Proposition gives an approximation of the scattering map in the general case $\mu> 0$.

\begin{proposition}\label{scatteringlemma}
The associated scattering map $(\alpha_+,G_+,s_+)=\widetilde S_{\mu}(\alpha,G,s)$
for any non degenerate critical point $\nus^*$
of the function defined in \eqref{appLcal} is given by
$$
(\alpha,G,s)\longmapsto \Bigl(\alpha -\mu \frac{\partial \mathcal{L}^*}{\partial G}(\alpha,G;\EC )+O(\mu^2),G
 +\mu \frac{\partial\mathcal L^*}{\partial \alpha}(\alpha,G;\EC )+O(\mu^2), s\Bigl)
$$
where $\mathcal{L}^*$ is the Poincar\'e reduced function introduced in \eqref{Lcalstar}.
\end{proposition}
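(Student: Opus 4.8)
The plan is to derive the scattering map from the general theory of scattering maps for normally hyperbolic (here TNHIM) invariant manifolds, specializing the perturbative formula to our setting where the unperturbed scattering map is the identity. Recall from the standard references (\cite{DelshamsLS06}) that when the inner and outer dynamics are $\mu$-close to the unperturbed ones, the scattering map $S_\mu$ admits an expansion $S_\mu = \mathrm{Id} + \mu J^{-1}\nabla \mathcal{L}^* + O(\mu^2)$, where $J$ is the matrix of the symplectic form restricted to $\tilde\Lambda_\infty$, $\mathcal{L}^*$ is the reduced Poincaré function built from a non-degenerate critical point $\nus^*$ of the map \eqref{appLcal}, and the gradient is taken in the variables parameterizing $\tilde\Lambda_\infty$. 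So the first step is to identify the symplectic structure on $\tilde\Lambda_\infty$: from \eqref{eqn:SymplecticForm} the form $\omega$ restricted to $x=0$ is simply $d\alpha\wedge dG$, so $(\alpha,G)$ are conjugate symplectic coordinates and $s$ is the (trivial) time variable. Hence the Hamiltonian vector field generated by a function $F(\alpha,G)$ on $\tilde\Lambda_\infty$ is $(\partial F/\partial G, -\partial F/\partial \alpha)$ in the $(\alpha,G)$ components, with the $s$-component unchanged.

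Next I would assemble the three components. For the $G$-component, Proposition~\ref{theprop} already gives exactly $G_+ - G_- = \mu\,\partial_\alpha\mathcal{L}(\alpha,G,s-\nus^*) + O(\mu^2)$, and by definition \eqref{Lcalstar} of $\mathcal{L}^*$ and the identity $\partial_{\SO}(\mathcal{L}(\alpha,G,\SO-\nus^*(\alpha,G,\SO)))\equiv 0$ noted after that proposition, one has $\partial_\alpha \mathcal{L}^*(\alpha,G) = \partial_\alpha\mathcal{L}(\alpha,G,s-\nus^*)$ (the implicit $\nus^*$-dependence contributes nothing because $\nus^*$ is a critical point of $\mathcal{L}$ in its third slot and, by the chain rule together with that vanishing identity, the extra terms cancel). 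That yields the stated $G$-component. For the $\alpha$-component I would run the analogous computation that was done for $G$ in the proof of Proposition~\ref{theprop}: write $\alpha_+ - \alpha_- = \int_{-\infty}^{\infty} \tfrac{d}{dt}\alpha(\phi_{t,\mu}(\mathbf{\tilde z}^*))\,dt$ minus the corresponding integral along $\tilde\Lambda_\infty$ (which must be subtracted because, unlike $G$, the $\alpha$-coordinate along the periodic orbit is not constant — it drifts), use $d\alpha/dt = \partial_G\mathcal{H}_\mu = \partial_G\mathcal{H}_0 - \mu\,\partial_G\Delta\mathcal{U}_\mu$, observe that the $\mathcal{H}_0$ part cancels against the unperturbed motion since the homoclinic $\alpha_0(t)$ and the limit values $\alpha_\pm$ differ only by the power-like-decaying tail \eqref{eqn:power-likeAsymp} whose contribution telescopes, and Taylor expand to get $\alpha_+ - \alpha_- = -\mu\int_{-\infty}^\infty \partial_G\Delta\mathcal{U}_0(\mathbf{z}_0(\nus^*+t,\alpha,G),s+t)\,dt + O(\mu^2) = -\mu\,\partial_G\mathcal{L}(\alpha,G,s-\nus^*) + O(\mu^2)$, which equals $-\mu\,\partial_G\mathcal{L}^*(\alpha,G) + O(\mu^2)$ by the same reduced-function argument. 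The $s$-component is immediate: $s$ is preserved by the scattering map to all orders because the homoclinic point $\mathbf{\tilde z}^*$ was constructed (Proposition~\ref{theprop}) with the same $s$-coordinate $\SO$ as both asymptotic points $\mathbf{\tilde x}_\pm$, reflecting that time is not shifted.

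Putting the three components together gives precisely
$$
S_\mu(\alpha,G,s) = \Bigl(\alpha - \mu\,\tfrac{\partial\mathcal{L}^*}{\partial G}(\alpha,G;\EC) + O(\mu^2),\; G + \mu\,\tfrac{\partial\mathcal{L}^*}{\partial\alpha}(\alpha,G;\EC) + O(\mu^2),\; s\Bigr),
$$
which is the asserted formula, consistent with $S_0 = \mathrm{Id}$ and with the fact that $S_\mu$ is symplectic (it is, to leading order, the time-$\mu$ flow of the Hamiltonian $\mathcal{L}^*$ on $(\tilde\Lambda_\infty,d\alpha\wedge dG)$).

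The main obstacle, and the step deserving the most care, is the $\alpha$-component: one must justify rigorously that the difference of the two (individually non-absolutely-convergent) integrals $\int \partial_G\mathcal{H}_0$ along the perturbed homoclinic and along $\tilde\Lambda_\infty$ is well-defined and reduces, at order $\mu$, to $-\mu\,\partial_G\mathcal{L}$. This requires the uniform $O(\mu)$ closeness $\phi_{t,\mu}(\mathbf{\tilde z}^*) - \phi_{t,0}(\mathbf{\tilde z}_0^*) = O(\mu)$ for $\pm t \geq 0$ from \cite{McGehee73}, the power-like asymptotics \eqref{eqn:power-likeAsymp} ensuring the relevant tails are integrable, and the $O(x^4)$ decay of $\Delta\mathcal{U}_0$ guaranteeing absolute convergence of the resulting Melnikov-type integral; these are exactly the ingredients already used for the $G$-component in Proposition~\ref{theprop}, so the argument is structurally parallel but needs the extra subtraction of the (non-constant) inner $\alpha$-drift handled explicitly. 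An alternative, cleaner route is to invoke the general scattering-map formula of \cite{DelshamsLS06} wholesale once the symplectic normal form $d\alpha\wedge dG$ on $\tilde\Lambda_\infty$ and the identity $S_0 = \mathrm{Id}$ are in hand; I would present the direct computation for $G$ (already done) and $\alpha$ as above, and cite \cite{DelshamsLS06} for the symplecticity and the general shape of the expansion.
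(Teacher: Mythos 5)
Your treatment of the $G$-component and the $s$-component matches the paper, but the route you propose for the $\alpha$-component contains a genuine error. You claim that $\alpha_+-\alpha_-$ can be obtained by the "structurally parallel" computation $d\alpha/dt=\partial_G\mathcal{H}_\mu=\partial_G\mathcal{H}_0-\mu\,\partial_G\Delta\mathcal{U}_\mu$, Taylor expanding to get $-\mu\int\partial_G\Delta\mathcal{U}_0\,dt$. But $\Delta\mathcal{U}_\mu$ (see \eqref{eq:potencial} and \eqref{Ucal0star}) is a function of $(x,\alpha,s)$ only and does not depend on $G$, so $\partial_G\Delta\mathcal{U}_\mu\equiv 0$ and your expansion yields nothing at order $\mu$. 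The $G$-computation in Proposition~\ref{theprop} works precisely because the perturbation enters the $\dot G$-equation \emph{explicitly} at order $\mu$ (namely $\dot G=\mu\,\partial_\alpha\Delta\mathcal{U}_\mu$), so one may evaluate along the unperturbed homoclinic. For $\alpha$ the equation $\dot\alpha=\tfrac14 x^4G$ has no explicit $\mu$-dependence: the $O(\mu)$ contribution to $\alpha_+-\alpha_-$ comes entirely from the $O(\mu)$ deformation of the trajectory fed through $D(\tfrac14x^4G)$, i.e.\ it requires controlling the solution of the variational equations along the separatrix. Note also that the $G$-dependence of $\mathcal{L}$ itself comes only through the $G$-dependence of the parameterization $(x_{\HC},\alpha_{\HC})$ of the homoclinic, not through any explicit $G$-derivative of $\Delta\mathcal{U}_0$, which is consistent with the fact that your proposed integral cannot be the right object. (A smaller slip: on $\tilde\Lambda_\infty$ one has $x=0$, so $\dot\alpha=0$ and $\alpha$ does \emph{not} drift along the periodic orbits; the inner dynamics is trivial except in $s$.)

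The paper avoids this entirely: it obtains the $\alpha$-component as a consequence of the exact symplecticity of the scattering map, citing \cite{DelshamsLS08}, and the only new work is to verify that the symplecticity argument survives the passage from a NHIM to a TNHIM, namely that $\mathrm{Area}(\phi_{t,\mu}(\mathcal{R}))\to 0$ for $2$-cells in $W^{\textrm{s}}_\mu(\tilde\Lambda_\infty)$, which follows from the power-like contraction \eqref{eqn:power-likeAsymp}. You do mention symplecticity as a fallback, which is the correct idea, but you present it as optional and omit precisely this TNHIM adaptation, which is the one nontrivial point the proof must address. To repair your argument, either drop the direct computation for $\alpha$ and make the symplecticity route (with the area-contraction check) the main argument, or carry out the variational-equation analysis honestly.
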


\begin{proof}
By hypothesis we have a non degenerate critical point $\nus^*$ of \eqref{appLcal}.
By definition~\eqref{Lcalstar}, Proposition~\ref{theprop}
gives
$$
G_+ - G = \mu \frac{\partial \mathcal{L}^*}{\partial \alpha}(\AO ,\GO )+O(\mu^2).
$$
as well as $G_-=\GO +O(\mu)$ to get the correspondence between $G_+$ and $G_-$ that were
looking for.

The companion equation to~\eqref{G+G-}
$$
\alpha_+ - \alpha = - \mu \frac{\partial \mathcal{L}^*}{\partial G}(\AO ,\GO )+O(\mu^2)
$$
follows from the fact that the scattering map is of the form $\widetilde S_{\mu}(\alpha,G,s)= (S_{\mu}(\alpha,G,s), s)$ and,
for each fixed $s\in \mathbb{T}$,  $S_{\mu}$ is symplectic.

Indeed, this is a standard result for a scattering map associated
to a NHIM, and is proven in \cite[Theorem 8]{DelshamsLS08}. For what concerns our scattering map
defined on a TNHIM, the only difference is that the stable contraction (expansion) along
$W^{\textrm{s},\textrm{u}}_\mu(\tilde{\Lambda}_\infty)$ is power-like~\eqref{eqn:power-likeAsymp}
instead of exponential with respect to time.
Therefore, we only have to check that Proposition~10 in\cite{DelshamsLS08}~still holds,
namely that
$\text{Area}\left(\phi_{t,\mu}(\mathcal{R})\right) \rightarrow0$ when $t\rightarrow 0$
for every 2-cell $\mathcal{R}$ in
$W^{\textrm{s}}_\mu(\tilde{\Lambda}_\infty)$ parameterized by $R:[0,1] \times [0,1]\to W^{\textrm{s}}_\mu(\tilde{\Lambda}_\infty)$
in such a way that $R(t_1,t_2) \in W^{\textrm{s}}_\mu(\tilde{\Lambda}_\infty)$, $R(0,t_2) \in\tilde{\Lambda}_\infty$.
But this is a direct consequence of the fact that the stable coordinates contract
at least by $C/\sqrt[3]{t}$ (see~\eqref{eqn:power-likeAsymp}) and the
coordinates along $\Lambda_\infty$ do not expand at all.
\end{proof}

\begin{remark}%\label{Ganulada}
In the (planar) circular case $\EC =0$ (RTBP), $\Delta\mathcal{U}_\mu (x,\AO ,s;\EC )$
depends on the time $s$ and the angle $\alpha$ just through their difference $\alpha-s$, see Remark~\ref{rmk:RTBP}.
From
\[
\frac{\partial\Delta\mathcal{U}_\mu }{\partial \alpha}(x,\alpha,s;0)=-\frac{\partial\Delta\mathcal{U}_\mu}{\partial s}(x,\alpha,s;0)
\]
one readily obtains
\[
\frac{\partial \mathcal{L}}{\partial \SO }(\AO ,\GO ,\SO ;0)=
-\frac{\partial \mathcal{L}}{\partial \AO }(\AO ,\GO ,\SO ;0)
\]
and, therefore,
\[
\frac{\partial \mathcal{L}}{\partial \AO }(\AO ,\GO ,\SO -\nus^*;\EC )=
-\frac{\partial \mathcal{L}}{\partial \SO }(\AO ,\GO ,\SO -\nus^*;0)=0
\]
and consequently the reduced Poincar\'e function $\mathcal{L}^*$ does not depend on $\AO $,
and $G_+=G_-+O(\mu^2)$.

But indeed $G_+\equiv G_-$ in the circular case, since there exists the first integral
provided by the Jacobi constant $C_J=\mathcal{H}_\mu+G$ and as $\mathcal{H}_\mu=0$
on $\tilde \Lambda_\infty$, $G_+=G_-$. Therefore, in the circular case there is no possibility
to find diffusive orbits studying the intersection of $W_{\mu}^{\textrm{s}}(\tilde \Lambda_\infty)$
and $W_{\mu}^{\textrm{u}}(\tilde \Lambda_\infty)$ since any scattering map preserves
the angular momentum.
\end{remark}

\section{Global diffusion in the \textb{\RPETBP}}\label{sece0G0s}
We have already the tools to derive the scattering maps to the infinity manifold $\tilde \Lambda_\infty$, namely,
Proposition~\ref{theprop} to find transversal homoclinic orbits to $\tilde \Lambda_\infty$ and Proposition~\ref{scatteringlemma}
to give their expressions. Both of them rely on computations on the Melnikov potential $\mathcal{L}$.
Inserting in the Melnikov potential introduced in~\eqref{calL} the expression for
$\Delta\mathcal{U}\UNP $ in \eqref{Ucal0star}, we get

\begin{equation}\label{cL}
\mathcal{L}(\AO ,\GO ,\SO ;\EC )=\int_{-\infty}^{\infty}\Biggl[ \frac{x_{\HC}^{2}}{\bigl[4+x_{\HC}^{4}\RO ^{2}+4x_{\HC}^{2}\RO \cos (\alpha_{\HC}- f)\bigl]^{1/2}}
+\Bigl(\frac{x_{\HC}^{2}}{2} \Bigl)^{2} \RO \cos (\alpha_{\HC} -f)
-\frac{x_{\HC}^{2}}{2}\Biggl] \,dt
\end{equation}
where $x_{\HC}$ and $\alpha_{\HC}$, coordinates of the homoclinic orbit defined
in \eqref{homoclinic:h}, are evaluated at $t$, whereas $\RO $ and $f$, defined in
\eqref{r0f} and \eqref{f}, are evaluated at $\SO +t$.

To evaluate the above Melnikov potential, we will compute its Fourier coefficients with
respect to the angular variables $\AO ,\SO $. Since $x_{\HC}$ and $\RO $ are even
functions of $t$ and $f$ and $\alpha_{\HC}$ are odd, $\mathcal L$ is an even function
of the angular variables $\AO ,\SO $:
$\mathcal{L}(-\AO ,\GO ,-\SO ;\EC )=\mathcal{L}(\AO ,\GO ,\SO ;\EC )$,
and therefore $\mathcal L$ has a  Fourier Cosine series with real coefficients $L_{q,k}$:
\begin{equation}\label{eqn:LFourier1}
 \mathcal{L}=\sum_{q\in\mathbb{Z}}\sum_{k\in \mathbb{Z}}L_{q,k}\EXP ^{i(q\SO +k\AO )}
 =L_{0,0}+2\sum_{k\geq1}L_{0,k}\cos k\AO  +2\sum_{q\geq 1}\sum_{k\in \mathbb{Z}}L_{q,k}\cos(q\SO +k\AO ).
\end{equation}
The concrete computation of the Fourier coefficients of the Melnikov potential \eqref{cL}
will be carried out in section~\ref{EstMeliPot}. In that section, the following general bounds will
be obtained in Proposition~\ref{boundLqkbis} and Lemma~\ref{boundL0k}:
\begin{proposition}\label{boundLqk}
Let $\GO \geq 32$, $q\geq 1$, $k\geq 2$ and $\ell\geq 0$.
Then $|L_{q,\ell}|\leq B_{q,\ell}$ and $|L_{0,\ell}|\leq B_{0,\ell}$, where
\begin{equation}\label{eqn:BBounds}
\begin{split}
 B_{q,0}&= 2^{9+q} \,\EXP ^{2q} \EC ^q  \GO ^{-3/2}\EXP ^{-q\GO ^3/3}\\
 B_{q,1}&= 2^{11}  \,\EXP ^{2q}  \frac{\EC ^{q+1}}{\sqrt{1-\EC ^2}} \, \GO ^{-7/2} \EXP ^{-q \GO ^3/3}\\
 B_{q,- 1}&= 2^{9+q}  \,\EXP ^{2q}  \EC ^{|1-q|} \GO ^{-1/2} \EXP ^{-q\GO ^3/3}\\
 B_{q,k}&=  2^{2k+5} \EXP ^{2q} \frac{\EC ^{q+k}}{(\sqrt{1-\EC ^2})^k}\, \GO ^{-2k-1/2}  \EXP ^{-q \GO ^3/3}\\
 B_{q,-k}&= 2^{5+q+2k} \,\EXP ^{2q} \, \EC ^{|k-q|} \GO ^{k-1/2} \EXP ^{-q\GO ^3/3}\\
 B_{0, \ell}&= 2^{8+2\ell}\  \EC ^\ell\, \GO ^{-2\ell-3}
\end{split}
\end{equation}
\end{proposition}

Directly from this Proposition, we first see that the harmonics $L_{q,\ell}$ are exponentially
small for large $G$ and $q\geq 1$, so it will be convenient to split the Fourier expansion~\eqref{eqn:LFourier1} as
\begin{equation}\label{eqn:LFourier2}
 \mathcal{L}=\mathcal{L}_{0}+\mathcal{L}_{1}+\mathcal{L}_{2}+\cdots = \mathcal{L}_{0}+\mathcal{L}_{1}+\mathcal{L}_{\geq2}
\end{equation}
where
\begin{equation}\label{eqn:LFourier3}
\begin{split}
\mathcal{L}_{0}(\AO ,\GO ;\EC )&=L_{0,0}+2\sum_{k\geq1}L_{0,k}\cos k\AO ,\\
\mathcal{L}_{q}(\AO ,\GO ,\SO ;\EC )&=2\sum_{k\in \mathbb{Z}}L_{q,k}\cos(q\SO +k\AO ), \qquad q\geq 1.
\end{split}
\end{equation}
The function $\mathcal{L}_{0}$ does not depend on the angle $\SO $
as it contains the harmonics of $\mathcal{L}$ of order $0$ in $\SO $,
which are of finite order in terms of $\GO $,
$\mathcal{L}_{1}$ the harmonics of first order, which are of order $\EXP ^{-q\GO ^3/3}$,
and all the harmonics of $\mathcal{L}_{q}$ for $q\geq 2$
are much exponentially smaller for large $G$ than those of $\mathcal{L}_{1}$, so we will estimate $\mathcal{L}_{0}$ and $\mathcal{L}_{1}$
and bound $\mathcal{L}_{\geq2}$.

To this end, it will be necessary to sum the series in~\eqref{eqn:LFourier3}.
From the bounds~$B_{q,k}$ in~\eqref{eqn:BBounds} for the harmonics $L_{q,k}$
we get the quotients
\begin{equation}\label{eqn:Bquotients}
\frac{B_{q,k+1}}{B_{q,k}}=\frac{4}{\GO ^2} \frac{\EC }{\sqrt{1-\EC ^2}}\mbox{ for } k\geq 2,\quad
\frac{B_{q,-(k+1)}}{B_{q,-k}}=4\EC  \GO  \mbox{ for } k\geq q,\quad
\frac{B_{0,\ell+1}}{B_{0,\ell}}=\frac{4 \EC }{\GO ^2} \mbox{ for } \ell\geq 0.
\end{equation}
%which indicate that, for fixed $q$.
To guarantee the convergence of the  Fourier series of $\mathcal{L}_q$, we impose
the following conditions
$$
\GO >\sqrt{\frac {4 \EC}{\sqrt{1-\EC ^2 }}}\ \ \text{ and }\ \ \EC  \GO  <1/4.
$$
This is one of the  reasons why we are going to restrict ourselves to the region
$\GO \geq C$ large enough and $\EC  \GO  \leq c$ small  enough along
this paper to get the diffusive orbits.

Among the harmonics \textr{$L_{0,\ell}$ of $0$ order in $\SO $, by~\eqref{eqn:Bquotients},
the harmonic $L_{0,0}$ appears to be the dominant one, but
we will also estimate $L_{0,1}$ to get information about the variable $\AO $,
and bound the rest of harmonics $L_{0,\ell}$ for $\ell\geq 2$}.
Among the harmonics of first order $L_{1,k}$, again by~\eqref{eqn:Bquotients}, the five harmonics $L_{1,k}$
for $|k|\leq 2$ are the only candidates to be the dominant ones,
but the quotients from~\eqref{eqn:BBounds}
\textr{\begin{equation*}%\label{eqn:B1quotients}
%\frac{B_{1,2}}{B_{1,-1}}=\frac{(1+\EC )^2}{8\EXP \GO ^4},\qquad
\frac{B_{1,2}}{B_{1,-1}}=\frac{\EC ^3}{2(1-\EC^2)\GO ^4},\qquad
%\frac{B_{1,1}}{B_{1,-1}}=\frac{(1+\EC )^4}{8\EXP \GO ^3},\qquad
\frac{B_{1,1}}{B_{1,-1}}=\frac{2 \EC ^2}{\sqrt{1-\EC^2}\GO ^3},\qquad
\frac{B_{1,0}}{B_{1,-1}}=\frac{\EC }{\GO }=\frac{\EC \GO }{\GO ^2},
\end{equation*}
}
indicate that $L_{1,-1}$ and $L_{1,-2}$ appear to be the two dominant harmonics of order 1.
\textr{Nevertheless, as we will need to use two different scatterig maps, the coefficient
$L_{1,-3}$ will be necessary to check that both scatterig maps are independent.}
Summarizing, to compute the series~\eqref{eqn:LFourier1} we compute only
the \textr{five harmonics $L_{0,0}$, $L_{0,1}$, $L_{1,-1}$, $L_{1,-2}$ and $L_{1,-3}$}, and bound all the rest,
providing the following result, whose proof will also be carried out in section~\ref{EstMeliPot}.

\begin{theorem}\label{thepropositionmain}
For $\GO \geq 32$, $ \EC  \GO  \leq 1/8$,
the Melnikov potential~\eqref{cL} is given by
\begin{equation}\label{MeliL0L1}
 \mathcal{L}(\AO ,\GO ,\SO ;\EC )=
 \mathcal{L}_0(\AO ,\GO ;\EC )+\mathcal{L}_1(\AO ,\GO ,\SO ;\EC )+\mathcal{L}_{\geq2}(\AO ,\GO ,\SO ;\EC )
\end{equation}
with
\begin{align}
 \mathcal{L}_0(\AO ,\GO ;\EC )&=L_{0,0}+ L_{0,1}\cos\AO  +\mathcal{E}_{0}(\AO ,\GO ;\EC )\label{calL0}\\
 \mathcal{L}_1(\AO ,\GO ,\SO ;\EC )&=2 L_{1,-1}\cos(\SO -\AO )+2 L_{1,-2}\cos(\SO -2\AO )\nonumber \\
&+2 L_{1,-3}\cos(\SO -3\AO ) +\mathcal{E}_{1}(\AO ,\GO ,\SO ;\EC ), \nonumber%\label{calL1}
\end{align}
where the four harmonics above are given by
\begin{align}
L_{0,0}&=L_{0,0}(\GO ;\EC )=\frac{\pi}{2\GO ^{3}}(1+E_{0,0})\label{L00}\\
L_{0,1}&=L_{0,1}(\GO ;\EC )=-\frac{15\pi \EC }{8 \GO ^{5}}(1+E_{0,1})\label{L01}\\
\textr{2\,L_{1,-1}}&=2\,L_{1,-1}(\GO ;\EC )=\sqrt{\frac{\pi}{8\GO }}\EXP ^{-\GO ^3/3}(1+E_{1,-1})\label{L1-1}\\
\textr{2\,L_{1,-2}}&=2\,L_{1,-2}(\GO ;\EC )=-3\sqrt{2\pi}\EC \GO ^{3/2}\EXP ^{-\GO ^3/3}(1+E_{1,-2})\label{L1-2}\\
\textr{2\,L_{1,-3}}&=2\,L_{1,-3}(\GO ;\EC )=\frac{19}{8}\sqrt{2\pi}\EC ^2\GO ^{5/2}\EXP ^{-\GO ^3/3}(1+E_{1,-3})\label{L1-3}
\end{align}
and the error functions satisfy
\begin{align}
|E_{0,0}|&\leq 2^{12}\GO ^{-4}+2^2 \, 49 \, \EC ^2\notag\\
|E_{0,1}|&\leq 2^{13} \GO ^{-4}+\EC ^2\notag\\
|E_{1-1}|&\leq  2^{21} \GO ^{-1}+2 \, 49 \, \EC ^2\notag\\
|E_{1,-2}|&\leq  2^{17}\GO ^{-1}+\frac{49}{3}\EC \notag\\
\textr{|E_{1,-3}|}&\leq  2^{17}\GO ^{-1}+15\EC \notag\\
|\mathcal{E}_0|&\leq 2^{14} \,\EC ^2 \GO ^{-7}\notag\\
%\EC ^2 \GO ^{-3} + \GO ^{-7}\notag\\
|\mathcal{E}_1|&\leq 2^{18} \EC\EXP ^{-\GO ^3/3}\left[ \EC ^2\GO ^{7/2}+ \GO ^{-3/2} \right]\label{BoundE1}\\
\left|\mathcal{L}_{\geq2}\right|&\leq 2^{28} \GO ^{3/2}\EXP ^{-2\GO ^3/3}\label{BoundL2}
\end{align}
\end{theorem}

\begin{remark}\label{rem:Cc}
To estimate properly the  first  harmonics $L_{0,0}$, $L_{0,1}$, $L_{1,-1}$, $L_{1,-2}$, \textr{$L_{1,-3}$} we will need to take $G>C$, with $C$ big
enough and $ \EC G <c$ with $c$ small enough to ensure that the corresponding relative errors $E_{i,j}$ are smaller than one, say   $|E_{i,j}|\le 1/2$.
This is the main reason why
we have to enlarge the constant $C=32$ given in Theorem \ref{thepropositionmain}.
\end{remark}

The function $\mathcal{L}_1$ introduced in \eqref{eqn:LFourier3} with $q=1$, contains only harmonics of first order in $\SO $,
so we can write it as a cosine function in $\SO $.
Introducing the parameters (depending on $\GO$ and $\EC$)
\begin{align}
p&:=-\frac{L_{1,-2}}{L_{1,-1}}=12\EC \GO ^2\frac{1+E_{1,-2}}{1+E_{1,-1}}=:12\EC \GO ^2(1+E_p)\label{eqn:p}\\
q&:=-\frac{L_{1,-3}}{L_{1,-2}}=\frac{19}{24}\EC \GO \frac{1+E_{1,-3}}{1+E_{1,-2}}=:\frac{19}{24}\EC \GO (1+E_q)\label{eqn:q}
\end{align}
with
\[
E_p,E_q =O(\EC,\GO^{-1})%\label{EpEq}
\]
in the expression~\eqref{eqn:LFourier3} of $\mathcal{L}_1$, we can write
\begin{align}
\mathcal{L}_1&=2L_{1,-1}\left(\sum_{k\in \mathbb{Z}}\frac{L_{1,k}}{L_{1,-1}}\cos(\SO +k\AO )\right)\notag\\
&=2L_{1,-1}\left(\cos(\SO -\AO )-p\cos(\SO -2\AO )+qp\cos(\SO -3\AO )+
\sum_{k\neq -1,-2,-3}\frac{L_{1,k}}{L_{1,-1}}\cos(\SO +k\AO )\right)\notag\\
&=2L_{1,-1}\Re\left(\EXP ^{i(\SO -\AO )}\left(1-p\EXP ^{-i\AO }+qp\EXP ^{-2i\AO }+
\sum_{k\neq -1,-2,-3}\frac{L_{1,k}}{L_{1,-1}}\EXP ^{i(k+1)\AO }\right)\right)\notag\\
&=2L_{1,-1}\Re\left(\EXP ^{i(\SO -\AO )}B \EXP ^{-i\theta}\right)
=2L_{1,-1}B \cos(\SO -\AO -\theta),
\label{eqn:L1cosine}
\end{align}
where $B=B(\AO ,\GO ;\EC )\geq0$ and $-\theta=-\theta(\AO ,\GO ;\EC )\in[-\pi,\pi)$
are the modulus and the argument of the complex expression
\begin{equation}\label{eqn:Btheta}
1-p\EXP ^{-i\AO }+qp\EXP ^{-2i\AO }+
\sum_{k\neq -1,-2,-3}\frac{L_{1,k}}{L_{1,-1}}\EXP ^{i(k+1)\AO }=:B \EXP ^{-i\theta}.
\end{equation}
Writing also in polar form the quotient of the sum
in~\eqref{eqn:Btheta} by the parameter $p$ introduced in~\eqref{eqn:p}
\[
E \EXP ^{-i\phi}:=
\sum_{k\neq -1,-2,-3}\frac{L_{1,k}}{pL_{1,-1}}\EXP ^{i(k+1)\AO }=
-\sum_{k\neq -1,-2,-3}\frac{L_{1,k}}{L_{1,-2}}\EXP ^{i(k+1)\AO }=
q\sum_{k\neq -1,-2,-3}\frac{L_{1,k}}{L_{1,-3}}\EXP ^{i(k+1)\AO },
\]
with $E=E(\AO ,\GO ;\EC )\geq0$ and $-\phi=-\phi(\AO ,\GO ;\EC )\in[-\pi,\pi)$,
equation~\eqref{eqn:Btheta} for $B$ and $\theta$ reads now as
\begin{equation}\label{eqn:Bthetacomplex}
B \EXP ^{-i\theta}= 1-p\EXP ^{-i\AO }+qp\EXP ^{-2i\AO }+p E \EXP ^{-i\phi}
\end{equation}
or, equivalently, as the couple of real equations
\begin{align}
B \cos \theta&=1-p \cos\AO + qp \cos2\AO + p E \cos \phi= 1-qp -(1-2q\cos\AO)p\cos\AO + p E \cos \phi\label{eqn:Btheta1}\\
-B \sin \theta&= p \sin\AO  - qp \sin2\AO - p E \sin \phi =(1-2q \cos\AO)p \sin\AO  - p E \sin \phi\label{eqn:Btheta2}.
\end{align}
\textr{One can also obtain explicit formulas for $B$:
\begin{equation}\label{eq:B}
\begin{array}{rcl}
B^2&=&1+p^2(1+q^2+E^2)\\
&+&
2p\left( -(1+qp)\cos \AO+ q\cos(2\AO)+E\cos \phi+pE\cos(\phi+\AO) +p q E\cos(\phi-2\AO) \right)
\end{array}
\end{equation}
}

The function $E=E(\AO ,\GO ;\EC )$ is small, since, by~\eqref{BoundE1},~\eqref{L1-2} and~\eqref{eqn:p},
if $\GO>C$ is large enough and $\GO \EC<c$ is small enough (see Remark \ref{rem:Cc}),
\begin{equation}\label{BoundE}
|E|\leq \frac{|\mathcal{E}_1|}{|L_{1,-2}|}
%=\frac{|\mathcal{E}_1|}{|p L_{1,-1}|}
\leq
\frac{2^{19} \EC (\EC ^2 \GO ^{7/2}+ \GO ^{-3/2})}{\frac {3}{2}\sqrt{2\pi}\EC \GO ^{3/2}}
= \frac{2^{20}}{3 \sqrt{2 \pi}}  (\EC ^2 \GO ^2 + \GO ^{-3})
%\frac{1}{3\sqrt{2\pi}}\left(\left(1+\frac{3\pi(1+\EC )^4}{2\GO }\right)\frac{1}{\GO ^3}+\EC  \GO \right)
=O\left(\GO ^{-3},\EC ^2 \GO ^2\right),
\end{equation}
with an analogous bound for its derivative with respect to $\AO $.

 From expression~\eqref{eqn:L1cosine}, $\mathcal{L}_1$  is a genuine cosine function in s
(non identically zero) as long as $B>0$. If we first consider the case $E=0$ in the
equations~(\ref{eqn:Btheta1}-\ref{eqn:Btheta2}) defining $B$, it follows that $B=0$ only for
$\AO=0$ and $1-p+qp=0$, or $p=1/(1-q)$, that is, for $\GO \simeq (12 \EC )^{-1/2}$
(see~(\ref{eqn:p}-\ref{eqn:q})).
A totally analogous property holds when $E$ is taken into account:
%Writing equation~\eqref{eqn:Bthetacomplex} as
%\begin{equation}\label{BBhat}
%B \EXP ^{-i\theta}=\widehat{B} \EXP ^{-i\widehat{\theta}}+p E \EXP ^{-i\phi}
%\end{equation}
%one gets the explicit formulae for $\widehat{B}$ and $\widehat{\theta}$
%\begin{align}
%\widehat{B}&=\sqrt{1-2p\cos \AO +p^2}=\sqrt{(1-p)^2+4p\sin^2\frac{\AO}{2} }\geq 0,
%\label{eqn:Bhat}\\
%\widehat{\theta}&=
%-2 \arctan\left(\frac{p\sin\AO }{\widehat{B}+1-p\cos\AO }\right)\in(-\pi,\pi]\notag
%\end{align}
%from which we see that $\widehat{B}$ behaves like a distance to the point $p=1$ and
%$\AO =0$. The angle $\widehat{\theta}$ is not well defined when $\widehat{B}=0$, but
%this happens only for $\AO =0$ and $p=1$, that is, for $\GO \simeq (12 \EC )^{-1/2}$ (see \eqref{eqn:p}).
%A totally analogous property holds for $B$:
\begin{lemma}\label{Bnotzero}
There exists $C>32$ and $c<1/8$ such that, for $G\ge C$ and $\EC G<c$, then $B(\AO ,\GO ;\EC )>0$ except for $\AO =0$ and
$\sum_{k\in\mathbb{Z}}L_{1,k}=0$.
\end{lemma}
\begin{remark}
$\displaystyle\sum_{k\in\mathbb{Z}}L_{1,k}=0 \iff 1-p+qp+\sum_{k\neq -1,-2,-3}\frac{L_{1,k}}{L_{1,-1}}=0\iff
p=\frac{\displaystyle 1+\sum_{k\neq -1,-2,-3}\frac{L_{1,k}}{L_{1,-1}}}{1-q}$.
\end{remark}
\begin{proof}
For $B=0$,
equation~\eqref{eqn:Btheta2} reads as
\begin{equation}\label{eqn:alpha}
\sin\AO =f(\AO ).
\end{equation}
where $f(\AO )=f(\AO ,\GO ;\EC ):=E \sin\phi/(1-2q\cos\AO)$.
By~\eqref{eqn:q} and~\eqref{BoundE}, if $\GO>C$ is large enough and $\GO \EC<c$ is small enough, we have that
$f^2 + (\partial f/\partial\AO )^2 <1$, and therefore there are exactly two simple solutions
of equation~\eqref{eqn:alpha} in the interval $[-\pi/2,3\pi/2]$; one is $\alpha_{0,+}^*\in(-\pi/2,\pi/2)$
obtained as a fixed point of the contraction $\AO =\arcsin\left(f(\AO ,\GO ;\EC )\right)$,
and a second $\alpha_{0,-}^* \in(\pi/2,3\pi/2)$
fixed point of the contraction $\AO =\pi-\arcsin\left(f(\AO ,\GO ;\EC )\right)$.
Taking a closer look at equation~\eqref{eqn:Bthetacomplex},
we see that if $\AO $ changes to $-\AO $,  then $-\phi, -\theta, B, E$ are solutions of~\eqref{eqn:Bthetacomplex}
or, in other words, $\phi,\theta$ are odd functions of $\AO $ and $B, E$ even. Therefore,
$\AO =0,\pi$  are the unique solutions of equation~\eqref{eqn:Btheta2} for $B=0$.
Substituting $\AO =0,\pi$ in~\eqref{eqn:Btheta1} for $B=0$, only $\AO =0$ provides
a positive $p$, which is then given by $p=1+qp+pE=(1+\sum_{k\neq -1,-2,-3}L_{1,k}/L_{1,-1})/(1-q)$.
\end{proof}

We are now in position to find critical points of the function
$\SO \mapsto \mathcal{L}(\AO ,\GO ,\SO ;\EC )$.
To this end we will check that $\SO \mapsto \mathcal{L}(\AO ,\GO ,\SO ;\EC )$ is indeed
a \emph{cosine-like} function, that is, with a non-degenerate maximum (minimum) and
no other critical points.
By Theorem~\ref{thepropositionmain}, the dominant part
of the Melnikov potential $\mathcal{L}$ is given by $\mathcal{L}_0+\mathcal{L}_1$.
By equation \eqref{MeliL0L1} and the bounds for the error term, for $\GO>C$ big enough and $\GO \EC<c$ small enough,
the critical points in the variable $\SO $ are well approximated by the critical points of
the function $\mathcal{L}_0+\mathcal{L}_1$ \textr{(in fact of $\mathcal{L}_1$ because  $\mathcal{L}_0$ does not depend on $\SO$)}
and therefore will be close to
$\SO -\AO -\theta=0,\pi \pmod{2\pi}$ thanks to expression~\eqref{eqn:L1cosine}.
For this purpose, we introduce
\begin{equation}
\label{L1L1star}
 \mathcal{L}_1^*=\mathcal{L}_1^*(\AO ,\GO ;\EC )=2L_{1,-1} B%\label{L1*}
\end{equation}
where $B=B(\AO ,\GO ;\EC )$ is given in~\eqref{eqn:Btheta} \textr{(see also \eqref{eq:B})} and $L_{1,-1}$ is the harmonic
computed in~\eqref{L1-1}. By \eqref{eqn:L1cosine} and \eqref{L1L1star}, the function $\mathcal{L}_1$ can thus be written as a cosine function in $\SO $
\[
%\label{L1star}
\mathcal{L}_1(\AO ,\GO ,\SO ;\EC )=\mathcal{L}_1^*(\AO ,\GO ;\EC )\, \cos (\SO -\AO -\theta),
\]
and differentiating the Melnikov potential~\eqref{MeliL0L1} with respecto to $\SO $ we get
\begin{equation*}%\label{eqn:Lcritical}
\frac{\partial{\mathcal{L}}}{\partial \SO }=-\mathcal{L}_1^*\sin(\SO -\AO -\theta)
+\frac{\partial{\mathcal{L}}_{\geq2}}{\partial \SO }=0
\Longleftrightarrow
\sin(\SO -\AO -\theta)=\frac{1}{\mathcal{L}_1^*}\frac{\partial{\mathcal{L}}_{\geq2}}{\partial \SO }
\end{equation*}
which is a equation of the form~\eqref{eqn:alpha} for $\SO -\AO -\theta$ instead of $\AO $
and $f=\left(\partial{\mathcal{L}}_{\geq2}/\partial \SO \right)/\mathcal{L}_1^*=
\left(\partial{\mathcal{L}}_{\geq2}/\partial \SO \right)/(2L_{1,-1}B)$.
Therefore, as long as $f^2 + (\partial f/\partial\AO )^2 <1$, there exist exactly two non-degenerate
critical points $\SO_{\pm}^*$ of the function $\SO \mapsto \mathcal{L}(\AO ,\GO ,\SO ;\EC )$.

Now, by estimate~\eqref{L1-1} for $L_{1,-1}$, bound~\eqref{BoundL2} for $\mathcal{L}_{\geq2}$
and Lemma~\ref{Bnotzero}, it turns out that
$f^2 + (\partial f/\partial\AO )^2 <1$
happens outside of a neighborhood of size $O\left(\GO ^{3/2}\EXP ^{-\GO ^3/3}\right)$
of the point
\begin{equation}\label{eqn:G0*}
(\AO =0,\GO =\GO ^*)\ \textrm{where}\ \GO ^*\approx (12 \EC )^{-1/2}\ \textrm{is such that}\
p=\frac{\displaystyle 1+\sum_{k\neq -1,-2,-3}\frac{L_{1,k}}{L_{1,-1}}}{1-q}.
\end{equation}

%\marginpar{l'entorn hauria de ser $O\left(\GO ^2\EXP ^{-\GO ^3/3}\right)$}.

Let us recall now that the Melnikov function $\mathcal{L}$ (see \eqref{eqn:LFourier2},
\eqref{eqn:LFourier3}), as well as its terms $\mathcal{L}_q$ are all expressed
as Fourier Cosine series in the angles $\AO $ and $\SO $, or equivalently, they are even functions of
$(\AO ,\SO )$.
Consequently, $\partial \mathcal{L}_q/\partial \SO $ is an odd function of $(\AO ,\SO )$,
and it is easy to check that each critical point $\SO_{\pm}^*$  is an odd function of $\alpha$.
Moreover, using the Fourier Sine expansion of $\partial \mathcal{L}_q/\partial \SO $,
one sees that if $\SO $ is a critical point of $\SO \mapsto \mathcal{L}(\AO ,\GO ,\SO ;\EC )$, $\SO +\pi$ too,
so $\SO_{-}^*=\SO_{+}^*+\pi$. We state all this in the following Proposition.

\begin{proposition}%\label{coslike}
Let $\mathcal{L}$ be the Melnikov potential given in \eqref{MeliL0L1}.
There exists $C>32$ and $c<1/8$ such that, for $G\ge C$ and $\EC G<c$,
%, $\GO \geq 32$ and $\EC \GO \leq 1/8$.
except for a neighborhood of size $O\left(\GO ^{3/2}\EXP ^{-\GO ^3/3}\right)$ of the point
$(\AO =0,\GO =\GO ^*)$ given in~\eqref{eqn:G0*},
$\SO \mapsto \mathcal{L}(\AO ,\GO ,\SO ;\EC )$ is a \emph{cosine-like} function, and its critical
points are given by
\[
\SO_{+}^*=\SO_{+}^*(\AO ,\GO ;\EC )=\AO +\theta+\varphi^*, \qquad
\SO_{-}^*=\SO_{-}^*+\pi=\AO +\theta+\pi+\varphi^*
\]
where $\theta=\theta(\AO ,\GO ;\EC )$ is given in~\eqref{eqn:Btheta} and
$\varphi^*=O\left(\GO ^{3/2}\EXP ^{-\GO ^3/3}\right)$.
\end{proposition}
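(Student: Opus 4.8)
The plan is to reduce the equation $\partial\mathcal{L}/\partial\SO=0$ to a one-dimensional fixed point problem of exactly the type already solved in the proof of Lemma~\ref{Bnotzero}, and then to read off the number, location and non-degeneracy of its roots from the sizes established in Theorem~\ref{thepropositionmain}. First I would use the decomposition~\eqref{MeliL0L1}: since $\mathcal{L}_0$ does not depend on $\SO$ and, by~\eqref{calL1b}, $\mathcal{L}_1=\mathcal{L}_1^*(\AO,\GO;\EC)\cos(\SO-\AO-\theta)$ with $\mathcal{L}_1^*=2L_{1,-1}B$ and $\theta=\theta(\AO,\GO;\EC)$ both independent of $\SO$, differentiating in $\SO$ produces exactly~\eqref{eqn:Lcritical},
\[
\sin(\SO-\AO-\theta)=\frac{1}{\mathcal{L}_1^*}\,\frac{\partial\mathcal{L}_{\geq2}}{\partial\SO}=:g(\AO,\GO,\SO;\EC).
\]
Fixing $(\AO,\GO,\EC)$ and introducing $\psi=\SO-\AO-\theta$ (so $\partial\psi/\partial\SO=1$), this becomes $\sin\psi=g$, which is the equation~\eqref{eqn:alpha} appearing in the proof of Lemma~\ref{Bnotzero} with $g$ in place of $f$.

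The quantitative input I would then establish is that $g$ and $\partial g/\partial\SO$ are uniformly small, in fact $g^2+(\partial g/\partial\SO)^2<1$, away from the excluded set. For the numerator, differentiating the Fourier cosine series of $\mathcal{L}_{\geq2}$ term by term only multiplies the $q$-th block by $q\ge2$ while preserving the factor $\mathrm{e}^{-q\GO^3/3}$, so the resummation behind~\eqref{BoundL2} also gives $|\partial\mathcal{L}_{\geq2}/\partial\SO|=O(\GO^{3/2}\mathrm{e}^{-2\GO^3/3})$, and a bound of the same order for $\partial^2\mathcal{L}_{\geq2}/\partial\SO^2$ (needed below for the contraction and for non-degeneracy). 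For the denominator, $L_{1,-1}>0$ by~\eqref{L1-1} (its error term is $<1$ when $\GO\ge32$) and $B=B(\AO,\GO;\EC)>0$ by Lemma~\ref{Bnotzero} except at $\AO=0$, $p=1+\sum_{k\ne-1,-2}L_{1,k}/L_{1,-1}$. Comparing~\eqref{L1-1} with~\eqref{BoundL2}, all the required smallness of $g$ and its derivatives holds precisely when $\mathcal{L}_1^*=2L_{1,-1}B$ dominates $|\partial\mathcal{L}_{\geq2}/\partial\SO|$, i.e. outside the neighbourhood of size $O(\GO^{3/2}\mathrm{e}^{-\GO^3/3})$ of $(\AO=0,\GO=\GO^*)$ singled out in~\eqref{eqn:G0*}.

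On that region the argument of Lemma~\ref{Bnotzero} applies verbatim: $\sin\psi=g(\psi)$ has exactly two simple solutions per period, one $\varphi_+^*\in(-\pi/2,\pi/2)$ obtained as the fixed point of the contraction $\psi=\arcsin g(\psi)$ and one near $\pi$ obtained from $\psi=\pi-\arcsin g(\psi)$, both at distance $O(\GO^{3/2}\mathrm{e}^{-\GO^3/3})$ from $0$ and $\pi$ respectively. Undoing $\psi=\SO-\AO-\theta$ yields the critical points $s_{0,+}^*=\AO+\theta+\varphi_+^*$ and $s_{0,-}^*=\AO+\theta+\pi+\varphi_-^*$ with $\varphi_\pm^*=O(\GO^{3/2}\mathrm{e}^{-\GO^3/3})$; that $\SO^*$ is odd in $\AO$ and that the two critical points are $\pi$-related follow from the parity of $\mathcal{L}$ in $(\AO,\SO)$ as explained in the paragraph preceding the statement (the difference being exactly $\pi$ for the dominant part $\mathcal{L}_0+\mathcal{L}_1$). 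Finally, non-degeneracy and the \emph{cosine-like} character come from
\[
\frac{\partial^2\mathcal{L}}{\partial\SO^2}=-\mathcal{L}_1^*\cos(\SO-\AO-\theta)+\frac{\partial^2\mathcal{L}_{\geq2}}{\partial\SO^2},
\]
evaluated at the two roots: at $s_{0,+}^*$ one has $\cos(\SO-\AO-\theta)$ close to $1$ and $|\partial^2\mathcal{L}_{\geq2}/\partial\SO^2|<\mathcal{L}_1^*$, hence $\partial^2\mathcal{L}/\partial\SO^2<0$ (a non-degenerate maximum, since $L_{1,-1}>0$), while at $s_{0,-}^*$ the cosine is close to $-1$, giving a non-degenerate minimum; so on the circle $\SO\in\mathbb{T}$ there are exactly one maximum and one minimum.

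The main obstacle is the uniform control of the ratio $g=(\mathcal{L}_1^*)^{-1}\partial\mathcal{L}_{\geq2}/\partial\SO$: one must both pin down the exponentially small size of $\partial\mathcal{L}_{\geq2}/\partial\SO$ sharply enough (essentially the resummation already behind~\eqref{BoundL2}, but applied to the differentiated series) and track precisely how $\mathcal{L}_1^*=2L_{1,-1}B$ degenerates near $\AO=0$, $p=1$, since it is exactly the competition between these two exponentially small quantities that fixes the size $O(\GO^{3/2}\mathrm{e}^{-\GO^3/3})$ of the neighbourhood of $(\AO=0,\GO=\GO^*)$ that must be excluded. Everything else — the two-root count, the contraction estimates, and the sign of the second derivative — is a direct transcription of the reasoning already carried out for Lemma~\ref{Bnotzero}.
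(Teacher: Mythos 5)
Your proposal follows essentially the same route as the paper: reduce $\partial\mathcal{L}/\partial\SO=0$ via the representation $\mathcal{L}_1=\mathcal{L}_1^*\cos(\SO-\AO-\theta)$ to the equation $\sin(\SO-\AO-\theta)=(\partial\mathcal{L}_{\geq2}/\partial\SO)/\mathcal{L}_1^*$, recognize it as an instance of~\eqref{eqn:alpha} and apply the contraction argument of Lemma~\ref{Bnotzero} outside the neighbourhood of $(\AO=0,\GO=\GO^*)$ where $\mathcal{L}_1^*$ degenerates, then obtain $s_{0,-}^*=s_{0,+}^*+\pi$ from the parity of the Fourier cosine series. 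You are in fact slightly more careful than the text in two places the paper leaves implicit — the term-by-term differentiation of $\mathcal{L}_{\geq2}$ to bound $\partial\mathcal{L}_{\geq2}/\partial\SO$, and the explicit second-derivative check of non-degeneracy — so the argument is correct and matches the paper's.
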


\marginpar{el mateix de $\varphi^*$}
From the Proposition above we know that there exist  $\SO_{+}^*$ and $\SO_{-}^*=\SO_{-}^*+\pi$, non-degenerate critical points of
$\SO \mapsto \mathcal{L}(\AO ,\GO ,\SO ;\EC )$.
Therefore, applying Proposition \ref{theprop} and Remark \ref{rem:dist}, we know that $W^u(\tilde \Lambda_\infty)$ intersects transversally
$W^s(\tilde \Lambda_\infty)$ if $0<\mu \ll \mu^*$ with
\[
\mu ^*= O\left(\frac{\partial^2}{\partial \SO^2 }\left(\mathcal{L}\left(\AO ,\GO ,\SO^*;\EC \right)\right)\right).
\]
Using Theorem \ref{thepropositionmain} and \eqref{L1L1star}, we see that it is enough to impose that:
\[
|\mu|\ll \left|\mathcal{L}^*_1\right|=2|L_{1,-1}B|=O\left(\GO ^{-1/2}\EXP ^{-\GO ^3/3}\right),
\]
that is, $\mu$ exponentially small for large $\GO $ in the region $C\leq \GO \leq c/\EC $
which a fortiori is satisfied for
\begin{equation}\label{Boundmu}
0<\mu \ll \mu^*= \EXP ^{-(c/\EC )^3 /3}.
\end{equation}
%\marginpar{no veig clar el $\mu^*$}

We will see that this is the relation between the eccentricity
and the mass parameter that we need to guarantee that our main result, Theorem \ref{MainResult}, holds.
This kind of relation is typical
in problems with exponentially small splitting, when the bound of the remainder, here $O(\mu^2)$, is obtained
through a direct application of the Melnikov method for the real system. To get better estimates for this remainder,
one needs to bound this remainder for complex values of the parameter $t$ or $\tau$ of the
parameterization~\eqref{homoclinic:h} of the unperturbed separatrix. Such approach has recently been
used for in the RPCTBP in~\cite{GuardiaMS12} and it is likely to work in the \textb{\RPETBP}, allowing us to
consider any $\mu\in(0,1/2]$, that is, imposing no restrictions on the mass parameter, although this is not the purpose
of this paper which focuses on the geometric mechanism that gives rise to diffusive orbits.

We are now in position to define two different scattering maps $\tilde S^\pm$.
By Proposition \ref{scatteringlemma}, we begin by defining  two different reduced Poincar\'e functions~\eqref{Lcalstar}
\begin{align*}
 \mathcal{L}^*_{\pm}(\AO ,\GO ;\EC )&=\mathcal{L}(\AO ,\GO ,\SO_{\pm}^*;\EC )\\
&= \mathcal{L}_0(\AO ,\GO ;\EC )\pm \mathcal{L}^*_1(\AO ,\GO ;\EC )+\mathcal{E}_{\pm}(\AO ,\GO ;\EC ).
\end{align*}

By the symmetry properties of $\mathcal{L}_{q}(\AO, \GO, \SO;\EC)$ (see \eqref{eqn:LFourier3}), it turns out that
each $(\mathcal{L}^*_{q})_{\pm}(\AO ,\GO ;\EC )=\mathcal{L}(\AO ,\GO ,s_\pm^*;\EC )$ is an even
function of $\alpha$.
\textr{Moreover, since $\SO_{-}^*=\SO_{+}^*+\pi$, one has that
$(\mathcal{L}^*_{q})_-=(-1)^q (\mathcal{L}^*_{q})_+$, so we can write the reduced Poincar\'e map as
\begin{equation}\label{eqn:L*sum}
\mathcal{L}^*_{\pm}=\mathcal{L}_0\pm \mathcal{L}^*_1 +\mathcal{L}^*_2\pm\mathcal{L}^*_3+\mathcal{L}^*_4\pm\cdots
\end{equation}
with $\mathcal{L}^*_{q}=(\mathcal{L}^*_{q})_+$.}

From the expression for the scattering map given in Proposition \ref{scatteringlemma} we can define two different scattering maps
$\widetilde S_{\pm}(\AO ,\GO ,\SO )=( S_{\pm}(\AO ,\GO ,\SO ), \SO)$, where
\begin{equation}\label{Spm}
S_{\pm}(\AO ,\GO ,\SO )=
\left(\AO  -\mu \frac{\partial \mathcal{L}^*_\pm}{\partial G}(\AO ,\GO ;\EC )+O(\mu^2),
\GO +\mu \frac{\partial\mathcal L^*_\pm}{\partial \alpha}(\AO ,\GO ;\EC )+O(\mu^2) \right).
\end{equation}
These two scattering maps are different since they depend on the two reduced
Poincar\'e-Melnikov potentials $\mathcal{L}^*_\pm$. From their expression~\eqref{Spm},
the scattering maps $ S_{\pm}$ follow closely the level curves of the Hamiltonians $\mathcal{L}^*_\pm$.
More precisely, up to $O(\mu^2)$ terms, $ S_{\pm}$ is given by the time $-\mu$ map of the Hamiltonian flow
of Hamiltonian $\mathcal{L}_\pm^*$. The $O(\mu^2)$ remainder will be negligible as long as
\[
|\mu|\ll \left|\frac{\partial \mathcal{L}^*_\pm}{\partial G}\right|,
\left|\frac{\partial\mathcal L^*_\pm}{\partial \alpha}\right|,
\]
which is already true for $\mu \ll \mu^*$ in \eqref{Boundmu}.
%Nevertheless, since we want to switch scattering maps, we will need to impose

We want to show now that the foliations of $\mathcal{L}^*_\pm=\textrm{constant}$ are different, since this will imply
that the scattering maps \textr{$S_{\pm}$} are different.
Even more, we will design a mechanism in which we will determine the places in the plane $(\AO ,\GO )$ where
we will change from one scattering map to the other, obtaining trajectories with increasing angular momentum $G$.
To check that the level curves of $\mathcal{L}^*_+$ and $\mathcal{L}^*_-$ are different, and indeed transversal,
we only need to check that their Poisson bracket is not zero.
Since $\mathcal{L}^*_+$ and $\mathcal{L}^*_-$ are even functions of $\AO $, their Poisson bracket
$\{\mathcal{L}^*_+,\mathcal{L}^*_+\}$ will be an odd function of $\AO $, so we already know that
it will have a factor $\sin \AO $.
Using equation~\eqref{eqn:L*sum} we can write
\begin{equation}\label{eq:Poisonb}
  \{\mathcal{L}^*_+,\mathcal{L}^*_-\}=
\{\mathcal{L}_0+\mathcal{L}^*_1+\mathcal{L}^*_2+\cdots,\mathcal{L}_0-\mathcal{L}^*_1+\mathcal{L}^*_2-\cdots\}
=-2 \{\mathcal{L}_0,\mathcal{L}^*_1\}+\mathcal{E}_3
\end{equation}

where $\mathcal{E}_3$ contains only Poisson brackets of odd order
\[
\mathcal{E}_3=-2\left(\{\mathcal{L}_0,\mathcal{L}^*_3\}+\{\mathcal{L}^*_1,\mathcal{L}^*_2\}\right)
-2\sum_{ q\ \textrm{odd}\geq5} \sum_{q=0}^{[q/2]}\{\mathcal{L}^*_{q'},\mathcal{L}^*_{q-q'}\}.
\]
Therefore, by formula \eqref{eqn:LFourier3} defining $\mathcal{L}_q$ and  the bounds~\eqref{eqn:BBounds} for the harmonics $L_{q,k}$,
the error term $\mathcal{E}_3=O\left(\EXP ^{-\GO ^3}\right)$ is much
exponentially smaller for large $\GO $ than $\{\mathcal{L}_0,\mathcal{L}^*_1\}$,
which is $O\left(\EXP ^{-\GO ^3/3}\right)$ and we now compute.

By differentiating $\mathcal{L}_0$, using~\eqref{calL0} and bounds \eqref{L00} and \eqref{L01}, one easily obtains:
\textr{
\begin{eqnarray*}
%\frac{\partial \mathcal{L}_0}{\partial \alpha}&=&\EC \sin \alpha\left(\frac{15 \pi }{8 \GO ^5} + O\left(\GO^{-9}, \EC  \GO^{-7}, \EC ^2 \GO^{-5}\right)\right)\\
\frac{\partial \mathcal{L}_0}{\partial \alpha}&=&\frac{15 \pi \EC}{8 \GO ^5} \sin \alpha \left(1 + O(\GO^{-4}, \EC  \GO^{-2}, \EC ^2 \GO^{-1})\right)\\
 %\frac{\partial \mathcal{L}_0}{\partial \GO}&=&  -\frac{3\pi}{2\GO ^4}+ \frac{75 \pi \EC}{8 \GO ^6} \cos \alpha+ O\left(\GO^{-8},  \EC ^2 \GO^{-4}\right).\\
 \frac{\partial \mathcal{L}_0}{\partial \GO}&=&  -\frac{3\pi}{2\GO ^4}\left(1+O(\GO^{-4}, \EC^2)\right)+
 \frac{75 \pi \EC}{8 \GO ^6} \cos \alpha \left(1+ O(\GO^{-4},  \EC ^2 )\right).
\end{eqnarray*}}
With respect to $\mathcal{L}^*_1=2 L_{1,-1}B$, we will use \eqref{eqn:Btheta} and the definitions of $p,q$ in (\ref{eqn:p}-\ref{eqn:q}) which give
\[
\mathcal{L}^*_1 \EXP ^{-i\theta}=
%2L_{1,-1}+2L_{1,-2}\EXP ^{-i\AO }+2L_{1,-3}\EXP ^{-2i\AO }+\sum_{k\neq -1,-2,-3}L_{1,k}\EXP ^{i(k+1)\AO }=
2L_{1,-1}+2L_{1,-2}\EXP ^{-i\AO }+2L_{1,-3}\EXP ^{-2i\AO } +\mathcal{E}^*_1,
\]
where the error term $\mathcal{E}^*_1$ contains a factor $\sin \AO$ and satisfies the same bound as $\mathcal{E}_1$ in~\eqref{BoundE1}:
\[
\mathcal{E}^*_1=\sum_{k\neq -1,-2,-3}L_{1,k}\EXP ^{i(k+1)\AO }=O\left(\EC\GO^{-\frac{3}{2}}, \EC^3\GO^{\frac{7}{2}}\right)\EXP ^{-\GO ^3/3}
\]
%$\mathcal{E}^*=O(\EC^2\GO^{\frac{5}{2}},\EC\GO^{-\frac{3}{2}})\EXP ^{-\GO ^3/3}$.

Taking into account the expressions for $L_{1,-1},L_{1,-2},L_{1,-3}$ given in~(\ref{L1-1}-\ref{L1-3}),
%\eqref{BBhat} and~\eqref{BoundE}, in their dominant and non-dominant parts
%\[
%\mathcal{L}_0=\widehat{\mathcal{L}}_0+\widehat{\mathcal{E}}_0,\qquad
% L_{1,-1}=\widehat{L}_{1,-1}(1+E_{1,-1}),\qquad
% B=\widehat{B}+E_B,
%\]
%where
%\[
 %\widehat{\mathcal{L}}_0= \frac{\pi}{2 \GO ^3}-\frac{15 \pi\EC}{8 \GO ^5}cos \alpha, \qquad \widehat{L}_{1,-1}=\sqrt{\frac{\pi}{8 \GO}}\EXP ^{-\GO ^3/3},
%\]
%and $\widehat{B}$ is given in \eqref{eqn:Bhat}.
%Calling $\widehat{\mathcal{L}}^*_1=2\widehat{L}_{1,-1}\widehat{B}$,
after a straightforward computation, we arrive at
%\begin{eqnarray*}
%2\mathcal{L}^*_1 \frac{\partial \mathcal{L}^*_1}{\partial \AO}&=&\EC \pi \sin \alpha\EXP ^{-2\GO ^3/3}\left(12 \GO  + O(1, \EC  \GO^{2}, \EC ^2 \GO^{4})\right)\\
%2\mathcal{L}^*_1  \frac{\partial \mathcal{L}_0}{\partial \GO}&=& \pi \EXP ^{-2\GO ^3/3}\left( -\GO - 144 \EC^2 \GO ^5+ 24 \EC \GO ^3 \cos \alpha+
%O\left(1, \EC \GO^{2},  \EC ^2 \GO^{4},  \EC ^3 \GO^{6}\right)\right).
%\end{eqnarray*}
\begin{eqnarray*}
\frac{\partial \mathcal{L}^*_1}{\partial \AO}&=&\frac{1}{B}\frac{\partial B}{\partial \AO}\mathcal{L}^*_1=
\frac{p \mathcal{L}^*_1 \sin\AO}{B^2}\left(1+qp-2q\cos\AO
%+O\left(\EC\GO^{-1},(\EC\GO)^3\GO\right)\right)\\
+O\left(\GO^{-3}, \EC \GO^{-1}, \EC^2\GO^{2},(\EC\GO)^3\GO\right)\right)\\
\frac{\partial \mathcal{L}^*_1}{\partial \GO}&=&-\GO^2(1+O(\GO^{-1})\mathcal{L}^*_1.
\end{eqnarray*}
Using these computations
%and recalling that $p\simeq 12\EC \GO^2$,
we arrive at
\begin{equation}\label{eq:Poisonb2}
\{\mathcal{L}_0,\mathcal{L}^*_1\}=-\frac{15\pi\EC\mathcal{L}^*_1 d\sin\AO}{8\GO^3B^2}
\end{equation}
with
%AMADEU:\begin{multline}\label{eqn:d}
%d:=B^2O\left(1+O\left(\EC^2,\GO^{-1}\right)\right)\\-\frac{4p}{5\EC\GO}\left(1+qp-4q\cos\AO
%+O\left(\EC^3\GO^4\right)\right)
%+\left(1-\frac{25\EC}{4\GO^2}\cos\AO+O\left(\GO^{-4},\EC^2\right)\right).
%\end{multline}
\textr{
\begin{multline}\label{eqn:d}
d:=B^2\left(1+O\left(\GO^{-1}\right)\right)
-\frac{4p}{5\EC\GO}\left(1+qp-2q\cos\AO
%-\frac{48 G}{5}\left(1+qp-2q\cos\AO
+O\left(\GO^{-3}, \EC \GO^{-1},\EC^2\GO^2,\EC^3\GO^4\right)\right)\times\\
\left(1-\frac{25\EC}{4\GO^2}\cos\AO \left(1+O\left(\GO^{-4},\EC^2\right)\right)+O\left(\GO^{-4},\EC^2\right)\right).
\end{multline}
}

%\begin{equation}\label{eq:Poisonb2}
% \begin{array}{rcl}
%\{\mathcal{L}_0,\mathcal{L}^*_1\}
%\begin{equation}\label{eq:Poisonb2}
% \begin{array}{rcl}
%\{\mathcal{L}_0,\mathcal{L}^*_1\}  &=&\frac{\EC \pi ^2}{2 \mathcal{L}^*_1}\sin \alpha \EXP ^{-2\GO ^3/3}
%\left[18 \GO^{-3}-18 \EC ^2 +O( \GO^{-4}, \EC \GO^{-2}, \EC^2, \EC ^3 \GO)\right.\\
%&+&\left.
%45 \EC \GO^{-2} \cos \AO(1+O( \GO^{-4}, \EC \GO^{-2}, \EC ^2))\right] \\
%%&=&\frac{\EC \pi ^2}{2 \GO ^3 \mathcal{L}^*_1}\sin \alpha \EXP ^{-2\GO ^3/3}
%%\left(d+O( \GO^{-1}, \EC \GO, (\EC \GO)^3)\right)
%&=&\frac{\EC \pi ^2}{2 \GO ^3 \mathcal{L}^*_1}\sin \alpha \EXP ^{-2\GO ^3/3}
%\left(d_1+d_2\cos \AO\right)
%\end{array}
%\end{equation}
%with
%$$
%d_1= 18-18\EC^2\GO ^3+O( \GO^{-1}, \EC \GO, \EC^2\GO^3, \EC ^3 \GO^4), \quad d_2=45\EC\GO +O( \EC\GO^{-3}, \EC^2 \GO^{-1}, \EC ^3\GO)
%$$
%where
%$$
%d=18 -18 \EC ^2 \GO^3+45 \EC \GO \cos \AO
%$$

%\begin{equation}\label{eq:Poisonb2}
%-2 \{\mathcal{L}_0,\mathcal{L}^*_1\}=-2 \{\widehat{\mathcal{L}}_0,\widehat{\mathcal{L}}^*_1\}+\mathcal{E}_J
%\end{equation}
%where
%\[
 %-2\{\widehat{\mathcal{L}}_0,\widehat{\mathcal{L}}^*_1\}=
%\frac{-\widehat{\mathcal{L}}^*_1}{\widehat{B}^2}\frac{3\pi p\sin \AO }{\GO ^4} d
%\]
%with
%\[
%d=\Biggl[1-\frac{25}{4}\frac{\EC \GO }{\GO ^3}\cos\AO -\frac{5}{48}\frac{B^2}{\GO }
%\left[1+\frac{1}{2\GO ^3}-\frac{-\cos \AO  +p}{B^2} \cdot \frac{24\EC \GO }{\GO ^2}\right]\Biggl].
%\]
%and a small error term
%\[
%\mathcal{E}_J=O\Bigl(\GO ^{-5}+\EC \GO ^{-3}+\EC ^2\GO ^3+p\EC ^2\GO ^4\bigl(1+p(\EC \GO +\GO ^{-6})\bigl)\Bigl)\GO ^{-1/2}\EXP ^{-\GO ^3/3}
%\]

\subsection{Strategy for diffusion}%\label{stratdiff}
The previous computations \eqref{eq:Poisonb}, \eqref{eq:Poisonb2} as well as   Lemma  \ref{Bnotzero} tell us that the level curves of
$\mathcal{L}^*_+$ and $\mathcal{L}^*_-$ are transversal in the region $\GO \geq C> 32$ and $\EC \GO \leq c< 1/8$,
except for the three curves $\alpha=0$, $\alpha=\pi$ and $d=0$, which are transversal to any of these level curves of
$\mathcal{L}^*_+$ and $\mathcal{L}^*_-$, see figure~\ref{LevelSetsPicture}.
Indeed, this is clear for the lines $\alpha=0$ and $\alpha=\pi$, and the same happens for
the curve $d=0$ using the expression of $d$ given in~\eqref{eqn:d} which implies
\[
%\GO=\left(\frac{2}{11\EC^2}\right)^{1/3}\left(1+\frac{95}{12} \left(\frac{2}{11}\right)^{4/3} \EC^{1/3}\cos\AO+O\left(\EC^{2/3}\right)\right).
\GO=\left(\frac{2}{11\EC^2}\right)^{1/3}\left( 1+K \EC^{1/3}\cos\AO+O\left(\EC^{2/3}\right)\right).
\]
with $K\ne 0$.
\kk{improve\\figures}

\begin{figure}[t]
\centering
\includegraphics[width=3.5in]{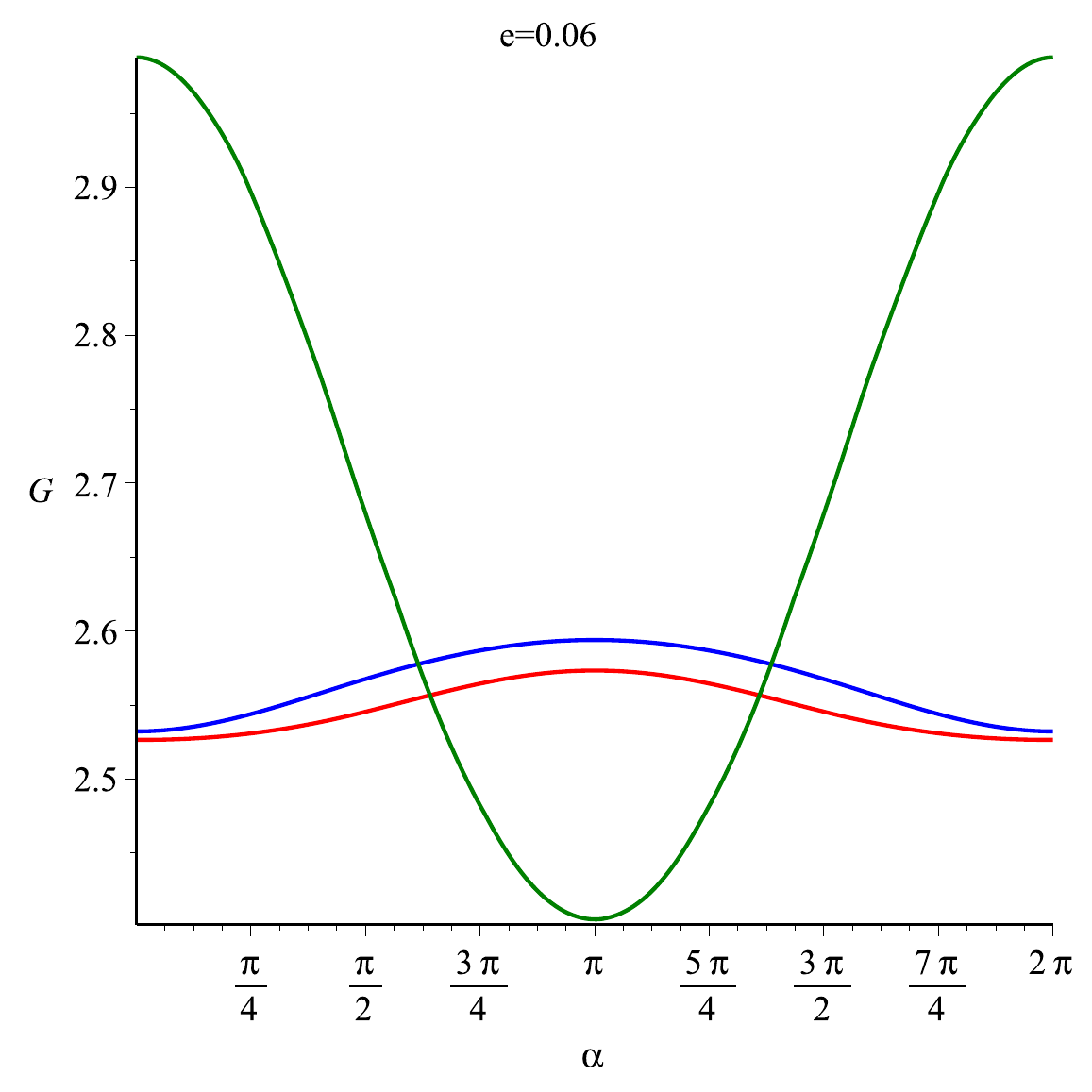}
\caption{Illustration of the level Sets of $\mathcal{L}^*_+$ ($\mathcal{L}^*_-$) in Blue (Red) and $d=0$ in Green}
\label{LevelSetsPicture}
\end{figure}

Thus, apart from these three curves, at any point in the
plane $(\AO ,\GO )$ the slopes $d\GO /d\AO $ of the level curves of $\mathcal{L}^*_+$ and $\mathcal{L}^*_-$
are different, and we are able to choose which level
curve increases more the value of $G$, when both slopes are positive, or alternatively,
to choose the level curve which decreases less the value of $G$, when both slopes are negative
(see Figure~\ref{TransitionPicture}). In the same way, we can find trajectories along which
the angular momentum performs arbitrary excursions. More precisely,
given an arbitrary finite sequence of values $G_i$, $i=1,\dots,n$
we can find trajectories which satisfy $G(T_i)=G_i$, $i=1,\dots,n$.

Strictly speaking, this mechanism given by the application of scattering maps produce indeed pseudo-orbits, that is,
heteroclinic connections between different periodic orbits in the infinity manifold which are commonly known as transition
chains after Arnold's pioneering work \cite{Arnold64}.
The existence of true orbits of the system which follow closely these transition chains relies on
shadowing methods, which are standard for partially hyperbolic periodic orbits (the so-called whiskered tori in the literature) lying on a normally hyperbolic
invariant manifold (NHIM)  \cite{Moeckel02,Moeckel07,GideaL06,GideaLS14}.
Such shadowing methods are equally applicable in our case as it is proven in \cite{GuardiaMSS17}, where we have an infinity
manifold $\tilde \Lambda_\infty$ which is only topologically equivalent to a
NHIM.

%The existence of true orbits of the system which follow closely these transition chains relies on
%shadowing methods, which are standard for partially hyperbolic periodic orbits (the so-called whiskered tori in the literature) lying on a normally hyperbolic
%invariant manifold (NHIM). Such shadowing methods are equally applicable in our case, where we have an infinity
%manifold $\tilde \Lambda_\infty$ which is only topologically equivalent to a NHIM (see \cite{Moeckel02,Moeckel07,GideaL06,GideaLS14}) and \cite{GuardiaMSS17}.

\begin{figure}
\centering
\includegraphics[width=4in]{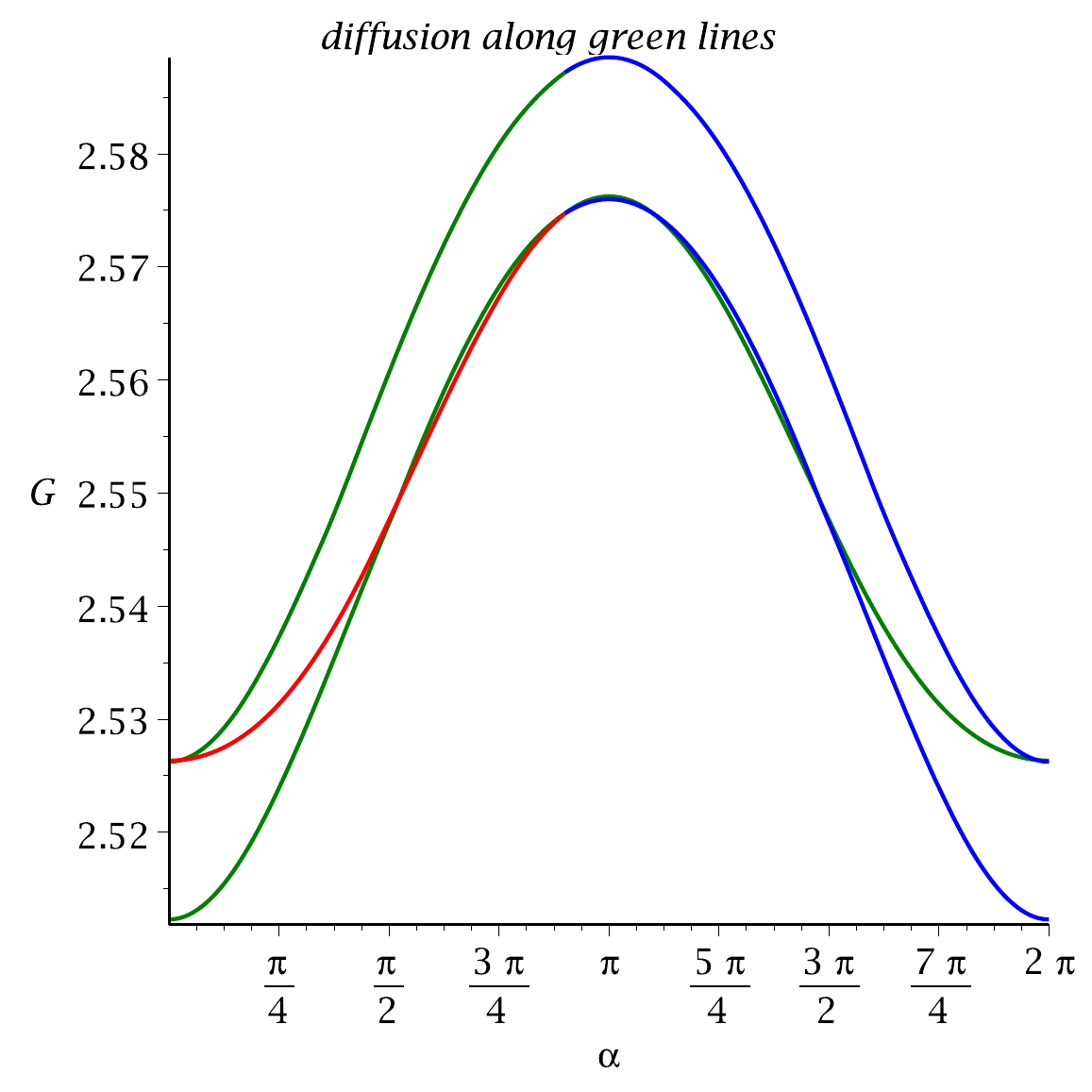}
\caption{Zone of diffusion: Level curves of $\mathcal{L}^{*}_{+}$ ($\mathcal{L}^{*}_{-}$) in blue (red)
and diffusion trajectories in green.
}
\label{TransitionPicture}
\end{figure}

With all these elements, we can finally state our main result
\begin{theorem}\label{Thm:Main}
Let $G_1^*<G_2^*$ large enough and $\EC >0$, $\mu>0$ small enough.
More precisely $C\leq G_1^*<G_2^*\leq c/\EC$ and $0<\mu < \mu^*=\frac{c}{C} \EXP ^{-(8\EC )^{-3} /3}$,
for $C>32$ large enough and $c<1/8$ small enough.
Then, for any finite sequence of values $G_i\in(G_1^*,G_2^*)$, $i=1,\dots,n$,
there exists a trajectory of the \textb{\RPETBP} such that
$G(T_i)=G_i$, $i=1,\dots,n$ for some $0<T_i<T_{i+1}$. In particular,
for any two values $G_1<G_2\in(G_1^*,G_2^*)$,
there exists a trajectory such that
$G(0)<G_1$, and $G(T)>G_2$ for some time $T>0$.
\end{theorem}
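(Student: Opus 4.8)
The plan is to assemble Theorem~\ref{Thm:Main} from the ingredients already prepared: the two scattering maps $S_\pm$ of~\eqref{Spm}, the transversality of the level curves of $\mathcal{L}^*_+$ and $\mathcal{L}^*_-$ established above, and the shadowing result of~\cite{GuardiaMS15}. First I would fix the domain $\mathcal{D}=\{(\AO,\GO): 32\le \GO\le 1/(8\EC)\}$ and note that on $\mathcal{D}$, away from the exceptional curves $\AO=0$, $\AO=\pi$, $d=0$ (and the small neighbourhood of $(\AO=0,\GO=\GO^*)$ where the critical points $s^*_{0,\pm}$ degenerate), both $S_+$ and $S_-$ are well-defined and, up to $O(\mu^2)$, equal to the time-$(-\mu)$ maps of the autonomous Hamiltonians $\mathcal{L}^*_+$, $\mathcal{L}^*_-$ respectively. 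Since $\{\mathcal{L}^*_+,\mathcal{L}^*_-\}\ne0$ off these curves, at every such point the two level curves through it are transversal; hence one of the two vector fields $X_{\mathcal{L}^*_\pm}$ has a strictly larger $\partial_\GO$-component than the other (after orienting so that $G$ increases along the flow). This is the geometric heart of the mechanism.

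Second, I would build the pseudo-orbit. Given the target values $G_1<G_2$ in $(G_1^*,G_2^*)$ (or a finite sequence $G_i$), I partition the $\GO$-interval into small steps and, on each step, choose whichever of $S_+$, $S_-$ moves $\GO$ in the desired direction fastest; applying that scattering map $N$ times for suitable $N=N(\mu)$ advances $\GO$ by a definite amount while $\AO$ drifts in a controlled way. One must avoid the exceptional set: since the curves $\AO=0,\pi,\{d=0\}$ are themselves transversal to the level curves of $\mathcal{L}^*_\pm$, the orbit crosses them cleanly rather than running along them, and near the puncture $(\AO=0,\GO^*)$ one simply routes around it (the region has $\GO^*\approx(12\EC)^{-1/2}$, which for $\EC\GO\le1/8$ sits inside the working band, so a detour in $\AO$ of order one suffices). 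Concatenating these moves yields a finite sequence of points $\tilde x_0,\tilde x_1,\dots,\tilde x_M\in\tilde\Lambda_\infty$ with $G(\tilde x_0)<G_1$, $G(\tilde x_M)>G_2$ (and intermediate points realizing the $G_i$), each consecutive pair joined by a true homoclinic orbit to $\tilde\Lambda_\infty$ coming from Proposition~\ref{theprop} — i.e.\ a transition chain of periodic orbits in the infinity manifold.

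Third, I would invoke the shadowing theorem of~\cite{GuardiaMS15} to pass from this transition chain to a genuine trajectory of the ERTBP. The inner dynamics on $\tilde\Lambda_\infty$ is the trivial rotation~\eqref{orbper} in $\SO$, which in particular is recurrent, so the standard obstruction (needing the inner map to return near the next homoclinic excursion's footpoint) is automatically met; the only non-standard feature is that $\tilde\Lambda_\infty$ is merely topologically a NHIM, with power-like rather than exponential normal behaviour~\eqref{eqn:power-likeAsymp}, and~\cite{GuardiaMS15} is precisely the adaptation of the shadowing lemma to that setting. This produces an orbit $z(t)$ with $G(T_i)=G_i$ at an increasing sequence of times, and in particular $G(0)<G_1$, $G(T)>G_2$. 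Finally I would check the hypotheses line up: $\GO\ge32$ and $\EC\GO\le1/8$ are exactly the conditions of Theorem~\ref{thepropositionmain} and Proposition~\ref{coslike}, and $0<\mu<\mu^*=\mathrm{e}^{-(8\EC)^{-3}/3}$ guarantees $|\mu|<|\mathcal{L}^*_1|=O(\GO^{-1/2}\mathrm{e}^{-\GO^3/3})$ uniformly on the band, so the $O(\mu^2)$ remainders in~\eqref{Spm} never swamp the leading drift and switching scattering maps remains legitimate.

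The main obstacle is the last step: controlling the shadowing quantitatively when the hyperbolicity is only power-like and the splitting $\mathcal{L}^*_1$ is exponentially small in $\GO$, so that the "speed" of diffusion is exponentially slow and $\mu$ must be taken exponentially small to keep the perturbative picture valid — this is what forces the restrictive relation~\eqref{Boundmu} and is the reason the argument is deferred to~\cite{GuardiaMS15} rather than carried out inline. A secondary technical point is verifying that the finitely many exceptional curves can always be avoided while still being free to pick whichever scattering map increases $G$ fastest; this uses their mutual transversality with the $\mathcal{L}^*_\pm$-level curves, established in the computation of $\{\mathcal{L}_0,\mathcal{L}^*_1\}$ above.
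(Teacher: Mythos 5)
Your proposal is correct and follows essentially the same route as the paper: the paper's own justification of Theorem~\ref{Thm:Main} is precisely the preceding ``Strategy for diffusion'' discussion, namely switching between the two scattering maps $S_\pm$ according to which level curve of $\mathcal{L}^*_\pm$ advances $G$ faster (using their transversality away from $\alpha=0,\pi$, $d=0$ and the puncture at $(\alpha=0,G^*)$), building the resulting transition chain of periodic orbits in $\tilde\Lambda_\infty$, and deferring the passage from pseudo-orbits to true orbits to the shadowing result of \cite{GuardiaMS15} adapted to the TNHIM setting. Your additional remarks on routing around the degeneracy point and on the role of the bound $\mu<\mu^*$ in \eqref{Boundmu} are consistent with the paper's (admittedly informal) treatment.
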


\section{Computation of the Melnikov potential: Proof of Theorem \ref{thepropositionmain}}\label{EstMeliPot}
The main difficulty to compute the Melnikov potential is that it is given by an integral~\eqref{cL}
where the coordinates of the separatrix $x_{\HC}$ and $\alpha_{\HC}$ are given implicitly~\eqref{homoclinic:h}
in terms of the time $t$ through the variable $\tau$~\eqref{cambiotau}, whereas
$\RO $ and $f$ are given in terms of $\SO +t$ through the differential equation~\eqref{f} defining the true anomaly $f$.
To evaluate the above Melnikov potential, we will compute its Fourier Cosine series~\eqref{eqn:LFourier1} in the angles
$\SO,\AO$.
\textr{We will detect that there are only five dominant harmonics, $L_{0,0}$, $L_{0,1}$,
$L_{1,-1}$, $L_{1,-2}$ and $L_{1,-3}$, so we will estimate them and bound all the rest.}

The plan of this proof is thus divided in different parts. In subsection~\ref{MPFourier} we Fourier expand the
Melnikov potential $\mathcal{L}$ to find that each of its harmonics $L_{q,k}$ is given by a series in terms of some
constants $\CT _{q}^{n,m}$ and integrals $N(q,m,n)$. General upper bounds for these constants and integrals are given in subsection~\ref{MPBounds},
which provide the upper bounds $B_{q,k}$ for the harmonics $L_{q,k}$ announced in Proposition~\ref{boundLqk}.
Since the upper bounds $B_{q,k}$ are exponentially small for large $G$ and $q\geq 1$, we split the
Fourier expansion~\eqref{eqn:LFourier1} as
\[
 \mathcal{L}=\mathcal{L}_{0}+\mathcal{L}_{1}+\mathcal{L}_{\geq2}
\]
where $\mathcal{L}_{0}$, $\mathcal{L}_{1}$ contain the harmonics of $\mathcal{L}$ of order $0$, $1$ in $\SO $, respectively,
whereas $\mathcal{L}_{\geq2}$ contain the harmonics of higher order, which are readily bounded.
Subsection~\ref{MPAsymp} contains an asymptotic expression for the integrals $N(q,m,n)$ which are necessary for the computation of
$\mathcal{L}_{1}$. Finally, the subsections~\ref{MP1} and~\ref{MP0} are devoted to the computation
to the harmonics of $\mathcal{L}_{1}$, and $\mathcal{L}_{0}$,
respectively, estimating, for each order, the two most dominant ones, and bounding all the rest.

\subsection{Fourier expansion of the Melnikov potential}
\label{MPFourier}

The next Proposition gives formulae for the Fourier coefficients~\eqref{eqn:LFourier1} of the Melnikov potential~\eqref{cL}.
For any integer $n$, $m$, we will use the Fourier expansion of the function
\begin{equation}\label{sumapinyol}
\RO (f(t))^{n}\ \EXP ^{imf(t)} =\sum _{q\in\mathbb{Z}}\CT ^{n,m}_q  \EXP ^{i q t}
\end{equation}
which can be found in \cite{MartinezP94} and \cite[p.~204]{Wintner41}.
Since $\RO $ is an even function and $f$ is and odd function, one readily sees that
the above coefficients are real and indeed they satisfy
\begin{equation*}%\label{simetriesc}
\CT _{-q}^{n,-m}=\CT _{q}^{n,m}=\overline{\CT _{q}^{n,m}} .
\end{equation*}
Once these coefficients $\CT _q^{n,m}$ are introduced we can give explicit formulae for the Fourier
coefficients of the Melnikov potential $\mathcal{L}$.
\begin{proposition}\label{RLFourierLq}
 The Melnikov potential given in~\eqref{cL} or in~\eqref{eqn:LFourier1} can be written as
\begin{equation}\label{LFourierLq}
\mathcal{L}=\sum_{q\in\mathbb{Z}}L_{q}\EXP ^{iq\SO }, \quad \text{where}\quad
L_q=\sum_{k\in \mathbb{Z}}L_{q,k}\EXP ^{ik\AO },
\end{equation}
with
\begin{equation}\label{FourierLqkN}
\begin{aligned}
 L_{q,0}&=\sum_{l\geq 1}\CT _{q}^{2l,0}N(q,l,l) &\\%\label{FourierLqkN:00}\\
 L_{q,1}&=\sum_{l\geq 2}\CT _{q}^{2l-1,-1}N(q,l-1,l)&\\%\label{FourierLqkN:q1}\\
 L_{q,-1}&=\sum_{l\geq 2}\CT _{q}^{2l-1,1}N(q, l,l-1)&\\%\label{FourierLqkN:qm1}\\
 L_{q,k}&=\sum_{l\geq k}\CT _{q}^{2l-k,-k}N(q,l-k,l)&\text{for }k\geq 2\\%\label{FourierLqkN:qk}\\
 L_{q,-k}&=\sum_{l\geq k}\CT _{q}^{2l-k,k}N(q,l,l-k)&\text{for }k\geq 2%\label{FourierLqkN:qmk}
\end{aligned}
\end{equation}
and
\begin{equation}
 N(q,m,n)=\frac{2^{m+n}}{\GO ^{2m+2n-1}} \binom{-1/2}{m}\binom{-1/2}{n}
 \int_{-\infty}^{\infty}\frac{ \EXP ^{iq (\tau+\tau^{3}\!/3) \,\GO ^{3}\!/2}}{(\tau-i)^{2m}(\tau+i)^{2n}}
 d\tau\label{Nqmn}
\end{equation}
\end{proposition}

\begin{proof}
We write Melnikov potential \eqref{cL} as:
\begin{equation}\label{LL1+I}
\mathcal{L}= \mathcal{L}_{\text{main}}+\int_{-\infty}^{\infty}\Biggl[\Bigl(\frac{x_{\HC}^{2}}{2} \Bigl)^{2}
\RO \cos (\alpha_{\HC} -f)-\frac{x_{\HC}^{2}}{2}\Biggl] \,dt ,
\end{equation}
where
$$
\mathcal{L}_{\text{main}}=\int_{-\infty}^{\infty}
\frac{x_{\HC}^{2}}{\bigl[4+x_{\HC}^{4}\RO ^{2}+4x_{\HC}^{2}\RO \cos (\alpha_{\HC}- f)\bigl]^{1/2}}\,dt
$$
can be written as
\begin{multline*}%\label{L1tilde}
\mathcal{L}_{\text{main}}=\int_{-\infty}^{\infty}\frac{x_{\HC}^{2}}{2}\left(1+\frac{x_{\HC}^{2}}{2}
\RO \left(f(t+\SO )\right)\EXP ^{i(\alpha_{\HC}-f(t+\SO ))}\right)^{-1/2}\\
\cdot \left(1+\frac{x_{\HC}^{2}}{2}
\RO \left(f(t+\SO )\right)\EXP ^{-i(\alpha_{\HC}-f(t+\SO ))}\right)^{-1/2}dt .
\end{multline*}
Using the expansion for $z=\dfrac{x_{\HC}^{2}}{2}
\RO \left(f(t+\SO )\right)\EXP ^{\pm i(\alpha_{\HC}-f(t+\SO ))}$
$$
(1+z)^{-\frac{1}{2}}=\sum_{l=0}^{\infty}\binom{-1/2}{l}z^l
$$
\kk{Write\\$|z|<1$?}
which, by~\eqref{homoclinic:h1}, \eqref{r0f}, is valid as long as $|z|=|x_{\HC}^{2}\RO/\!2|\leq 2(1+\EC)/G^2<1$, we get
\begin{equation*}%\label{L1}
\mathcal{L}_{\text{main}}=\sum_{k\geq 0}\sum_{l\geq k}\tilde L_{k}^{l}+\sum_{k< 0}\sum_{l\leq k}\tilde S_{k}^{l}
\end{equation*}
where
\begin{align*}
\tilde L_{k}^{l}&=\frac{1}{2^{2l-k+1}}\ \binom{-1/2}{l-k}\ \binom{-1/2}{l}\ \int_{-\infty}^{\infty}x_{\HC}^{4l-2k+2}\
[\RO (f(t+\SO ))]^{2l-k}\ \EXP ^{ik\alpha_{\HC}}\EXP ^{-ikf(t+\SO )}\ dt;\quad  0\leq k\leq l\\
\tilde S_{k}^{l}&=\frac{1}{2^{-2l+k+1}}\binom{-1/2}{k-l}\binom{-1/2}{-l}\int_{-\infty}^{\infty}x_{\HC}^{-4l+2k+2}
[\RO (f(t+\SO ))]^{-2l+k}\EXP ^{ik\alpha_{\HC}}\EXP ^{-ikf(t+\SO )}dt;\ \ l\leq k\leq -1 .
\end{align*}
With these expressions is easy to see that $\tilde L_0^0$ cancels out the last term in the integral \eqref{LL1+I}
and that $\tilde L_1^1+\tilde S_{-1}^{-1}$ cancels the cosine term, and so
\begin{equation}\label{L1tilde1}
\mathcal{L}=\sum_{l\geq 1}\tilde L_{0}^{l}+\sum_{l\geq 2}\tilde L_{1}^{l}+
\sum_{l\leq -2}\tilde S_{-1}^{l}+\sum_{k\geq 2}\sum_{l\geq k}\tilde L_{k}^{l}+\sum_{k\leq -2}\sum_{l\leq k}\tilde S_{k}^{l} .
\end{equation}
Now  we  perform the change of variable
\[
t=\frac{\GO ^3}{2}\Bigl(\tau+\frac{\tau^3}{3}\Bigl), \qquad
dt=\frac{\GO ^3}{2}(1+\tau^2)\, d\tau
\]
introduced in \eqref{cambiotau}, and we use the formulae for $x_{\HC}$ and
$\alpha_{\HC}$ given in \eqref{homoclinic:h1} and
\eqref{homoclinic:h2}. In particular we will use that
\[
x_{\HC}^2=\frac{4}{\GO ^2(1+\tau ^2)},  \qquad
x_{\HC}^2\, dt= 2\GO\, d \tau, \qquad
\EXP ^{i\alpha_{\HC}}=\frac{\tau-i}{\tau+i}\,\EXP ^{i\AO },
\]
and the expansion in Fourier series given in \eqref{sumapinyol} to obtain
\begin{align}
\tilde L_{k}^{l}&=\EXP ^{ik\AO }\frac{2^{2l-k}}{\GO ^{4l-2k-1}}\ \binom{-1/2}{l}\binom{-1/2}{l-k}
\sum_{q\in \mathbb{Z}}\EXP ^{iq\,\SO } \CT _{q}^{2l-k,-k} \int_{-\infty}^{\infty}
\frac{  \EXP ^{iq (\tau+\tau^{3}\!/3) \,\GO ^{3}\!/2}}{(\tau-i)^{2(l-k)} (\tau+i)^{2l} } d\tau
\nonumber\\
&= \EXP ^{ik\AO } \sum_{q\in \mathbb{Z}}\EXP ^{iq\,\SO } \CT _{q}^{2l-k,-k} N(q,l-k,l),
\qquad  0\leq k\leq l; \label{series:stilde1}\\
\tilde S_{k}^{l}&=\EXP ^{ik\AO }\frac{2^{-2l+k}}{\GO ^{-4l+2k-1}} \binom{-1/2}{-l}
\binom{-1/2}{k-l}\sum_{q\in \mathbb{Z}}\EXP ^{iq\,\SO }\CT _{q}^{-2l+k,-k}
\int_{-\infty}^{\infty}\frac{  \EXP ^{iq (\tau+\tau^{3}\!/3) \,\GO ^{3}\!/2}}{(\tau-i)^{-2l}(\tau+i)^{2(k-l)}}
d\tau\nonumber \\
&= \EXP ^{ik\AO } \sum_{q\in \mathbb{Z}}\EXP ^{iq\,\SO } \CT _{q}^{-2l+k,-k} N(q,-l,k-l),
\qquad l\leq k\leq -1. \label{series:stilde2}
\end{align}
Substituting now equations \eqref{series:stilde1} and  \eqref{series:stilde2} into the expansion \eqref{L1tilde1}
we get
\begin{align*}
 \mathcal{L}&=\ \ \sum_{q\in \mathbb{Z}}\EXP ^{iq\SO }\sum_{l\geq 1} \CT _{q}^{2l,0} N(q,l,l)
+\sum_{q\in \mathbb{Z}}\EXP ^{i(q\SO +\AO )} \sum_{l\geq 2}
\CT _{q}^{2l-1,-1} N(q,l-1,l)\\
&\phantom{=}+\sum_{q\in \mathbb{Z}}\EXP ^{i(q\SO -\AO )} \sum_{l\leq -2}
\CT _{q}^{-2l-1,1} N(q,-l,-l-1)\\
&\phantom{=}+\sum_{q\in \mathbb{Z}}\sum_{k\geq 2}\EXP ^{i(q\SO +k\AO )} \sum_{l\geq k}
\CT _{q}^{2l-k,-k} N(q,l-k,l)\\
&\phantom{=}+\sum_{q\in \mathbb{Z}}\sum_{k\leq -2}\EXP ^{i(q\SO +k\AO )} \sum_{l\leq k}
\CT _{q}^{-2l+k,-k} N(q,-l,k-l).
\end{align*}
Changing now the indexes $l\to -l$ and $k\to -k$ in the third and fifth terms we obtain the desired
formulae~\eqref{FourierLqkN} for the Fourier coefficients $L_{q,k}$.
\end{proof}

\subsection{General upper bounds}
\label{MPBounds}
In view of Proposition~\ref{RLFourierLq} and formulae~\eqref{FourierLqkN},
to compute the dominant part of the Melnikov potential and obtain
effective bounds of the errors we will need to estimate the constants
$\CT _{q}^{n,m}$ defined in \eqref{sumapinyol} and the integrals $N(q,m,n)$
defined in \eqref{Nqmn} for $q\ge 0$ and only for indexes $m,n$ satisfying $n\geq 0$, $m\leq n+1$.
Alternatively to~\eqref{r0f},
it will be very convenient to express the distance $\RO $ between the primaries as
\begin{equation}
 \RO =1-\EC \cos E\label{rEwint}
\end{equation}
in terms of the \textit{eccentric anomaly} $E$, given by the Kepler equation~\cite[p.~194]{Wintner41}
\begin{equation}
t=E-\EC \sin E\label{tEwint}.
\end{equation}
We obtain a general upper bound for the constants~$\CT _{q}^{n,m}$, where the
dominant order in $\EC $ appears explicitly.
\begin{proposition}\label{boundcqmn}
Let $n,m,q\in \mathbb{Z}$, $n,q\geq 0$, $m\leq n+1$.
Then the Fourier coefficients $\CT _{q}^{n,m}$ defined in \eqref{sumapinyol} satisfy
\begin{equation}
\label{Boundscqmn}
\left|\CT _{q}^{n,m}\right|\leq
\begin{cases}
2^{q+n+1}\EXP ^{q\sqrt{1-\EC ^2}}\EC ^{|m-q|} &\mbox{$m\geq 0$}\\
2^{n+1}\EXP ^{q\sqrt{1-\EC ^2}}\dfrac {\EC ^{q-m}}{(1-\EC ^2)^{-m/2}}
%(1+\EC )^{n+1}
&\mbox{$m\leq -1$}
\end{cases}
\end{equation}

\end{proposition}
\begin{proof}
In the integral formula for the Fourier coefficients
\begin{equation}\label{cqnmfourier}
 \CT _{q}^{n,m}=\frac{1}{2\pi}\int _{0}^{2\pi} \RO ^n \EXP ^{i m f}\EXP ^{-iqt}dt
\end{equation}
we change the variable of integration to the eccentric anomaly~\eqref{tEwint} ($dt=\RO \,dE$) to get
\begin{equation}\label{cqnmbechange}
 \CT _{q}^{n,m}=\frac{1}{2\pi}\int _{0}^{2\pi} \left(\RO \EXP ^{i f}\right)^{m} \RO ^{n+1-m}\EXP ^{-iqt}dE .
\end{equation}

To compute $ \CT _{q}^{n,m}$ from~\eqref{cqnmbechange} we will use the identity (see \cite[p.~202]{Wintner41})
\[
\left(\RO \EXP ^{i f}\right)^{\frac{1}{2}}= a \EXP ^{iE/2} - \frac{\EC }{2a} \EXP ^{-iE/2},
\qquad a=\frac{\sqrt{1+\EC}+\sqrt{1-\EC}}{2}
\]
which readily implies
\begin{subequations}\label{rtreifa}
\begin{gather}
\label{rtreifa1}
\RO \EXP ^{i f}=a^2\EXP ^{iE}-\EC +
\frac{\EC ^2}{4a^2}\EXP ^{-iE}=(a\EXP ^{iE/2}-\frac{\EC }{2a}\EXP ^{-iE/2})^2,\\
\label{rtreifa2}
a^2+\frac{\EC^2}{4a^2}=1, \quad a^2-\frac{\EC^2}{4a^2}=\sqrt{1-\EC^2},
\quad a^4 +\frac{\EC ^2}{16a^4}=1-\EC ^2, \quad
\quad a^4 -\frac{\EC ^2}{16a^4}=\sqrt{1-\EC^2}.
\end{gather}
\end{subequations}

To bound the integral~\eqref{cqnmfourier}  for $m\geq 0$ we will consider two different cases:
$0\leq q\leq m$ and $0\leq m < q$.
Let us first consider the case $ 0\leq q\leq m$.
By the analyticity and periodicity of the integral we change the path of integration from
$\Im( E)=0$ to $\Im E = \ln (2a^2/\EC )$
\begin{equation}\label{Eq:UpperPath}
E=u+i\ln \left(\frac{2a^2}{\EC }\right)\qquad u\in[0,2\pi]
\end{equation}
so that
$$
\EXP ^{iE}=\EXP ^{iu-\ln(2a^2\!/\EC )}=\frac{\EC }{2a^2}\EXP ^{iu}
$$
and then, by~\eqref{rEwint},~\eqref{tEwint} and~\eqref{rtreifa1},~\eqref{rtreifa2},
\begin{align*}
\RO \EXP ^{i f}&
=\frac{\EC }{2}\EXP ^{iu}-\EC +\frac{\EC }{2}\EXP ^{-iu}
=\EC (\cos u-1)\\%\label{rufu}\\
\RO&=1-\frac{\EC }{2}\left(\frac{\EC }{2a^2}\EXP ^{iu}+ \frac{2a^2}{\EC }\EXP ^{-iu}\right)
=1-\frac{\EC ^2}{4a^2}\EXP ^{iu}- a^2\EXP ^{-iu}\\
&=1-\left(\frac{\EC ^2}{4a^2}+a^2\right)\cos u +i \left(a^2-\frac{\EC ^2}{4a^2}\right)\sin u
=1-\cos u + i \sqrt{1-\EC ^2} \sin u \\%\label{ru}\\
\EXP ^{-it}&
%=\EXP ^{-i(E-\EC \sin E)}
=\frac{2a^2\EXP ^{-iu}}{\EC }
\exp\left(\frac{\EC ^2}{4a^2}\EXP ^{iu}- a^2 \EXP ^{-iu}\right)
 = \frac{2a^2\EXP ^{-iu}}{\EC }
 \exp\left(-\sqrt{1-\EC ^2} \cos u + i \sin u\right).
\end{align*}
Therefore along the complex path~\eqref{Eq:UpperPath} we have the following bounds
\begin{align*}
\left|\RO \EXP ^{i f}\right|&= \EC (1-\cos u)\leq 2 \EC\\
|\RO|&=\sqrt{\left(1-\cos u\right)^2+\left(1-\EC ^2\right)\sin^2 u}
=\sqrt{2\left(1-\cos u\right)-\EC ^2\sin^2 u}\leq 2\\
\left|\EXP ^{-it}\right|&=
\frac{2a^2}{\EC } \exp\left(-\sqrt{1-\EC ^2} \cos u \right)
\leq \frac{2a^2}{\EC } \EXP ^{\sqrt{1-\EC ^2}}.
\end{align*}
Since $2 a^2 \leq 2$, substituting these bounds in~\eqref{cqnmbechange} we find
directly the desired result~\eqref{Boundscqmn} for $0\leq q \leq m$.

For the the case $ m <q$ we now perform the change of the integration variable through
\begin{equation}\label{Eq:LowerPath}
E=v-i\ln \left(\frac{2a^2}{\EC }\right),\qquad v\in[0,2\pi]
\end{equation}
so that
$$
\EXP ^{iE}=\EXP ^{iv+\ln(2a^2\!/\EC)}=\frac{2a^2}{\EC }\EXP ^{iv}
$$
and then again, by~\eqref{rEwint},~\eqref{tEwint} and~\eqref{rtreifa1},~\eqref{rtreifa2},
\begin{align*}
\RO \EXP ^{i f}&=
\frac{2a^4}{\EC }\EXP ^{iv}-\EC +\frac{\EC ^3}{8a^4}\EXP ^{-iv}=
\frac{2}{\EC}\left( \left(a^4 + \frac{\EC ^4}{16 a ^4}\right)\cos v - \frac{\EC ^2}{2}
+i \left(a^4 - \frac{\EC ^4}{16 a ^4}\right)\sin v\right)\\
&=\frac{2}{\EC} \left(\cos v -\frac{\EC ^2}{2}(1 + \cos v) + i \sqrt{1-\EC ^2} \sin v\right)
\\%\label{rufu}\\
\RO&=1- a^2\EXP ^{iv}- \frac{\EC ^2}{4a^2}\EXP ^{-iv}
=1-\cos v - i \sqrt{1-\EC ^2} \sin v \\%\label{ru}\\
\EXP ^{-it}&
=\frac{\EC \EXP ^{-iv}}{2a^2}
\exp\left(a^2 \EXP ^{iv}- \frac{\EC ^2}{4a^2} \EXP ^{-iv}\right)
 = \frac{\EC \EXP ^{-iv}}{2a^2}
 \exp\left(\sqrt{1-\EC ^2} \cos v + i \sin v\right).
\end{align*}
Therefore
$$
|\RO \EXP ^{i f}|^2= \frac{2}{\EC}\left(1-\frac{\EC ^2 (\cos v+1)}{2}\right)
$$
and consequently, using that $2 a^2 \ge 1$,
along the complex path~\eqref{Eq:LowerPath} we have the following bounds
\begin{equation}\label{Eq:UpperAndLowerBounds}
\frac{2}{\EC}(1-\EC ^2)^{1/2}\leq \left|\RO \EXP ^{i f}\right|\leq \frac{2}{\EC}(1+\EC ^2)^{1/2} \le
\frac{4}{\EC}, \quad
|\RO|\leq 2, \quad
\left|\EXP ^{-it}\right| \leq \frac{\EC }{2a^2} \EXP ^{\sqrt{1-\EC ^2}}\le
\EC  \EXP ^{\sqrt{1-\EC ^2}}.
\end{equation}
%Since $2 a^2 \geq 1$,
Substituting the above upper bounds~\eqref{Eq:UpperAndLowerBounds} in~\eqref{cqnmbechange}
we find the desired result~\eqref{Boundscqmn} for $0\leq m <q$.
In the case $m\leq -1$ we use the above lower bounds for $|\RO \EXP ^{i f}|$ to get~\eqref{Boundscqmn}.

%bound directly the integral~\eqref{cqnmbechange} for $E\in[0,2\pi]$.
%Since $|\EXP ^{if}|=|\EXP ^{-it}|=1$ we have
%$$
%|\CT _{q}^{n,m}|\leq \frac{1}{2\pi}\int _{0}^{2\pi} |r|^{n+1} dE
%$$
%by noticing that $|r|\leq (1+\EC )$ we conclude the proof of the bounds for the $\CT _{q}^{n,m}$.
%The bounds for $q<0$ follow from the simmetries: $\CT _{-q}^{n,-m}=\CT _{q}^{n,m}=\overline{\CT }_{q}^{n,m}$ .
\end{proof}

As we can see from equations \eqref{FourierLqkN} the Fourier coefficients of the Melnikov potential
$\mathcal{L}$ depend also on the function $N(q,m,n)$
defined in \eqref{Nqmn}, so to bound (or to compute) these Fourier coefficients we need to bound
(or to compute) $N(q,m,n)$.
%that the next result, proved in appendix \ref{proofs} will be useful

Introducing the integral
\[
I(q,m,n)=\int_{-\infty}^{\infty}\frac{ \EXP ^{iq\GO ^{3}(\tau+\tau^{3}\!/3)\!/2}   }
{(\tau-i)^{2m}(\tau+i)^{2n}} d\tau
\]
$N(q,m,n)$ can be written as
\[
N(q,m,n)=\frac{2^{m+n}}{\GO ^{2m+2n-1}} \binom{-1/2}{m}\binom{-1/2}{n} I(q,m,n) .
\]
We will denote
\begin{equation}\label{htau}
 h(\tau)=i\Bigl(\frac{\tau^{3}}{3}+\tau\Bigl)
\end{equation}
the variable term in the exponencial of the integral, so that
\begin{equation}
I(q,m,n)=\int_{-\infty}^{\infty}\frac{\EXP ^{q\GO ^{3}h(\tau)\!/2}}{(\tau-i)^{2m}(\tau+i)^{2n}} d\tau
\label{Ia} .
\end{equation}

Since the integral $I(q,m,n)$ involves an exponential function with a large parameter $\GO ^{3}$ in front of the exponent,
we will apply the method of steepest descent~\cite[\S 2.5--6]{Erdelyi56}.
In particular on a complex path with $\Im(h(\tau))=0$.
So, let us define the path (see Figure~\ref{fig:Gamma}):
\begin{equation}\label{Gammapath}
\Gamma=\Gamma_{1}\cup\Gamma_{2}\cup\Gamma_{3}\cup\Gamma_{4}\cup\Gamma_{5}
\end{equation}
where $0<\varepsilon<1$, $\tau ^*$ is a point such that $\Im(h(\tau^*))=0$ that will be fixed in Lemma~\ref{Fmn1} as $|\tau^*-i|=1/2$, and
%$\varepsilon >0$ and $c$ is taken such that $c\geq 1$ and $c\varepsilon<1$ :
\begin{align}
 \Gamma_{1}&=\{\tau\in \mathbb{C}:\Im(h(\tau))=0\}\cap \{\tau\in \mathbb{C}: \Re(\tau)\leq \Re(-\bar\tau ^{*})\}\notag\\
 \Gamma_{5}&=\{\tau\in \mathbb{C}:\Im(h(\tau))=0\}\cap \{\tau\in \mathbb{C}: \Re(\tau)\geq \Re(\tau^{*})\}\notag\\
 \Gamma_{2}&=\{\tau\in \mathbb{C}:\Im(h(\tau))=0\}\cap \{\tau\in \mathbb{C}: \Re(-\bar\tau ^{*}) \leq \Re(\tau)\leq 0 \}\cap \{\tau\in \mathbb{C}: |\tau-i|\geq c\, \varepsilon\}\notag\\
 \Gamma_{4}&=\{\tau\in \mathbb{C}:\Im(h(\tau))=0\}\cap \{\tau\in \mathbb{C}: 0 \leq \Re(\tau)\leq \Re(\tau^{*})\}\cap \{\tau\in \mathbb{C}: |\tau-i|\geq c\,\varepsilon\}\notag\\
 \Gamma_{3}&=\{\tau\in \mathbb{C}:\Im(h(\tau))\leq 0\}\cap\{\tau\in \mathbb{C}: |\tau-i|= c\,\varepsilon\} .
\label{Gamma3}
\end{align}
\begin{wrapfigure}[17]{l}{70mm}
\includegraphics[width=65mm]{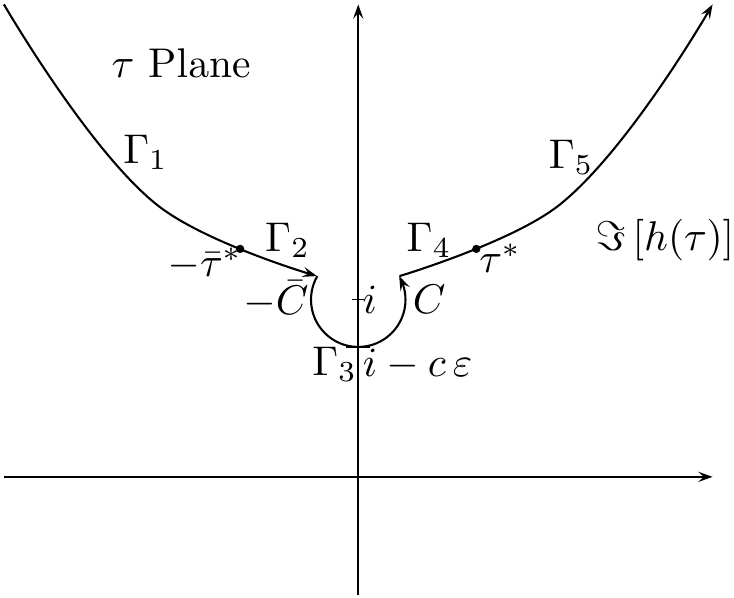}
\caption{The complex path $\Gamma$}
\label{fig:Gamma}
\end{wrapfigure}

By the Cauchy-Goursat Theorem plus a limit argument, the integral $I(q,m,n)$, defined in~\eqref{Ia}
over the real axis, is equal to the one taken over the path $\Gamma$ thinking of $\tau$ as a complex number
(see \cite{LlibreS80}).
In fact, by the same argument, its value depends neither on $\varepsilon$ nor on $\tau ^*$.

The positive branch of the hyperbola defined by $\Im(h(\tau))=0$ intersects the circumference of radius
$\varepsilon$ in two points $C_\eps$ and $-\overline{C_\eps}$ given by
\begin{equation}
\label{C}
C_\eps = \Gamma_3\cap\Gamma_4 \qquad
-\overline  C_\eps=\Gamma_3\cap\Gamma_2
\end{equation}

Since the integral over $\Gamma$ does not depend on $\varepsilon$, we will choose a particular value of
$\varepsilon$ to bound $I(q,m,n)$ and
consequently $N(q,m,n)$ defined in~\eqref{Nqmn}.
Later on, in Proposition~\ref{lema1refinado2N}, we will just compute the $\varepsilon$-independent terms of this integral.

It is not difficult to see that, if we define the function
\begin{equation}\label{cambiou}
 u(\tau)=h(i)-h(\tau)=-\frac{2}{3}-i\Bigl(\frac{\tau^{3}}{3}+\tau\Bigl)=(\tau - i)^2-\frac{i}{3}(\tau - i)^3,
\end{equation}
then
$$
 u(\Gamma_1\cup\Gamma_{2}),\, u(\Gamma_{4}\cup\Gamma_{5})\subset \mathbb{R}^+_0.
$$
Moreover, if $\tau^-\in\Gamma_1\cup\Gamma_{2}$ then $\tau^+=-\bar\tau^-\in \Gamma_{4}\cup\Gamma_{5}$ and
$$
u(\tau^-)=u(\tau^+).
$$
On the other hand one can see that $u$ is an increasing function while moving along
$\Gamma_1\cup\Gamma_{2}$ or $\Gamma_4\cup\Gamma_{5}$
in the direction of increasing imaginary part.
Therefore $u$ has two inverses in $\mathbb{R}_0^+$: $\tau^{+}$ and $\tau_{-}$.
Before writing them down let us
notice that the point $C_\eps$ defined in \eqref{C} can be written as
\begin{equation}
C_\eps=i+\varepsilon\,\EXP ^{i\theta_{\varepsilon}}\quad \text{with}\quad \theta_{\varepsilon}\in(0,\pi/2)
\label{Ce}
\end{equation}
and has the following expression in the  coordinates $u$ defined in \eqref{cambiou}
 \begin{equation}\label{ke}
 u(C_\eps)=|u(C_\eps)|=\varepsilon^{2}\,\bigl|1-\frac{\varepsilon}{3}i\EXP ^{i\theta_{\varepsilon}}\bigl|
=\varepsilon^{2}\,\ k_{\varepsilon}%\leq 2 \ \epsilon^{2}
\end{equation}
with
\[
 k_{\varepsilon}=\bigl|1-\frac{\varepsilon}{3}i\EXP ^{i\theta_{\varepsilon}}\bigl|
=\sqrt{\Bigl(1+\frac{\varepsilon}{3}\sin  \theta_{\varepsilon}\Bigl)^{2}
+\Bigl(\frac{\varepsilon}{3}\cos  \theta_{\varepsilon}\Bigl)^{2} }\geq 1 ,
\]
since by construction, $\theta_{\varepsilon}\in(0,\pi/2)$ and then $0<\sin \theta_{\varepsilon}$.

Now, we can write the inverses of the function $u$
\begin{align*}
\tau^{+}: &[u(C_\eps),+\infty)\longrightarrow\Gamma_{4}\cup\Gamma_{5}
&\tau^{-}:&[u(C_\eps),+\infty)\longrightarrow \Gamma_{1}\cup\Gamma_{2}\\
          &\qquad \quad u\longmapsto  \xi(u) + i\eta(u) ,
&         &\qquad \     u\longmapsto  -\xi(u) + i\eta(u) .
\end{align*}

The change \eqref{cambiou} is useful over $\Gamma_{1}\cup\Gamma_{2}$ and $\Gamma_{4}\cup\Gamma_{5}$,
thus performing this change
in \eqref{Nqmn}, we have that for any $\varepsilon > 0$
\begin{equation}\label{Su}
N(q,m,n)=\frac{d_{m,n}\EXP ^{-q\frac{\GO ^{3}}{3}}}{\GO ^{2m+2n-1}}
\Biggl[\int_{u(C_\eps)}^{\infty} [F_{m,n}^+(u)- F_{m,n}^-(u)]\EXP ^{-q\GO ^3 u\!/2}du
%-\int_{C(u)}^{\infty} {^-}F_{k}^{l}(u)\EXP ^{k\frac{\GO ^{3}}{2} u}du
+ (-i)\EXP ^{q\frac{\GO ^{3}}{3}}\int_{\Gamma_{3}}f_{m,n}^{q}(\tau)d\tau\Biggl]
\end{equation}
where
\begin{align}
 d_{m,n}&=i\,2^{m+n} \binom{-1/2}{n}\binom{-1/2}{m}\label{dmn}\\
 F_{m,n}^{\pm}(u)&=\frac{1}{(\tau^{\pm}(u)-i)^{2m+1}(\tau^{\pm}(u)+i)^{2n+1}}\label{Fmn}\\
f_{m,n}^{q}(\tau)&=\frac{\EXP ^{q\frac{\GO ^{3}}{2}h(\tau)}}{(\tau-i)^{2m}(\tau+i)^{2n}}, \label{fmn}
\end{align}
and $h(\tau)$ is given in \eqref{htau}. To give a bound for $N(q,m,n)$ given by~\eqref{Su}, some estimates for
$d_{m,n}$ and $F_{m,n}$ are needed. We begin with the constants $d_{m,n}$.
\begin{lemma}\label{binombound}
Let $m,n\in\mathbb{Z}$, $m,n\geq 0$ and $d_{m,n}$ be defined by equation \eqref{dmn}. Then
\[
|d_{m,n}|\leq \EXP ^{-1/2} 2^{m+n}\qquad \text{if $m+n>0$ }
\]
\end{lemma}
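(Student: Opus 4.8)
The plan is to reduce the statement to an elementary estimate on the binomial coefficients $\binom{-1/2}{k}$. First I would record the closed form $\binom{-1/2}{k}=(-1)^{k}\,4^{-k}\binom{2k}{k}$, which follows from $\binom{-1/2}{k}=\frac{(-1/2)(-3/2)\cdots(-(2k-1)/2)}{k!}$ by pulling out the sign and rewriting the product of odd numbers as $(2k-1)!!=\frac{(2k)!}{2^{k}k!}$. In particular $\bigl|\binom{-1/2}{k}\bigr|=4^{-k}\binom{2k}{k}$, so $\bigl|\binom{-1/2}{0}\bigr|=1$, and since the consecutive ratio is $\frac{\bigl|\binom{-1/2}{k+1}\bigr|}{\bigl|\binom{-1/2}{k}\bigr|}=\frac{2k+1}{2k+2}<1$, the sequence $k\mapsto\bigl|\binom{-1/2}{k}\bigr|$ is strictly decreasing; hence $\bigl|\binom{-1/2}{k}\bigr|\le\frac12$ for every $k\ge1$.

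Next, from the definition~\eqref{dmn} one has $|d_{m,n}|=2^{m+n}\,\bigl|\binom{-1/2}{m}\bigr|\,\bigl|\binom{-1/2}{n}\bigr|$, and this expression is symmetric in $m$ and $n$. Assume now $m+n>0$, so at least one index is $\ge1$; by symmetry we may take $n\ge1$. If $m=0$ then $|d_{m,n}|=2^{m+n}\,\bigl|\binom{-1/2}{n}\bigr|\le 2^{m+n}\cdot\frac12$ using $\bigl|\binom{-1/2}{0}\bigr|=1$; if instead $m\ge1$ as well, then $|d_{m,n}|\le 2^{m+n}\cdot\frac12\cdot\frac12=2^{m+n}/4$. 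In either case $|d_{m,n}|\le 2^{m+n}\cdot\frac12$.

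Finally I would invoke the numerical inequality $\frac12\le{\rm e}^{-1/2}$, which is equivalent to ${\rm e}\le4$ and hence trivially true, to conclude $|d_{m,n}|\le {\rm e}^{-1/2}\,2^{m+n}$ whenever $m+n>0$, as claimed. There is no real obstacle here: the only mildly delicate point is that the stated bound is deliberately weaker than the sharp bound $2^{m+n}/2$ that the argument actually yields, presumably chosen so that it matches the form of the companion estimates used later in Section~\ref{EstMeliPot} (and so that it correctly excludes $m=n=0$, where $|d_{0,0}|=1>{\rm e}^{-1/2}$); the hypothesis $m+n>0$ is exactly what makes at least one factor $\bigl|\binom{-1/2}{k}\bigr|$ drop to $\le\frac12$.
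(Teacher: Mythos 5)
Your proof is correct and follows essentially the same route as the paper: both arguments reduce the claim to bounding $\bigl|\binom{-1/2}{s}\bigr|$ for $s\geq 1$ and then multiply by the prefactor $2^{m+n}$ from the definition of $d_{m,n}$. The only difference is how that per-factor bound is obtained — the paper bounds the product $\frac{1}{2^s}\prod_{j\le s}\frac{2j-1}{j}$ by $\bigl(1-\frac{1}{2s}\bigr)^s\leq {\rm e}^{-1/2}$ directly, whereas you derive the sharper bound $\bigl|\binom{-1/2}{s}\bigr|\leq \frac12$ from the monotonicity of $4^{-k}\binom{2k}{k}$ and then weaken it via $\frac12\leq{\rm e}^{-1/2}$; both are valid.
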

\begin{proof}
Let $s\in \mathbb{N}$, then
\begin{align*}
\biggl|\binom{-1/2}{s}\biggl|&= \biggl|\frac{(-1)^s}{s!}\Bigl(\frac{1}{2}\Bigl)
\Bigl(\frac{1}{2}+1\Bigl)\cdots\Bigl(\frac{1}{2}+s-1\Bigl)\biggl|
=\frac{1}{2^s}\Bigl[1\cdot \frac{3}{2}\cdots \frac{2s-1}{s}]\Bigl]\\
&\leq \frac{1}{2^s}\Bigl(2-\frac{1}{s}\Bigl)^s
=\Bigl(1-\frac{1}{2s}\Bigl)^s
\leq \lim_{s\to \infty}\Bigl(1-\frac{1}{2s}\Bigl)^{s}
=\EXP ^{-1/2}
\end{align*}
\end{proof}
The next Lemma gives information about the functions $F^\pm_{m,n}(u)$.
\begin{lemma}\label{constd}
The function $F^\pm_{m,n}(u)$ defined in~\eqref{Fmn} has the expansion
\begin{equation}\label{defd}
 F^\pm_{m,n}(u)=(\pm\sqrt{u})^{-2m-1}\sum_{j=0}^{\infty}d_j^{m,n}(\pm\sqrt{u})^{j}
\end{equation}
where the coefficients $d_j^{m,n}$ satisfy
\begin{equation}\label{coeficientsdj}
d_0^{m,n}=1/(2i)^{2n+1}, \quad |d_j^{m,n}| \le \left(\frac{4}{3}\right)^{m}\left(\frac{3}{2}\right)^{\frac{j+3}{2}} .
\end{equation}
%and in particular $d_0^{n,m}=1/(2i)^{2n+1}$.
Consequently, the series~\eqref{defd} is convergent for $|\sqrt{u}|<\sqrt{2/3}$.
%Equation \eqref{defd} defines the constants $d_j^{n,m}$,
\end{lemma}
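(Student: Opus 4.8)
The plan is to reduce the computation of the coefficients $d_j^{m,n}$ to the local inversion of the map $\tau\mapsto u(\tau)$ of \eqref{cambiou} near $\tau=i$, and then to estimate the resulting Taylor coefficients by Cauchy's inequality on a circle of radius $\sqrt{2/3}$. Set $w=\tau-i$, so that $u=w^{2}\bigl(1-\tfrac{i}{3}w\bigr)$ by \eqref{cambiou}, and introduce $v:=\sqrt{u}$ with the branch chosen so that $v=w\,(1-\tfrac{i}{3}w)^{1/2}$ ($=w$ to leading order). This $v$ is analytic in $w$ near $0$ with $v(0)=0$, $v'(0)=1$, hence has an analytic inverse $v\mapsto w=w(v)$ with $w(0)=0$; the two roots in $w$ of $u=w^{2}-\tfrac{i}{3}w^{3}$ staying near $0$ are precisely $\tau^{\pm}(u)-i=w(\pm\sqrt{u})$ (in the notation of \eqref{invtau}). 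Using $w^{2}=v^{2}(1-\tfrac{i}{3}w)^{-1}$ to absorb the even powers of $w$ and $w=v\,(1-\tfrac{i}{3}w)^{-1/2}$ for the leftover factor, one rewrites \eqref{Fmn} as
\[
F^{\pm}_{m,n}(u)=(\pm\sqrt{u})^{-2m-1}\,G(\pm\sqrt{u}),\qquad
G(v):=\bigl(1-\tfrac{i}{3}w(v)\bigr)^{(2m+1)/2}\bigl(w(v)+2i\bigr)^{-(2n+1)} .
\]
As $G$ is analytic near $v=0$, expanding $G(v)=\sum_{j\ge 0}d_j^{m,n}v^{j}$ gives the form \eqref{defd}, and $d_0^{m,n}=G(0)=(2i)^{-(2n+1)}$ since $w(0)=0$, which is the first identity in \eqref{coeficientsdj}.

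The only substantial point is to control $G$ on the closed disk $\bar D:=\{|v|\le\sqrt{2/3}\}$, and the first step is to pin down the domain of analyticity of $w(v)$. The algebraic function defined by $w^{2}-\tfrac{i}{3}w^{3}=v^{2}$ can branch only where $\partial_w\bigl(w^{2}-\tfrac{i}{3}w^{3}\bigr)=2w-iw^{2}=0$, i.e. at $w=0$ (giving $v=0$) and at $w=-2i$ (giving $v^{2}=-4/3$, $|v|=2/\sqrt{3}$). Since the branch we need satisfies $w(v)=v+\tfrac{i}{6}v^{2}+\cdots$, it is single valued at $v=0$, hence analytic on the open disk $|v|<2/\sqrt{3}$, in particular on a neighbourhood of $\bar D$ (note $2/3<4/3$).

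Next I would establish the a priori bound $|w(v)|\le 1$ on $\bar D$ by a continuity argument: if $|w(v_0)|>1$ for some $v_0\in\bar D$, then since $w(0)=0$ there is a point $v_1$ on the segment $[0,v_0]$ with $|w(v_1)|=1$; but $(v_1/w(v_1))^{2}=1-\tfrac{i}{3}w(v_1)$, so
\[
|v_1|^{2}=|w(v_1)|^{2}\,\bigl|1-\tfrac{i}{3}w(v_1)\bigr|\ge 1\cdot\bigl(1-\tfrac{1}{3}\bigr)=\tfrac{2}{3},
\]
forcing $|v_1|=\sqrt{2/3}=|v_0|$, hence $v_1=v_0$ and $|w(v_0)|=1$, a contradiction. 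With $|w|\le 1$ on $\bar D$ we have $|1-\tfrac{i}{3}w|\le 4/3$ (and $\ge 2/3>0$, so the fractional power is well defined via the principal branch) and $|w+2i|\ge 1$, whence $|G(v)|\le (4/3)^{(2m+1)/2}=(4/3)^{m}(4/3)^{1/2}$ on $\bar D$.

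Finally, writing $d_j^{m,n}=\frac{1}{2\pi i}\oint_{|v|=r}G(v)\,v^{-j-1}\,dv$, Cauchy's estimate gives $|d_j^{m,n}|\le r^{-j}\sup_{|v|=r}|G|$ for every $r<\sqrt{2/3}$; letting $r\uparrow\sqrt{2/3}$,
\[
|d_j^{m,n}|\le (2/3)^{-j/2}(4/3)^{m}(4/3)^{1/2}
=(4/3)^{m}(3/2)^{j/2}(4/3)^{1/2}\le (4/3)^{m}(3/2)^{(j+3)/2},
\]
using $(4/3)^{1/2}\le(3/2)^{3/2}$ (equivalently $4/3\le 27/8$), which is the second bound in \eqref{coeficientsdj}. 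The root test applied to this bound gives $\limsup_j |d_j^{m,n}|^{1/j}\le (3/2)^{1/2}$, so the series \eqref{defd} converges for $|\sqrt{u}|<\sqrt{2/3}$. The one genuinely delicate step is the control of the inverse function $w(v)$ — both locating its disk of analyticity and proving the uniform bound $|w|\le 1$ there; the algebraic identities and the Cauchy estimate are then routine.
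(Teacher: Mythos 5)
Your proof is correct. It rests on the same two pillars as the paper's argument -- the substitution relating $\sqrt{u}$ to $\tau-i$ coming from \eqref{cambiou}, and a Cauchy estimate for the Taylor coefficients -- but you organize the estimate differently. The paper writes $d_j^{m,n}$ as a Cauchy integral in the variable $x=\pm\sqrt{u}$, immediately changes variables back to $\tau$, and bounds the resulting explicit integrand on the circle $|\tau-i|=1$ using $|\tau+i|\ge 1$ and $2\le|\tau+2i|\le 4$; no property of the inverse map $\tau^{\pm}(u)$ is needed beyond its existence. You instead keep the estimate in the $v=\sqrt{u}$ plane, which forces you to control the inverse branch $w(v)$ explicitly: you locate the branch points of the algebraic function (at $v=0$, where your branch is regular, and at $|v|=2/\sqrt{3}$) to get analyticity past $|v|=\sqrt{2/3}$, and you prove the a priori bound $|w(v)|\le 1$ on $|v|\le\sqrt{2/3}$ by a clean continuity argument. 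These two auxiliary facts have no counterpart in the paper, but once they are in place your bounds $|1-\tfrac{i}{3}w|\le 4/3$ and $|w+2i|\ge 1$ are exactly the paper's bounds on $|\tau+2i|/3$ and $|\tau+i|$ read through the change of variables, and the final numerology coincides, including the slack inequality $(4/3)^{1/2}\le(3/2)^{3/2}$. The paper's route is slightly shorter; yours makes the analytic structure of $F^{\pm}_{m,n}$ as a function of $\sqrt{u}$ (its disk of analyticity and the location of the nearest singularity) completely explicit, which is arguably more transparent and also yields the convergence statement for free.
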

\begin{proof}
Let us introduce the function
$$
T^\pm_{m,n}(x):=(\pm)x^{2m+1}F^\pm_{m,n}(x^2)=\sum _{j=0}^{\infty} d_j^{m,n}(\pm x)^j ,
$$
which is well defined since $u=x^2$ is a good change of variables in $\mathbb{R}^+$ and has the two inverses
$x=\pm \sqrt{u}$.
To bound the coefficients $d_j^{m,n}$ we use Cauchy formula:
$$
(\pm1)^jd_j^{m,n}= \frac{1}{2\pi i}\int_{|x|=\varepsilon} \frac{T^\pm _{m,n}(x)}{x^{j+1}}dx
=\frac{-1}{2\pi i}\int_{|x|=\varepsilon} \frac{F^\pm _{m,n}(x^2)}{x^{j-2m}}dx .
$$
Applying the  change of variables
\begin{equation}\label{canvixmesmenys}
x= \pm\sqrt{(\tau-i)^2-\frac{i}{3}(\tau-i)^3}=\pm (\tau-i)\sqrt{(1-\frac{i}{3}(\tau-i))}=
\pm\frac{\tau-i}{\sqrt{3}}(\sqrt{2-i\tau}),
\end{equation}
we obtain
\begin{eqnarray*}
(\pm1)^jd_j^{m,n}&=&
\mp \frac{1}{2\pi i}\int_{|\tau-i|=\rho} \frac{(\tau -i)^{2m-j}}{3^{\frac{2m-j}{2}}} (2-i\tau)^{\frac{2m-j}{2}}
\frac{1}{(\tau -i)^{2m+1}(\tau +i)^{2n+1}}
\frac{3(1-i\tau)}{2\sqrt{3}(2-i\tau)^{\frac{1}{2}}}d\tau\\
&=&\mp \frac{1}{2}\frac{i^{\frac{j+1}{2}-m}}{2\pi 3^{m-\frac{j+1}{2}}}
\int_{|\tau-i|=\rho} \frac{d\tau }{ (\tau -i)^{j+1} (\tau+i)^{2n}(\tau +2i)^{\frac{j+1-2m}{2}  }   } .
\end{eqnarray*}
Now, taking $\rho=1$ and using that $|\tau +i|\ge 1$ and that $2\le |\tau+2i|\le 4$ we have
$$
|d_j^{m,n}| \le \left(\frac{4}{3}\right)^{m}\left(\frac{3}{2}\right)^{\frac{j+3}{2}} ,
$$
which is the desired bound.
From this bound it is clear that the series defining $T^\pm_{m,n}(x)$ is convergent for $|x|<\sqrt{2/3}$ and
therefore the one for
$F_{m,n}^\pm(u)$ is convergent for $\sqrt{u}<\sqrt{2/3}$.
\end{proof}

From equation \eqref{defd} we have
\begin{equation}\label{defdg}
 F^\pm_{m,n}(u)=(\pm\sqrt{u})^{-2m-1}\sum_{j=0}^{2m}d_j^{m,n}(\pm\sqrt{u})^{j} + g^{\pm}_{m,n}(\pm\sqrt{u}),
\end{equation}
where the regular part of the function $F^\pm_{m,n}(u)$ is given by
\begin{equation}\label{gmn}
g^{\pm}_{m,n}(\pm\sqrt{u})=(\pm\sqrt{u})^{-2m-1}\sum_{j=2m+1}^{\infty}d_j^{m,n}(\pm\sqrt{u})^j
\end{equation}
and $d_j^{m,n}$ are defined by equation \eqref{defd} and satisfy bounds \eqref{coeficientsdj}.
The next Lemma bounds $g^{\pm}_{m,n}$ inside its domain of convergence.
\begin{lemma}\label{gmnbound}
 Let $g^{\pm}_{m,n}(\pm\sqrt{u})$ as in equation \eqref{gmn}, $0<\beta<1$ and $0<\sqrt{u}<\beta\sqrt{2/3}$.
Then
$$
\left|g^{\pm}_{m,n}(\pm\sqrt{u})\right|< \frac{9}{1-\beta}2^{m-2}.
$$
\end{lemma}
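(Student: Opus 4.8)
The plan is to bound $g^{\pm}_{m,n}$ directly from its series definition~\eqref{gmn} by the triangle inequality, insert the coefficient estimate established in Lemma~\ref{constd}, and then sum a geometric series. First, since $u>0$ one has, from~\eqref{gmn},
$$
\bigl|g^{\pm}_{m,n}(\pm\sqrt u)\bigr|\le (\sqrt u)^{-2m-1}\sum_{j\ge 2m+1}\bigl|d_j^{m,n}\bigr|(\sqrt u)^{j},
$$
and substituting the bound $|d_j^{m,n}|\le (4/3)^m(3/2)^{(j+3)/2}$ from~\eqref{coeficientsdj} and pulling out the $j$-independent factor $(3/2)^{3/2}$ gives
$$
\bigl|g^{\pm}_{m,n}(\pm\sqrt u)\bigr|\le \left(\frac43\right)^{m}\left(\frac32\right)^{3/2}(\sqrt u)^{-2m-1}\sum_{j\ge 2m+1}\Bigl(\sqrt{\tfrac32}\,\sqrt u\Bigr)^{j}.
$$

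Second, I would introduce $w:=\sqrt{3/2}\,\sqrt u$; the hypothesis $0<\sqrt u<\beta\sqrt{2/3}$ is precisely $0<w<\beta<1$, so the geometric series converges and $\sum_{j\ge 2m+1}w^{j}=w^{2m+1}/(1-w)<w^{2m+1}/(1-\beta)$. Plugging this in, the factor $w^{2m+1}=(3/2)^{(2m+1)/2}(\sqrt u)^{2m+1}$ cancels the prefactor $(\sqrt u)^{-2m-1}$, leaving an expression with no $u$ dependence at all:
$$
\bigl|g^{\pm}_{m,n}(\pm\sqrt u)\bigr|<\frac{1}{1-\beta}\left(\frac43\right)^{m}\left(\frac32\right)^{3/2}\left(\frac32\right)^{(2m+1)/2}=\frac{1}{1-\beta}\left(\frac43\right)^{m}\left(\frac32\right)^{m+2}=\frac{2^{m}}{1-\beta}\left(\frac32\right)^{2}=\frac{9}{1-\beta}\,2^{m-2},
$$
which is exactly the claimed bound.

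There is no real obstacle here: the only point requiring care is the legitimacy of the term-by-term estimate and of summing the tail as a geometric series, and this is guaranteed by the restriction $\sqrt u<\beta\sqrt{2/3}$ (so that $w<\beta<1$), together with the fact that Lemma~\ref{constd} has already established absolute convergence of the full series~\eqref{defd} for $|\sqrt u|<\sqrt{2/3}$. It is worth noting that the estimate comes out essentially tight in its $m$-dependence, since the coefficient bound is used unchanged and the geometric product $\bigl((4/3)(3/2)\bigr)^{m}=2^{m}$ appears with no slack.
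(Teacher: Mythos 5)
Your argument is correct and is essentially the paper's own proof: both bound the tail series term by term using the coefficient estimate from Lemma~\ref{constd} and sum a geometric series controlled by $\beta$, the only cosmetic difference being that the paper reindexes by $s=j-2m-1$ first while you cancel the factor $w^{2m+1}$ against $(\sqrt{u})^{-2m-1}$ at the end. The arithmetic $\bigl((4/3)(3/2)\bigr)^{m}(3/2)^{2}=9\cdot 2^{m-2}$ checks out, so nothing further is needed.
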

\begin{proof}
It is clear from equation \eqref{gmn} that
 \begin{equation*}
g^{\pm}_{m,n}(\pm\sqrt{u})=\sum_{s=0}^{\infty}d_{s+2m+1}^{m,n}(\pm\sqrt{u})^{s} .
\end{equation*}
Since by hypothesis $0<\sqrt{u}<\beta\sqrt{2/3}$ with $\beta <1$, we can apply Lemma~\ref{constd} to get
\begin{align*}
|g^\pm_{m,n}(\pm\sqrt{u})|&\leq
\left(\frac{4}{3}\right)^m \left(\frac{3}{2}\right)^{\frac{2m+4}{2}}
\sum_{s=0}^{\infty}\left(\frac{3}{2}\right)^{\frac{s}{2}}(\sqrt{u})^{s}\\
&\leq  \left(\frac{4}{3}\right)^m \left(\frac{3}{2}\right)^{\frac{2m+4}{2}}
\sum_{s=0}^{\infty}\left(\frac{3}{2}\right)^{\frac{s}{2}}\left(\beta \sqrt{2/3}\right)^{s}
=\frac{9}{1-\beta}2^{m-2}
\end{align*}
which proves the Lemma.
\end{proof}

We are now in conditions to give a general bound for $N(q,m,n)$ for $q\geq 1$.

\begin{proposition}\label{boundN}
Let $N(q,m,n)$ as defined in \eqref{Su} for $q\geq 1$, $m,n\geq 0$, $m+n> 0$, $\GO > 1$ . Then
$$
|N(q,m,n)|\leq 2^{n+m+3}\, \EXP ^{q} \, \GO ^{m-2n-1/2} \, \EXP ^{-q\GO ^{3}\!/3} .
$$
\end{proposition}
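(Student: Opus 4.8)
Since $N(q,m,n)=\dfrac{2^{m+n}}{\GO^{2m+2n-1}}\binom{-1/2}{m}\binom{-1/2}{n}\,I(q,m,n)$ with $I(q,m,n)$ the real-line integral~\eqref{Ia}, and since the bracket in~\eqref{Su} is $\varepsilon$-independent and is exactly this same $I(q,m,n)$ re-expressed on the path $\Gamma$, it suffices to bound the binomial factor by $\bigl|\binom{-1/2}{m}\binom{-1/2}{n}\bigr|\le {\rm e}^{-1/2}$ (Lemma~\ref{binombound}, using $m+n>0$) and then to estimate $|I(q,m,n)|$ by a convenient contour deformation. The phase $h(\tau)=i(\tau^{3}/3+\tau)$ from~\eqref{htau} has $h'(\tau)=i(\tau^{2}+1)$ vanishing at $\tau=\pm i$, so $\tau=i$ is at once a saddle of the phase, with $h(i)=-2/3$ — the origin of the factor ${\rm e}^{-q\GO^{3}/3}$ — and a pole of order $2m$ of the integrand; the whole difficulty is this collision. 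The integrand being holomorphic in the strip $-1<\Im\tau<1$ and Gaussianly small as $\Re\tau\to\pm\infty$ throughout it, Cauchy's theorem lets me translate the contour from $\{\Im\tau=0\}$ to $\{\Im\tau=1-\delta\}$ for any $\delta\in(0,1]$.

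Writing $\tau=\sigma+i(1-\delta)$ one has $\Re h(\tau)=-(1-\delta)\sigma^{2}+(1-\delta)^{3}/3-(1-\delta)\le -(1-\delta)\sigma^{2}-\tfrac23+\delta^{2}$, while $|\tau-i|=|\sigma-i\delta|\ge\delta$ and $|\tau+i|=|\sigma+i(2-\delta)|\ge 2-\delta\ge 1$. Hence, for $m\ge1$,
\[
|I(q,m,n)|\le {\rm e}^{-q\GO^{3}/3}\,{\rm e}^{q\GO^{3}\delta^{2}/2}\int_{-\infty}^{\infty}\frac{{\rm e}^{-q\GO^{3}(1-\delta)\sigma^{2}/2}}{(\sigma^{2}+\delta^{2})^{m}}\,d\sigma\le {\rm e}^{-q\GO^{3}/3}\,{\rm e}^{q\GO^{3}\delta^{2}/2}\,\delta^{1-2m}\,\pi ,
\]
after substituting $\sigma=\delta s$, dropping the remaining Gaussian, and using $\int_{\mathbb{R}}(s^{2}+1)^{-m}\,ds\le\pi$ for $m\ge1$. (For $m=0$ there is no pole; pushing the contour all the way to $\{\Im\tau=1\}$ gives $|I(q,0,n)|\le {\rm e}^{-q\GO^{3}/3}\,2^{-2n}\sqrt{2\pi/(q\GO^{3})}$, whence the claim for $m=0$ is immediate.)

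For $m\ge1$ I would split according to the size of $q\GO^{3}$. If $q\GO^{3}\ge 2(2m-1)$, take $\delta=\sqrt{(2m-1)/(q\GO^{3})}\le 1/\sqrt2$, so that ${\rm e}^{q\GO^{3}\delta^{2}/2}={\rm e}^{(2m-1)/2}$ and $\delta^{1-2m}=q^{m-1/2}\GO^{3m-3/2}(2m-1)^{-(m-1/2)}$; since $\GO^{3m-3/2}/\GO^{2m+2n-1}=\GO^{m-2n-1/2}$, this gives $|N|\le {\rm e}^{-1/2}\pi\,2^{m+n}\,q^{m-1/2}\bigl({\rm e}/(2m-1)\bigr)^{(2m-1)/2}\GO^{m-2n-1/2}{\rm e}^{-q\GO^{3}/3}$, and the stated bound follows since $q^{m-1/2}\bigl({\rm e}/(2m-1)\bigr)^{(2m-1)/2}\le {\rm e}^{q}\,2^{1/2-m}$ — a rearrangement of $\sup_{x>0}x^{m-1/2}{\rm e}^{-x}=(m-\tfrac12)^{m-1/2}{\rm e}^{-(m-1/2)}$ — together with ${\rm e}^{-1/2}\pi\,2^{1/2-m}\le 2^{3}$ for $m\ge1$. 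If instead $q\GO^{3}<2(2m-1)$, I would discard the deformation and use the crude bound $|I(q,m,n)|\le\int_{\mathbb{R}}(\sigma^{2}+1)^{-(m+n)}d\sigma\le\pi$, which reduces the claim to the elementary inequality $\pi\le 8\,{\rm e}^{q}\GO^{3m-3/2}{\rm e}^{-q\GO^{3}/3}$; on the admissible region $1<\GO^{3}<2(2m-1)/q$ a direct minimisation (resting on the fact that $x\mapsto 2/x-2/3+\tfrac12\ln x$ is everywhere $\ge1/2$) shows ${\rm e}^{q}\GO^{3m-3/2}{\rm e}^{-q\GO^{3}/3}\ge 3/2>\pi/8$.

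The contour translation and the two elementary inequalities for $N$ are routine. The main obstacle is the intermediate regime where $q\GO^{3}$ is comparable to $m$: there the saddle scale $\delta\sim\sqrt{(2m-1)/(q\GO^{3})}$ is no longer small, so bounding the part of the contour near $\tau=i$ by supremum times length would lose an $m$-dependent exponential factor — equivalently, in the language of~\eqref{Su}, one cannot bound the two integrals there separately but must let their $\varepsilon$-dependent divergences cancel. My device for avoiding this is precisely the trivial bound used in the second case; the delicate point is to place the case split so that in each regime the $q$- and $m$-dependent constants are absorbed into ${\rm e}^{q}$ and into the powers of $\GO$.
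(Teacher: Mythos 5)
Your proof is correct, but it follows a genuinely different route from the paper's. The paper does not work on a horizontal line: it deforms the real axis to the steepest-descent path $\Gamma$ of~\eqref{Gammapath}, on which $\Im h=0$, indented around the pole $\tau=i$ by a circular arc of the \emph{fixed} radius $\varepsilon=\GO^{-3/2}$; after the change of variable $u=h(i)-h(\tau)$ it bounds the resulting Laplace integral $\int_{u(C_\eps)}^\infty F^{\pm}_{m,n}(u)\,\mathrm{e}^{-q\GO^3u/2}\,du$ by $|F^{\pm}_{m,n}(u(C_\eps))|\cdot 2/(q\GO^3)$ and the arc contribution by supremum times length, both giving $O(\GO^{3m-3/2})$. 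That machinery (the path $\Gamma$, the variable $u$, the functions $F^{\pm}_{m,n}$) is set up once and reused in Proposition~\ref{lema1refinado2N} to extract the $\varepsilon$-independent asymptotics, so the bound of Proposition~\ref{boundN} comes almost for free there; your argument is self-contained and arguably cleaner as a standalone estimate (a translated contour at height $1-\delta$ with $\delta$ optimized to $\sqrt{(2m-1)/(q\GO^3)}$, plus the trivial real-line bound when $q\GO^3<2(2m-1)$), but it does not feed into the later asymptotic computation. Your case split is exactly the right device for the regime where the saddle scale and the pole collide, and all the steps check out; the only blemish is the last constant: your own chain $\ln\bigl(\mathrm{e}^{q}\GO^{3m-3/2}\mathrm{e}^{-q\GO^3/3}\bigr)>\tfrac{q\GO^3}{2}\bigl(\tfrac{2}{\GO^3}-\tfrac23+\tfrac12\ln\GO^3\bigr)\ge\tfrac{q\GO^3}{4}>\tfrac14$ yields the lower bound $\mathrm{e}^{1/4}\approx1.28$ rather than $3/2$, which is still comfortably larger than $\pi/8$, so the conclusion is unaffected.
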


\begin{proof}
We will bound the integrals of $N(q,m,n)$ in~\eqref{Su} choosing
$$
\varepsilon=\GO ^{-3/2},
%\quad c=1,
\quad \GO >1 .
$$
%with $\GO >c^{2/3}$.
We write down then, using \eqref{Ce}, \eqref{ke}, \eqref{Fmn},  and that $k_\eps >1$,
\begin{align}
&\left|\int_{u(C_\eps)}^{\infty}F^\pm_{m,n}(u)\EXP ^{-q\GO ^{3}\!u\!/2}\ du \right|
\leq \int_{\GO ^{-3}k_{\epsilon}}^{\infty}|F^\pm_{m,n}(u)|\EXP ^{-q\GO ^{3}\!u\!/2}\ du
%\quad\text{(by means of \eqref{ke})}\notag\\
\leq \int_{\GO ^{-3}            }^{\infty}     |F^\pm_{m,n}(u)|\EXP ^{-q\GO ^{3}\!u\!/2}\ du
%\quad\text{($k_{\epsilon}\geq 1$)}
\notag\\
&\leq  \left|F^\pm_{m,n}(u(C_\eps))\right|\int_{\GO ^{-3}            }^{\infty}   \EXP ^{-q\GO ^{3}\!u\!/2}\ du
%\notag\\&
\leq \frac{\GO ^{3m+\frac{3}{2}}}{\left(2-(\GO ^{-3/2})\right)^{2n+1}}\frac{2\EXP ^{-q/2}}{q\GO ^3}
%\notag\\&
\leq 2 \GO ^{3m-3/2}. \label{shorties}
\end{align}

It only remains to bound the last integral of \eqref{Su} where the integrand is given in \eqref{fmn} and the domain
$\Gamma_3$ in \eqref{Gamma3}.
The path $\Gamma_3$ can be parameterized by
\begin{equation}\label{cambiottheta}
\tau=i+\,\GO ^{-3/2}\rm{e}^{i\theta}\quad \text{with } \theta\in[\theta_1,\theta_2]=[\pi-\theta_\eps,\theta_\eps],
\end{equation}
with $\theta _\eps $ given in \eqref{Ce}.
If we define
$$
\tilde h(\theta)=h(\tau(\theta))=i\left(\frac{\tau(\theta)^3}{3}+\tau(\theta)\right),
$$
a straightforward computation using \eqref{cambiou} shows that
$$
\tilde h(\theta)=-\frac{2}{3}-\GO ^{-3}\Bigl(\EXP ^{2i\theta}+\frac{1}{3i}\GO ^{-\frac{3}{2}}\EXP ^{3i\theta} \Bigl)
$$
and then, as $\GO >1$,
\begin{align}
\bigl| \EXP ^{q\GO ^3\tilde h(\theta)\!/2}\bigl|&=\EXP ^{-q\GO ^3\!/3}
\EXP ^{-q\left(\cos 2\theta+\GO ^{-3/2}\sin 3\theta\!/3\right)\!/2}
%\notag\\&
\leq \EXP ^{-q\GO ^3\!/3}\EXP ^{\frac{q}{2}(1+\frac{1}{3}\GO ^{-3\!/2})}
%\notag\\&
\leq \EXP ^{-q\GO ^3\!/3}\EXP ^{q}.
%\quad \text{(for $\GO \geq (c^2/9)^{1/3}$ )
\label{cotaexptheta}
\end{align}
Note that, by  \eqref{cambiottheta}, over $\Gamma_3$ we have that $|\tau -i|= \GO ^{-3\!/2}<1$
and therefore $|\tau+i|>1$,
and we can bound the last integral of \eqref{Su} using  \eqref{cotaexptheta}:
\begin{align}
&\left|
\int_{\Gamma_3}\frac{\EXP ^{q\GO ^{3}h(\tau)\!/2}}{(\tau-i)^{2m}(\tau+i)^{2n}}\, d\tau \right|
= \left|\int_{\theta_1}^{\theta_2}
\frac{\EXP ^{q\GO ^{3}\tilde h(\theta)\!/2}}{(\tau(\theta)-i)^{2m}(\tau(\theta)+i)^{2n}}
i\,\GO ^{-3\!/2}\EXP ^{i\theta}\, d\theta\right|\notag\\
&\leq \int_{\theta_1}^{\theta_2}
\frac{\left|\EXP ^{q\GO ^{3}\tilde h(\theta)}\!/2\right|}{(\GO ^{-3/2})^{2m}}\GO ^{-3\!/2}\, d\theta
%\quad\text{(by \eqref{cambiottheta} and \eqref{cotaaux})}\notag\\&
\leq \int_{\theta_1}^{\theta_2}\frac{\EXP ^{-q\GO ^3\!/3}\EXP ^{q}}{\GO ^{-3m}}\GO ^{-3\!/2}\, d\theta
%\quad \text{(by \eqref{cotaexptheta})}\notag\\
%&\leq K\gamma^{\max\{m,n\}}\GO ^{3m-\frac{3}{2}}\rm{e}^{-\frac{q}{3}\GO ^3}\rm{e}^{qc^2}\label{rondie}
%&\leq 2\pi \GO ^{3m-3/2}\EXP ^{-\frac{q}{3}\GO ^3}\EXP ^{q}\notag \\&
\leq \pi \GO ^{3m-3/2}\EXP ^{-q\GO ^3\!/3}\EXP ^{q}.\label{rondie}
\end{align}

From Lemma \ref{binombound} and the bounds \eqref{shorties} and \eqref{rondie},
we can finally bound $N(q,m,n)$ given by equation \eqref{Su} as follows
\begin{align*}
|N(q,m,n)|&\leq \EXP ^{-1/2}  2^{m+n}\EXP ^{-q\GO ^{3}\!/3}\GO ^{m-2n-1\!/2} \Bigl(4 +\pi\EXP ^{q}\Bigl)
%\\
%&\leq 2\pi \EXP ^{-1/2} \EXP ^{qc^2} 2^{m+n}\EXP ^{-q\frac{\GO ^{3}}{3}}\GO ^{-2m-2n+1}\Bigl( \Bigl(\frac{2}{3}\Bigl)^{\min\{m,n\}}\GO ^{3m-3/2}+ \GO ^{-3} + \GO ^{3m-3/2}\Bigl)\\
%&&\leq 2\pi \EXP ^{-1/2} \EXP ^{qc^2} 2^{m+n}\EXP ^{-q\frac{\GO ^{3}}{3}}\GO ^{m-2n-\frac{1}{2}}\bigl(\Bigl(\frac{2}{3}\Bigl)^{\min\{m,n\}}+ 1 + 1\bigl)\\&
\leq   2^{m+n+3}\EXP ^{q}\EXP ^{-q\GO ^{3}\!/3}\GO ^{m-2n-1\!/2}.
\end{align*}
\end{proof}

From this Proposition and the one estimating the constants $\CT _{q}^{n,m}$, we can provide general estimates for
the Fourier coefficients $L_{q,k}$ of the Melnikov potential for $q\geq 1$.
\begin{proposition}\label{boundLqkbis}
Assume $\GO \geq 32$. Then for $q\ge 1$,  $k\geq 2$, the Fourier coefficients
of the Melnikov potential~\eqref{eqn:LFourier1} verify the following bounds:
\begin{align*}
 |L_{q,0}|&\leq 2^{9}  \left(2 \EXP ^{2}\right)^{q} \EC ^q \, \GO ^{-3/2}\,\EXP ^{-q \GO ^3\!/3} \\
 |L_{q,1}|&\leq 2^{11}  \EXP ^{2q}  \frac{\EC ^{q+1}}{\sqrt{1-\EC ^2}} \, \GO ^{-7/2} \EXP ^{-q \GO ^3\!/3}\\
 |L_{q,- 1}|&\leq  2^{9} \left(2 \EXP ^{2}\right)^{q}\, \EC ^{q-1}\, \GO ^{-1/2} \EXP ^{-q \GO ^3\!/3}\\
 |L_{q,k}|&\leq   2^{2k+5} \EXP ^{2q} \frac{\EC ^{q+k}}{(\sqrt{1-\EC ^2})^k}\, \GO ^{-2k-1/2}  \EXP ^{-q \GO ^3\!/3}\\
 |L_{q,-k}|&\leq 2^5  2^{2k} \left(2 \EXP ^{2}\right)^{q} \EC ^{|k-q|} \GO ^{k-1/2} \EXP ^{-q \GO ^3\!/3}
\end{align*}

\end{proposition}
\begin{proof}
 From equations \eqref{FourierLqkN} and Propositions \ref{boundcqmn} and \ref{boundN} we have
\begin{align*}
 |L_{q,0}|&\leq  2^4 \EXP ^{q} \EXP ^{-q \GO ^3\!/3} (2 \EC \EXP ^{\sqrt{1-\EC ^2}})^{q}  \GO ^{-1/2}
 \sum_{l\geq 1} (2^4 \GO ^{-1})^{l}\\
 |L_{q,1}|&\leq 2^2 \EXP ^{q} \EXP ^{q\sqrt{1-\EC ^2}}\frac{\EC ^{q+1}}{\sqrt{1-\EC ^2}}
 \EXP ^{-q \GO ^3\!/3}  \GO ^{-3/2}
 \sum_{l\geq 2} ( 2^4 \GO ^{-1})^{l}\\
 |L_{q,- 1}|&\leq  \EXP ^{q} \EXP ^{-q \GO ^3\!/3} \EXP ^{q\sqrt{1-\EC ^2}} 2^q \EC ^{|1-q|} \GO ^{3/2}
 \sum_{l\geq 2} (2^4 \GO ^{-1})^{l}\\
 |L_{q, k}|&\leq  2^{4-k} \EXP ^{q}\EXP ^{q\sqrt{1-\EC ^2}}
\frac{\EC ^{q+k}}{(\sqrt{1-\EC ^2})^k}
 \EXP ^{-q \GO ^3\!/3}  \GO ^{-k-1/2}  \sum_{l\geq k} ( 2^4 \GO ^{-1})^{l}\\
 |L_{q,-k}|&\leq 2^4 2^{-2k}  \EXP ^{q} \EXP ^{-q \GO ^3\!/3} \EXP ^{q\sqrt{1-\EC ^2}} 2^q \EC ^{|k-q|}
 \GO ^{2k-1/2} \sum_{l\geq k} (2^4 \GO ^{-1})^{l}.
\end{align*}
Since by hypothesis $2^4 /\GO \leq 1/2$, all these series converge
and the Proposition is proven using that  $0\le \EC \leq 1$ and that $\EXP ^{q\sqrt{1-\EC ^2}}\le \EXP ^{q}$.
\end{proof}

The Melnikov potential $\mathcal{L}$~\eqref{cL} has a Fourier Cosine series~\eqref{eqn:LFourier1} which can be split
with respect to the variable $\SO $ as $\mathcal{L}=\mathcal{L}_0+\mathcal{L}_1+\mathcal{L}_2+\cdots$, like
in~(\ref{eqn:LFourier2}-\ref{eqn:LFourier3}), as well as
a complex Fourier series~\eqref{LFourierLq} $\mathcal{L}=\sum_{q\in\mathbb{Z}}L_{q}\EXP ^{iq\SO }$.
Both series are related through $\mathcal{L}_0=L_{0}$ and
$\mathcal{L}_q=2\Re\left\{\EXP ^{iq\SO } L_{q}\right\}$ for $q\geq 1$. In the next Lemma we see
that the terms
\[\mathcal{L}_{\geq 2}(\AO ,\GO ,\SO ;\EC)= 2\sum_{q\geq 2}\sum_{k\in \mathbb{Z}}L_{q,k}\cos(q\SO +k\AO )
=\mathcal{L}_2+\mathcal{L}_3+\mathcal{L}_4+\cdots
\]
of second order with respect to $\SO $
satisfy a very exponentially small bound for large $\GO $.

\begin{lemma}\label{boundSumLqk}
Assume $\GO \geq 32$, $ \EC  \GO \leq 1/8$. Then for $q\geq 2$
\begin{align*}
 |L_q|\leq \sum_{k\in \mathbb{Z}}|L_{q,k}|&\leq
%K \EXP ^{-q\GO ^3\frac{2}{9}} \biggl[ 2^{3q}  \EXP ^{q\sqrt{1-\EC ^2}}\GO ^{q-1/2}\biggl]
2^{13} \EXP ^{-q \GO ^3\!/3} (\EXP ^{2} 2^{3} \GO ) ^{q}\GO ^{-1/2}\\
\left|\mathcal{L}_{\geq 2}(\AO ,\GO ,\SO ;\EC)\right|&\leq
2^{28}\GO ^{3/2}\EXP ^{-2\GO ^3\!/3}.
\end{align*}
\end{lemma}

\begin{proof}
From Proposition~\ref{boundLqkbis} we have, using that $\dfrac{\EC }{\sqrt{1-\EC ^2}}\leq 1$:
 \begin{align*}
\sum_{k\in \mathbb{Z}}|L_{q,k}|&\leq
|L_{q,0}|+|L_{q,1}|+|L_{q,-1}|+\sum_{k\geq 2}\bigl(|L_{q,k}|+|L_{q,-k}|\bigl)\\
&\leq  \EXP ^{-q \GO ^3\!/3} \EXP ^{2q}
\biggl[2^9 2^q \EC ^q \GO ^{-3/2}
+2^{11} \EC ^q \GO ^{-7/2}
+2^{9} 2^{q}\EC ^{q-1} \GO ^{-1/2}\\
& +
2^{5}\sum_{k\geq 2}\Bigl(2^k
\EC ^q \GO ^{-2k-1/2}+
2^{2k+q} \EC ^{|k-q|}\GO ^{k-1/2}\Bigl)\biggl]\\
&\leq
\EXP ^{-q \GO ^3\!/3}\EXP ^{2q}
\biggl[
2^{10} 2^q \EC ^{q-1}\GO ^{-1/2}
+2^4 \EC ^q \GO ^{-7/2}
+ 2^5 \GO ^{-1/2}\EC ^q\sum_{k=2}^{\infty}(2\GO ^{-2})^k\\
&+ 2^5 \GO ^{-1/2} 2^{q}\EC ^q\sum_{k=2}^{q-1}(4\GO \EC ^{-1})^k
+2^5 \GO ^{-1/2} \EC ^{-q} 2^{q}\sum_{k=q}^{\infty}(4\EC \GO )^k\biggl].
 \end{align*}
Using now that $\EC \GO \leq 1/8$, we obtain the required bound for
$\sum_{k\in \mathbb{Z}}|L_{q,k}|$:
 \begin{align*}
  \sum_{k\in \mathbb{Z}}|L_{q,k}|
&\leq
\EXP ^{-q \GO ^3\!/3}\EXP ^{2q}
\biggl[ 2^{10} 2^q \EC ^{q-1}\GO ^{-1/2}
+2^{11} \EC ^q \GO ^{-7/2}
+ 2^{8}\EC ^q \GO ^{-9/2}\\
&+ 2^4 2^{3q} \EC \GO ^{q-3/2}
+2^6  \GO ^{q-1/2}2^{3q}\biggl]\\
&\leq
2^{10} \EXP ^{-q \GO ^3\!/3} \EXP ^{2q} 2^{3q} \GO ^{q-1/2}
 \biggl[ 2^{-2q}\EC ^{q-1}\GO ^{-q} +2^{-3q} \EC ^q \GO ^{-3-q}\\
&+ 2^{-3q-4} \EC ^q \GO ^{-4-q} +\frac{1}{2^6}
\EC \GO ^{-1} +\frac{1}{4}\biggl]
\leq
2^{13} \EXP ^{-q \GO ^3\!/3} (\EXP ^{2} 2^{3} \GO ) ^{q}\GO ^{-1/2}.
\end{align*}

To get the bound for $\left|\mathcal{L}_{\geq 2}\right|$, we sum for $q\geq 2$,
\begin{align*}
\left|\mathcal{L}_{\geq 2}\right|&\leq 2^{13} \GO ^{-1/2} \sum_{q\geq 2} \biggl[
 \EXP ^{-\GO ^3\!/3} \EXP ^{2} 2^{3} \GO \biggl]^{q}\leq
2^{20}\EXP ^{4}\GO ^{3/2}\EXP ^{-2\GO ^3\!/3}
\end{align*}
where the last bound holds as long as
\begin{equation*}%\label{condextra}
 \EXP ^{-2\GO ^3\!/3} \EXP ^{2} 2^{3} \GO \leq 1/2
\end{equation*}
which is true for every $\GO \geq 32$. Now, using that $\EXP <4$ we get the result.
\end{proof}

\subsection{Aymptotic estimate for $N(q,m,n)$}
\label{MPAsymp}
To estimate the term $\mathcal{L}_{1}$ we will need an asymptotic expression for $N(q,m,n)$, which is given in
the next Proposition.
\begin{proposition}\label{lema1refinado2N}
For $n+m>0$ let $d_j^{m,n}$ the constants $d_j^{m,n}$ defined by equation \eqref{defd} and $d_{n,m}$ given by equation \eqref{dmn}.
Then for $q\geq 1$ and  $\GO  > 1$ we have
$$
N(q,m,n)=\frac{d_{m,n}\EXP ^{-q\GO ^{3}\!/3}}{\GO ^{2m+2n-1}}
\Biggl[\sum_{s=0}^{m}(-1)^s\sqrt{\pi}\frac{2^{3\!/2}q^{s-1\!/2}}{(2s-1)!!}d_{2m-2s}^{m,n}
\GO ^{3s-3\!/2}+T^q_{m,n}+R^q_{m,n}\Biggl]
%N(q,m,n)=\frac{d_{m,n}\EXP ^{-q\frac{\GO ^{3}}{3}}}{\GO ^{2m+2n-1}}\Biggl[2d_{2m}^{n,m}(\sqrt{q\delta})^{-1}\Gamma\bigl(\frac{1}{2}\bigl)+\sum_{s=1}^{m}\frac{d_{2m-2s}^{n,m}2(\sqrt{q\delta})^{2s-1}}{(-s-\frac{1}{2}+1)(-s-\frac{1}{2}+2)\cdots(-\frac{1}{2})}\Gamma\Bigl(\frac{1}{2}\Bigl)+E_T+E_R\Biggl]
$$
where
\begin{equation*}
|T^q_{m,n}| \leq 45 \, 2^{2m+2}\cdot \GO ^{-3}
%K \gamma_4^m \GO ^{-3}
\qquad |R^q_{m,n}|\leq 18  \, q^{m-1}\GO ^{3m-3} .
\end{equation*}
When $s=0$ the factor $1/(2s-1)!!$ in the formula above should be replaced by $1$.
\end{proposition}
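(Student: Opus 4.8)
The plan is to recognize the bracket in~\eqref{Su} as a single $\varepsilon$--independent contour integral in the variable $u$ of~\eqref{cambiou} wrapping once around the branch point $u=0$, and to evaluate it by a Hankel/residue computation. Write $b:=qG^{3}/2$ and recall from~\eqref{htau},~\eqref{cambiou} that $h'(\tau)=i(\tau-i)(\tau+i)$, $u(\tau)=h(i)-h(\tau)$, $h(i)=-2/3$. On $\Gamma_{3}$ the change $u(\tau)$ is again available, and since $e^{qG^{3}h(\tau)/2}=e^{-qG^{3}/3}e^{-bu}$ and $d\tau=i\,du/[(\tau-i)(\tau+i)]$, one checks $f^{q}_{m,n}(\tau)\,d\tau=i\,e^{-qG^{3}/3}F_{m,n}(u)e^{-bu}\,du$ with $F_{m,n}(u)=[(\tau(u)-i)^{2m+1}(\tau(u)+i)^{2n+1}]^{-1}$, the function of~\eqref{Fmn} analytically continued along $\Gamma$. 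Hence $(-i)e^{qG^{3}/3}\int_{\Gamma_{3}}f^{q}_{m,n}\,d\tau=\int_{\Gamma_{3}^{u}}F_{m,n}(u)e^{-bu}\,du$, and, joined with the arm integrals $\int_{u(C_\varepsilon)}^{\infty}(F^{+}_{m,n}-F^{-}_{m,n})e^{-bu}\,du$ (where $F^{\pm}_{m,n}$ are $F_{m,n}$ on the sheets $\tau^{\pm}(u)$ of~\eqref{invtau}, so the subtraction amounts to traversing the two arms in opposite senses), the whole bracket of~\eqref{Su} becomes $\int_{\mathcal C}F_{m,n}(u)e^{-bu}\,du$, where $\mathcal C$ is the $u$--image of $\Gamma$: it comes in from $u=+\infty$ along the $\tau^{-}$--sheet, encircles $u=0$ through $\Gamma_{3}^{u}\subset\{\Im u\ge0\}$, and leaves to $u=+\infty$ along the $\tau^{+}$--sheet. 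Since $N(q,m,n)$ does not depend on $\varepsilon$ (noted after~\eqref{Gamma3}), neither does this integral, so $\mathcal C$ may be deformed freely away from the other branch point $u=-4/3$.

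Next I would use~\eqref{defdg} to split $F_{m,n}=P+(F_{m,n}-P)$ with $P(u):=\sum_{j=0}^{2m}d^{m,n}_{j}(\sqrt u)^{\,j-2m-1}$ the principal part (a finite sum of globally defined monomials, $\sqrt u$ on the sheet of $F_{m,n}$), so that $F_{m,n}-P$ coincides with $g_{m,n}(\sqrt u)$ of~\eqref{gmn} on $|u|<2/3$; then $\int_{\mathcal C}F_{m,n}e^{-bu}\,du=\int_{\mathcal C}P\,e^{-bu}\,du+T^{q}_{m,n}$, $T^{q}_{m,n}:=\int_{\mathcal C}(F_{m,n}-P)e^{-bu}\,du$. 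For $j=2m-2s$ even one has $(\sqrt u)^{\,j-2m-1}=u^{-s-1/2}$; $s$ integrations by parts along $\mathcal C$ (boundary terms at $u=+\infty$, killed by $e^{-bu}$) give $\int_{\mathcal C}u^{-s-1/2}e^{-bu}\,du=\big(\prod_{s'=1}^{s}\tfrac{2b}{1-2s'}\big)\int_{\mathcal C}u^{-1/2}e^{-bu}\,du$, where $\int_{\mathcal C}u^{-1/2}e^{-bu}\,du=2\sqrt\pi\,b^{-1/2}$ (each arm contributing $\int_{0}^{\infty}u^{-1/2}e^{-bu}\,du=\sqrt\pi b^{-1/2}$, the inner arc $\to0$) and $\prod_{s'=1}^{s}\tfrac{2b}{1-2s'}=(-1)^{s}(2b)^{s}/(2s-1)!!$; inserting $b=qG^{3}/2$ and summing against $d^{m,n}_{2m-2s}$, $s=0,\dots,m$, reproduces the stated main term $\sum_{s}(-1)^{s}\sqrt\pi\,\tfrac{2^{3/2}q^{s-1/2}}{(2s-1)!!}d^{m,n}_{2m-2s}G^{3s-3/2}$ (with $(-1)!!=1$). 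For $j$ odd, $(\sqrt u)^{\,j-2m-1}=u^{-z}$ with $z=(2m+1-j)/2\in\{1,\dots,m\}$ an integer, so the two arms of $\mathcal C$ cancel and $\int_{\mathcal C}u^{-z}e^{-bu}\,du=-2\pi i\,\mathrm{Res}_{u=0}(u^{-z}e^{-bu})=-2\pi i(-b)^{z-1}/(z-1)!$; summed against $d^{m,n}_{2m+1-2z}$ this is exactly $R^{q}_{m,n}$, and since it is dominated by $z=m$ (i.e. $j=1$), the bounds~\eqref{coeficientsdj} together with $q,G\ge1$ yield $|R^{q}_{m,n}|\le18\,q^{m-1}G^{3m-3}$ (for $m=0$ the sum is empty and $R^{q}_{0,n}=0$).

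It remains to estimate $T^{q}_{m,n}=\int_{\mathcal C}(F_{m,n}-P)e^{-bu}\,du$. I would split the contour at radius $1/3$: on $\mathcal C\cap\{|u|<1/3\}$ use $F_{m,n}-P=g_{m,n}(\sqrt u)$, which by Lemma~\ref{gmnbound} (with $\beta=3/4$) satisfies $|g_{m,n}(\sqrt u)|\le9\cdot2^{m}$, so the two arm pieces are $\le2\cdot9\cdot2^{m}\int_{0}^{1/3}e^{-bu}\,du\le36\cdot2^{m}/(qG^{3})$ while the inner arc contributes nothing in the limit $\varepsilon\to0$ ($g_{m,n}$ stays bounded at $u=0$); passing from $\int_{\mathcal C\cap\{|u|<1/3\}}P$ to the full $\int_{\mathcal C}P$ only adds the tails $2\sum_{s=0}^{m}d^{m,n}_{2m-2s}\int_{1/3}^{\infty}u^{-s-1/2}e^{-bu}\,du$ (the odd--$j$ tails cancel between the sheets), which are $O\big(4^{m}e^{-qG^{3}/6}/(qG^{3})\big)$; finally, on $\mathcal C\cap\{|u|\ge1/3\}$, $u$ is real positive along the arms, $|F_{m,n}(u)|\le2^{2m+1}$ (using $|\tau\pm i|\ge1/2$ there) and $e^{-bu}\le e^{-qG^{3}/6}$, so this part is $O\big(2^{2m}e^{-qG^{3}/6}/(qG^{3})\big)$. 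Combining, $|T^{q}_{m,n}|\le45\cdot2^{2m+2}G^{-3}$, which completes the proof.

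The one genuine obstacle is that $\tau=i$ is simultaneously the maximum of $\Re h$ along $\Gamma$ and a pole of order $2m$ of the integrand, so each of the two arm integrals and the arc integral over $\Gamma_{3}$ diverges like $\varepsilon^{1-2s}$ as $\varepsilon\to0$ and only their sum is finite; the whole argument is arranged so that this cancellation is automatic, by packaging the bracket of~\eqref{Su} as one $\varepsilon$--independent Hankel contour integral around $u=0$ whose value is then read off from $1/\Gamma$ and from a residue. The remaining effort---getting the coefficients $(2s-1)!!$ and $1/(z-1)!$ right, tracking the exact powers of $G$ and $q$ in each monomial, and using the series for $g_{m,n}$ only inside $|u|<2/3$---is routine once the contour picture is in place.
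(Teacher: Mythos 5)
Your proposal is correct, and it follows the same underlying computation as the paper (Laplace's method on the path $\Gamma$, the expansion of $F_{m,n}^{\pm}$ from Lemma~\ref{constd}, the half‑integer powers of $u$ producing the $\Gamma(1/2)$ main term, the residue at the pole producing $R^q_{m,n}$, and the series remainder $g_{m,n}$ producing $T^q_{m,n}$), but it organizes the argument in a genuinely different and cleaner way. The paper keeps the three pieces of~\eqref{Su} separate, extracts the ``$\varepsilon$‑independent terms'' of the arm integrals one lemma at a time (Lemmas~\ref{Fmn1}--\ref{Fmn4}), defines $R^q_{m,n}$ to be literally the $\Gamma_3$ integral, and bounds it by the residue estimate~\eqref{fitares}; the cancellation of the $\varepsilon$‑divergent contributions coming from the order‑$2m$ pole at $\tau=i$ is thus handled only implicitly, by appealing to the $\varepsilon$‑independence of $N(q,m,n)$. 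You instead fuse the bracket of~\eqref{Su} into a single Hankel contour integral $\int_{\mathcal C}F_{m,n}(u)\,{\rm e}^{-bu}\,du$ around $u=0$ (correctly observing that $\Gamma_3$ maps to a full loop in the $u$‑variable and that the odd‑$j$ monomials are single‑valued, so their arm contributions cancel and only the residue survives), which makes that cancellation automatic and lets you read the main term off from $\int_{\mathcal C}u^{-1/2}{\rm e}^{-bu}du=2\sqrt{\pi}\,b^{-1/2}$ by integration by parts and $R^q_{m,n}$ off from $-2\pi i\,\mathrm{Res}_{u=0}$; your identification of the odd‑index coefficients $d^{m,n}_{2m-1-2l}$ matches the paper's Lemma~\ref{residuo} exactly. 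What your route buys is rigor and transparency at the one delicate point (saddle coinciding with a high‑order pole); what the paper's route buys is that each intermediate object is an honest real integral that can be bounded separately (as in Proposition~\ref{boundN}). The only caveat is numerical: with the crude bound~\eqref{coeficientsdj} your estimate for $R^q_{m,n}$ comes out to about $6\pi\,q^{m-1}\GO^{3m-3}$ when $m=1$, marginally above the stated $18$; this is the same order of looseness present in the paper's own passage from~\eqref{fitares} to the final constant and does not affect the structure of the proof.
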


To prove this Proposition we will proceed as in the proof of Proposition~\ref{boundN} changing the path of integration to the path
$\Gamma$ defined in \eqref{Gammapath} leading to the integral \eqref{Su}.
The important fact is that the integral \eqref{Su} does not depend on $\varepsilon$.
So, we will compute only the $\varepsilon$-independent terms of that integral.
The rest of this subsection is dedicated to the proof of Proposition~\ref{lema1refinado2N}.

\begin{lemma}\label{Fmn1}
For $0<\varepsilon< 1$ let $u(C_\eps)$ be as in equation \eqref{ke} and $F_{m,n}^{\pm}$ as defined by \eqref{Fmn}.
% and $0<\beta<1$, then if $u^{**}=2/3$ as given in Lemma \ref{Cauchyest} and  $0<\sqrt{u}<\sqrt{u_*}=\beta \sqrt{u^{**}}<\sqrt{u^{**}}$
For any $\varepsilon>0$ small enough we have, if $\GO  > 1$
 \begin{equation*}
\int_{u(C_\eps)}^{\infty} F_{m,n}^{\pm}(u)\EXP ^{-q\GO ^3 u\!/2}du
=\sum_{j=0}^{2m}\int_{u(C_\eps)}^{\infty}\EXP ^{-q\GO ^3 u\!/2}d_{j}^{m,n}(\pm\sqrt{u})^{-2m-1+j}du
+ \widehat E
%\widehat E_1 + \widehat E_2+ \widehat E_3
\end{equation*}
where the constants $d_j^{m,n}$ are defined by equation \eqref{defd} and $\widehat E$ satisfies
$$
|\widehat E|\leq 45 \, 2^{2m+2}\, \GO ^{-3} .
$$
\end{lemma}

\begin{proof}
Let us take $\sqrt{u_*}=\beta \sqrt{2/3}$ with $\beta= -1+\frac{\sqrt{11}}{4}\sqrt{3+\sqrt{11}/2}\simeq 0.79$.
A simple calculation using \eqref{cambiou} shows that $|\tau ^\pm(u^*)-i|=1/2$.
By definition, for $\varepsilon>0$ small enough we have that $0<u(C_\eps)<u_*<\sqrt{u_*}<\sqrt{2/3}$, so
$$
\int_{u(C_\eps)}^{\infty} F_{m,n}^{\pm}(u)\EXP ^{-q\GO ^3 u\!/2}du=
\int_{u(C_\eps)}^{u_*} F_{m,n}^{\pm}(u)\EXP ^{-q\GO ^3 u\!/2}du+\widehat E_1
$$
with
$$
\widehat E_1=\int_{u_*}^{\infty} F_{m,n}^{\pm}(u)\EXP ^{-q\GO ^3 u\!/2}du,
$$
which can be bounded as
%since $0<u_*<u^*$ by Lemma \ref{r} and
%by the triangle inequality
\begin{align*}
|\widehat E_1|&=\left|\int_{u_*}^{\infty}F^\pm_{m,n}(u)\EXP ^{-q\GO ^{3}\!u\!/2}\ du \right|
\leq \int_{u_*}^{\infty}\frac{\EXP ^{-q\GO ^{3}\!u\!/2}}{|(\tau^{\pm}(u)-i)^{2m+1}(\tau^{\pm}(u)+i)^{2n+1}|}\ du \\
&\leq \frac{2\EXP ^{-q\frac{\GO ^{3}}{2}u_*}}{q\GO ^{3}}\frac{1}{|\tau^{\pm}(u_*)-i|^{2m+1}}
\frac{1}{|\tau^{\pm}(u_*)+i|^{2n+1}} \\&
\leq  2^{2m+2} \GO ^{-3}\EXP ^{-q\frac{\GO ^{3}}{2}u_*}
%\Bigl( \frac{1}{\rho}\Bigl)^{2m+1}\notag\\&
\leq  2^{2m+2}\GO ^{-3}.
\end{align*}
By Lemma \ref{constd} and equation \eqref{defdg} we have
$$
\int_{u(C_\eps)}^{u_*} F_{m,n}^{\pm}(u)\EXP ^{-q\GO ^3 u\!/2}du=
\sum_{j=0}^{2m}\int_{u(C_\eps)}^{u_*}d_j^{m,n}\EXP ^{-q\GO ^3 u\!/2}(\pm\sqrt{u})^{-2m-1+j}du+\widehat E_2
$$
where
$$
\widehat E_2=\int_{u(C_\eps)}^{u_*}g^{\pm}_{m,n}(\pm\sqrt{u})\EXP ^{-q\GO ^3 u\!/2}du.
$$
Using that $\sqrt{u_*}=\beta \sqrt{2/3}$, by Lemma \ref{gmnbound} we have that, for any $\varepsilon>0$ small enough,
\begin{align*}
|\widehat E_2|&\leq \int_{u(C_\eps)}^{u_*}|g^{\pm}_{m,n}(\pm\sqrt{u})|\EXP ^{-q\GO ^3 u\!/2}du
%\\&
\leq 9 \, \frac{2^{m-2}}{1-\beta}\int _{0}^{\infty}\EXP ^{-q\GO ^3 u\!/2}du\\
&\leq   9 \, \frac{2^{m-1}}{q(1-\beta)}  \GO ^{-3}
%\\
%&\leq  \frac{2}{(1-\beta)}\Bigl(\frac{4}{3}\Bigl)^{m}(\sqrt{\frac{2}{3}})^{-2m}\GO ^{-3}\\&
\leq   9 \, \frac{2^{m-1}}{1-\beta}  \GO ^{-3}.
%&\leq 2 \gamma_1^{m}(r\beta)^{-2m-1}\frac{1}{(1-\beta)}\GO ^{-3}
\end{align*}

Finally,
\begin{equation*}%\label{cu*infty}
\int_{u(C_\eps)}^{u_*}d_j^{m,n}\EXP ^{-q\GO ^3 u\!/2}(\pm\sqrt{u})^{-2m-1+j}du=
 \int_{u(C_\eps)}^{\infty}d_j^{m,n}\EXP ^{-q\GO ^3 u\!/2}(\pm\sqrt{u})^{-2m-1+j}du+\widehat E_3 (j),
\end{equation*}
where
$$
\widehat E_3(j)=- \int_{u_*}^{\infty}d_j^{m,n}\EXP ^{-q\GO ^3 u\!/2}(\pm\sqrt{u})^{-2m-1+j}du.
$$
%observe that $E_3(m,s)$ is independent of $\varepsilon$.
We can bound $\widehat E_3(j)$ thanks to the inequalities of Lemma \ref{constd}:
\begin{align*}
 |\widehat E_3(j)|&\leq |d_{j}^{m,n}|(\sqrt{u_*})^{-2m-1+j}\int_{u_*}^{\infty}\EXP ^{-q\GO ^3 u\!/2}du
 %\\&
 \leq |d_{j}^{m,n}|(\sqrt{u_*})^{-2m-1+j}2\EXP ^{-q\frac{\GO ^{3}}{2} u_*}\frac{\GO ^{-3}}{q}\\
&\leq 2|d_{j}^{m,n}|\left(\beta\sqrt{2/3}\right)^{-2m-1+j}\GO ^{-3}
%\\&
\leq 2\left(\frac{4}{3}\right)^m \left(\frac{3}{2}\right)^{ \frac{j+3}{2} }
\left(\beta\sqrt{2/3}\right)^{-2m-1+j}\GO ^{-3}
%\qquad \text{(by Lemma \ref{Cauchyest})}\\
%&\leq 2\frac{1}{(\sqrt{u^{**}})^{2m}} \sqrt{2/3} \Bigl(\frac{4}{3}\Bigl)^m\beta^{-2s-1}(\sqrt{u^{**}})^{-1}\GO ^{-3}
\\
&
=9 \,  2^{m-1}\beta^{-2m-1+j}\GO ^{-3}.
\end{align*}
Denoting now $\widehat E_3= \sum _{j=1}^{2m} \widehat E_3(j)$, we have
\begin{align*}
|\widehat E_3|
%=&\sum_{j=0}^{2m}\int_{u_*}^{\infty}d_j^{m,n}\EXP ^{-q\GO ^3 u\!/2}(\pm\sqrt{u})^{-2m-1+j}du
\le 9 \, 2^{m-1}\GO ^{-3} \sum_{j=0}^{2m} \beta^{-2m-1+j}\le  9 \, 2^{m-1}\GO ^{-3} \frac{ \beta^{-2m-1}}{1-\beta}.
\end{align*}
Now the Lemma is proven using that $1/\beta<\sqrt{2}$ and
$$
|\widehat E|=|\widehat E_1+\widehat E_2+\widehat E_3|
$$
\end{proof}

The next Lemma is a straightforward application of the last one.
\begin{lemma}\label{Fmn2}
For $0<\varepsilon< 1$ let $u(C_\eps)$ be as in equation \eqref{ke} and $F_{m,n}^{\pm}$ as in \eqref{Fmn}.
Then for any $\varepsilon>0$ small enough we have, if $\GO  > 1$
%Let $0<\varepsilon< 1$ and $u(C_\eps)$ be as in equation \eqref{Su}, $F_{m,n}^{\pm}$ defined by \eqref{Fmn} and $0<\beta<1$, then if $u^{**}$ is %given by
%Lemma \ref{Cauchyest} and  $0<\sqrt{u}<\sqrt{u_*}=\beta \sqrt{u^{**}}<\sqrt{u^{**}}$
%for any $\varepsilon>0$ small enough we have
 \begin{equation*}
\int_{u(C_\eps)}^{\infty} \bigl[F_{m,n}^{+}(u)-F_{m,n}^{-}(u)\bigl]\EXP ^{-q\GO ^3 u\!/2}du =
2\sum_{s=0}^{m}\int_{u(C_\eps)}^{\infty}\EXP ^{-q\GO ^3 u\!/2}d_{2m-2s}^{m,n}(\sqrt{u})^{-2s-1}du
+2\widehat E
%_1 +2\widehat E_2 +2\widehat E_3,
\end{equation*}
where $\widehat E
%_1$,  $\widehat E_2$ and $\widehat E_3
$ is the same as in Lemma \ref{Fmn1}.
\end{lemma}
\begin{proof}
By Lemma \ref{Fmn1} we have
\begin{multline*}
\int_{u(C_\eps)}^{\infty} \bigl[F_{m,n}^{+}(u)-F_{m,n}^{-}(u)\bigl]\EXP ^{-q\GO ^3 u\!/2}du\\
=\sum_{j=0}^{2m}\int_{u(C_\eps)}^{\infty}\EXP ^{-q\GO ^3 u\!/2}d_{j}^{m,n}[1-(-1)^{-2m-1+j}](\sqrt{u})^{-2m-1+j}du
+ 2\widehat E
%_1 + 2\widehat E_2+ 2\widehat E_3
\end{multline*}
and the terms in the sum are not zero only for $-2m-1+j=-2s-1$ with $s=0,\dots,m$.
This observation proves the Lemma.
\end{proof}

\begin{lemma}\label{Fmn3}
Let $0<\varepsilon< 1$ and $u(C_\eps)$ be as in equation \eqref{Su}.
%, $0<\beta<1$, then if $u^{**}=2/3$ as given in Lemma \ref{Cauchyest} and  $\sqrt{u_*}=\beta \sqrt{u^{**}}<\sqrt{u^{**}}$,
Then the $\varepsilon$-independent term of
\begin{equation*}
 \int_{u(C_\eps)}^{\infty}\EXP ^{-q\GO ^3 u\!/2}d_{2m-2s}^{m,n}(\sqrt{u})^{-2s-1}du
\end{equation*}
is
\begin{equation*}
%\frac{d_{2m-2s}^{n,m}(\sqrt{q\delta})^{2s-1}}{(-s-\frac{1}{2}+1)(-s-\frac{1}{2}+2)\cdots(-\frac{1}{2})}\Gamma\bigl(\frac{1}{2}\bigl)+E_3\\
(-1)^s2^{s+3\!/2}(2s+1)\frac{(s+1)!}{(2s+2)!}d_{2m-2s}^{m,n}q^{s-1\!/2}\GO ^{3s-3\!/2}\,\Gamma(1/2)
%+\widehat E_3(m,s)
\end{equation*}
%with
%$$|\widehat E_3(m,s)|\leq  2^{m+1}\beta^{-2s-1}\GO ^{-3}.$$
\end{lemma}
\begin{proof}
% First we write down
%\begin{equation}\label{cu*infty}
 %\int_{u(C)}^{u_*}\EXP ^{-q\GO ^3 u\!/2}d_{2m-2s}^{n,m}(\sqrt{u})^{-2s-1}du= \int_{u(C)}^{\infty}\EXP ^{-q\GO ^3 u\!/2}d_%{2m-2s}^{n,m}(\sqrt{u})^{-2s-1}du+\widehat E_3
%\end{equation}
%where$$\widehat E_3(m,s)=- \int_{u_*}^{\infty}\EXP ^{-q\GO ^3 u\!/2}d_{2m-2s}^{n,m}(\sqrt{u})^{-2s-1}du$$
%observe that $E_3(m,s)$ is independent of $\varepsilon$. Let us bound $\widehat E_3(m,s)$
%\begin{align*}
% |\widehat E_3(m,s)|&\leq |d_{2m-2s}^{n,m}|(\sqrt{u_*})^{-2s-1}\int_{u_*}^{\infty}\EXP ^{-q\GO ^3 u\!/2}du\\
%&\leq |d_{2m-2s}^{n,m}|(\sqrt{u_*})^{-2s-1}2\EXP ^{-q\frac{\GO ^{3}}{2} u_*}\frac{\GO ^{-3}}{q}\\
%&\leq 2|d_{2m-2s}^{n,m}|(\beta\sqrt{u^{**}})^{-2s-1}\GO ^{-3}\\
%&\leq 2\frac{1}{(\sqrt{u^{**}})^{2m-2s}} \sqrt{2/3} \Bigl(\frac{4}{3}\Bigl)^m(\beta\sqrt{u^{**}})^{-2s-1}\GO ^{-3}\qquad \text{(by Lemma \ref{Cauchyest})}\\
%&\leq 2\frac{1}{(\sqrt{u^{**}})^{2m}} \sqrt{2/3} \Bigl(\frac{4}{3}\Bigl)^m\beta^{-2s-1}(\sqrt{u^{**}})^{-1}\GO ^{-3}\\
%&=2^{m+1}\beta^{-2s-1}\GO ^{-3},
%\end{align*}
%\begin{comment}
%On the other hand, integrating by parts
%\begin{multline*}
%\int_{u(C)}^{\infty}\EXP ^{-q\GO ^3 u\!/2}d_{2m-2s}^{n,m}(\sqrt{u})^{-2s-1}du=\\
%d_{2m-2s}^{n,m}\Biggl[-\frac{1}{-s-\frac{1}{2}+1}(C(u))^{\frac{1}{2}(-2s-1)+1}\EXP ^{-q\delta C(u)}+
%\frac{q \delta}{-s-\frac{1}{2}+1}\int_{C(u)}^{\infty}\EXP ^{-q\delta u}u^{\frac{1}{2}(-2s-1)+1}\ du\Biggl]
%\end{multline*}
%\end{comment}

By equation \eqref{ke} we know that $u(C_\eps)=O( \varepsilon^2)$ and then the following definitions make sense,
calling $\delta =\GO ^{3}\!/2$:
\begin{align*}
I_{p,s}(\varepsilon)&=\int_{u(C_\eps)}^{\infty} \EXP ^{-q\delta u}\,u^{p-(2s+1)\!/2}\, du\\
f_{p,s}(\varepsilon)&=u(C_\eps)^{p-(2s+1)\!/2}\,\EXP ^{-q\delta u(C_\eps)}.
\end{align*}
Using this notation and integrating by parts we have
\begin{align}\label{recIp}
I_{p-1,s}(\varepsilon)&=\frac{q\delta}{p-s-1/2}\,\int_{u(C_\eps)}^{\infty}\EXP ^{-q\delta u}u^{p-(2s+1)\!/2}\, du-\frac{u(C_\eps)^{p-(2s+1)\!/2}\,\EXP ^{-q\delta u(C_\eps)}}{p-s-1/2}\notag\\
&=\frac{1}{p-s-1/2}\ \left(q\delta I_{p,s}(\varepsilon)-f_{p,s}(\varepsilon)\right)
\end{align}
and also
\begin{equation}\label{CinftyI0}
\int_{u(C_\eps)}^{\infty}\EXP ^{-q\GO ^3 u\!/2}d_{2m-2s}^{m,n}(\sqrt{u})^{-2s-1}du=d_{2m-2s}^{m,n}I_{0,s}(\varepsilon) .
\end{equation}
Now, for $s> 0$ we use recursively equation \eqref{recIp} $s$ times to get
\begin{align*}
I_{0,s}(\varepsilon)=&\frac{(q\delta)^{s}}{(-s-1\!/2+1)(-s-1\!/2+2)\cdots(-1\!/2)}I_{s,s}(\varepsilon)\\
&-\sum_{p=1}^{s}\frac{(q\delta)^{p-1}f_{p,s}(\varepsilon)}{(-s-1\!/2+1)\cdots(-s-1\!/2+p)} .
\end{align*}
The $\varepsilon$-independent term of $I_{0,s}(\varepsilon)$ is given by
\begin{multline*}
 \frac{(q\delta)^{s}}{(-s-1\!/2+1)(-s-1\!/2+2)\cdots(-1\!/2)}\lim_{\varepsilon\to 0}I_{s,s}(\varepsilon)\\
                 =\frac{(q\delta)^{s}}{(-s-1\!/2+1)(-s-1\!/2+2)\cdots(-1\!/2)}\frac{1}{\sqrt{q\delta}}\  \Gamma(1/2)\\
                 =\frac{(\sqrt{q\delta})^{2s-1}}{(-s-1\!/2+1)(-s-1\!/2+2)\cdots(-1\!/2)}\, \Gamma(1/2).
\end{multline*}
Then the $\varepsilon$-independent term of the integral in equation \eqref{CinftyI0} is
$$
\frac{d_{2m-2s}^{m,n}(\sqrt{q\delta})^{2s-1}}{(-s-1\!/2+1)(-s-1\!/2+2)\cdots(-1\!/2)}\ \Gamma(1/2)
$$
when $s>0$.

In the same way, we have that the $\varepsilon$-independent term of
$$
I_{0,0}(\varepsilon)=\int_{u(C_\eps)}^{\infty}\EXP ^{-q\GO ^3 u\!/2}d_{2m}^{m,n}(\sqrt{u})^{-1}du
$$
is $d_{2m}^{m,n}(\sqrt{q\delta})^{-1}\,\Gamma(1/2)$.
Therefore the Lemma is proved if we notice that
\begin{align*}
& (-s-1\!/2+1)(-s-1\!/2+2)\cdots(-1\!/2)
%=\frac{1}{2^s}(-2s+1)(-2s+3)\cdots(-1)\\
%&
=\frac{(-1)^s}{2^s}(2s-1)(2s-3)\cdots 1\\
&=\frac{(-1)^s}{2^s}\frac{(2s+1)!!}{2s+1}=\frac{(-1)^s}{2^{2s+1}(2s+1)}\frac{(2s+2)!}{(s+1)!}\end{align*}
where we have used that
$$
(2s+1)!!=\frac{(2s+2)!}{2^s(s+1)!} \, .
$$
%we get
%$$\bigr(-s-1\!/2+1\bigr)\bigr(-s-1\!/2+2\bigr)\cdots\bigr(-1\!/2\bigr)=\frac{(-1)^s}{2^{2s+1}(2s+1)}\frac{(2s+2)!}{(s+1)!}$$
This expression allow us to write the cases $s>0$ and $s=0$ in one equation which completes the proof.
\end{proof}

Next Lemma is a straightforward application of Lemmas \ref{Fmn2} and \ref{Fmn3}.
\begin{lemma}\label{Fmn4}
Let $u(C_\eps)$ given in equation \eqref{ke} and $F_{m,n}^{\pm}$ defined by \eqref{Fmn}, then
the $\varepsilon$-independent terms of
$$
\int_{u(C_\eps)}^{\infty} \bigl[F_{m,n}^{+}(u)-F_{m,n}^{-}(u)\bigl]\EXP ^{-q\GO ^3 u\!/2}du
$$
are given by
$$
%2d_{2m}^{n,m}(\sqrt{q\delta})^{-1}\,\Gamma(1/2)+\sum_{s=1}^{m}\frac{2d_{2m-2s}^{n,m}(\sqrt{q\delta})^{2s-1}}{(-s-1\!/2+1)(-s-1\!/2+2)\cdots(-1\!/2)}\,\Gamma(1/2)+ E_T\\
\sum_{s=0}^{m}(-1)^s 2^{s+5\!/2}(2s+1)\frac{(s+1)!}{(2s+2)!}d_{2m-2s}^{m,n}q^{s-1\!/2}\GO ^{3s-3\!/2}\,\Gamma(1/2)+2 \widehat E
%T^q_{m,n}
$$
where $\widehat E$ is the same as in Lemma \ref{Fmn1}.
%$$
%|T^q_{m,n}|\leq 9\, (4\sqrt{2}+\frac{2}{1-\beta}) (\frac{2}{\beta^2})^m\GO ^{-3}
%$$and $\beta\simeq 0.79$ is given in Lemma \ref{Fmn1}.
%$$
%\beta=\biggl(-1+\frac{\sqrt{11}}{4}\sqrt{3+\frac{\sqrt{11}}{2}}\biggl)^{1/2},
%$$
\end{lemma}

\begin{lemma}\label{residuo}
Let $f^q_{m,n}$ be defined in equation \eqref{fmn}, then
 \begin{equation}\label{eresiduo}
\mathrm{Res}(f^q_{m,n}(\tau),i)=  2i\, \EXP ^{-q\GO ^3\!/3}
\sum_{l=0}^{m-1}\frac{1}{l!}\left(\frac{-q\GO ^3}{2}\right)^l d_{2m-1-2l}^{m,n}
\end{equation}
\end{lemma}
\begin{proof}
We use the definition of $f^q_{m,n}$ given in \eqref{fmn}, with $h(\tau)$ given in \eqref{htau},
or equivalently by \eqref{cambiou}.
$$
h(\tau)=-2/3-(\tau-i)^{2}+i(\tau-i)^{3}/3
$$
Taking any $\delta>0$ small enough, we have
$$
\mathrm{Res}(f^q_{m,n}(\tau),i)=
\frac{1}{2\pi i} \int_{|\tau -i|=\delta} f^q_{m,n}(\tau)d\tau=
\frac{1}{2\pi i} \int_{|\tau -i|=\delta} \frac{\EXP ^{q\frac{\GO ^3}{2}h(\tau)}}
{(\tau-i)^{2m}(\tau+i)^{2n}}\,d\tau .
$$
We use again one of the changes \eqref{canvixmesmenys}, for instance
$$
x = \sqrt{h(i)-h(\tau)}
%\sqrt{-\frac{2}{3}-h(\tau)}= (\tau-i)\sqrt{(1-\frac{i}{3}(\tau-i))}
=\frac{\tau-i}{\sqrt{3}}(\sqrt{2-i\tau}),
$$
to obtain
\begin{align*}
\mathrm{Res}\left(f^q_{m,n}(\tau),i\right)
&=
\frac{\EXP ^{-q\GO ^3\!/3}}{\pi } \int_{|x|=\bar \delta} \frac{\EXP ^{-q\GO ^3 x^2\!/2}}
{(\tau_+(x)-i)^{2m+1}(\tau_+(x)+i)^{2n+1}}\, x\, d\tau\\
&=
2i\,\EXP ^{-q\GO ^3\!/3}\, \mathrm{Res}\left(xF^{n,m}_+(x^2)\EXP ^{-q\frac{\GO ^3}{2}x^2},0\right).
\end{align*}
We can now use the Taylor expansion of the function $F^{n,m}_+(x^2)=\sum _{j\geq 0} d_j^{m,n}x^{j-2m-1}$
and the expansion of
$\EXP ^{-q\GO ^3x^2\!/2}= \sum _{l\ge 0}\left(-q\GO ^3x^2\!/2\right)^l\!/l!$ to obtain the desired
formula~\eqref{eresiduo}.

\end{proof}

From this Lemma one and the bounds for  $d_j^{m,n}$ given in \eqref{coeficientsdj}, we have
\begin{align}
\left|\mathrm{Res}\left(f^q_{m,n}(\tau),i\right)\right|&\leq
3 \, 2^m\EXP ^{-q \GO ^3\!/3}\sum _{l=0}^{m-1}\frac{1}{l!}\left(\frac{q\GO ^3}{3}\right)^{l}\notag\\
&\leq
3 \, 2 ^{m+1}\EXP ^{-q \GO ^3\!/3}\left(\frac{q\GO ^3}{3}\right)^{m-1} =
\frac{2^{m+1} q^{m-1}\GO ^{3m-3}}{3^{m-2}}\EXP ^{-q \GO ^3\!/3} .
\label{fitares}
\end{align}

We are finally in conditions to prove Proposition \ref{lema1refinado2N}. $N(q,m,n)$ is given in \eqref{Su}, and since it does not depend on $\varepsilon$
we can apply Lemmas \ref{Fmn4} and \ref{residuo} and the bound above \eqref{fitares} to obtain
\begin{equation*}
N(q,m,n)=\frac{d_{m,n}\EXP ^{-q\frac{\GO ^{3}}{3}}}{\GO ^{2m+2n-1}}
\Biggl[\sum_{s=0}^{m}(-1)^s 2^{s+\frac{5}{2}}(2s+1)\frac{(s+1)!}{(2s+2)!}d_{2m-2s}^{m,n}q^{s-1\!/2}\GO ^{3s-\frac{3}{2}}\,\Gamma(1/2)
 +T^q_{m,n}+R^q_{m,n}\Biggl]
\end{equation*}
where by Lemma \ref{Fmn4}
$$
|T^q_{m,n}|=2 \widehat E   \leq 45 \, 2^{2m+2}\cdot \GO ^{-3} .
%   9\, (4\sqrt{2}+\frac{2}{1-\beta}) (\frac{2}{\beta^2})^m\GO ^{-3},
$$
and
$$
R^q_{m,n}=(-i)\EXP ^{q\frac{\GO ^{3}}{3}}\int_{\Gamma_{3}}f_{m,n}^{q}(\tau)d\tau
$$
is bounded by Lemma \ref{residuo}
$$
|R^q_{m,n}|\leq \frac{2^{m+1} q^{m-1}}{3^{m-2}}\GO ^{3m-3} <18 \, q^{m-1} \GO ^{3m-3} .
$$
Using that
$2^{s+1}(s+1)!(2s+1)!!=(2s+2)! $ to show that
$$
\frac{(2s+1)(s+1)!}{(2s+2)!}=\frac{1}{2^{s+1}(2s-1)!!}
$$
the formula for $N(q,m,n)$ of Proposition \ref{lema1refinado2N} follows. Due to the fact that the right hand side
of this last expression is not defined for $s=0$
but the left hand side is and is equal to one, we need to point out that for $s=0$,
the term $1/(2s-1)!!$ in the final formula should be replaced by $1$.

\subsection{Asymptotic estimate of $\mathcal{L}_1$}
\label{MP1}
\textr{Let us first compute the coefficients $\CT _1^{n,m}$ which enter in the dominant terms of $\mathcal{L}_1$, more precisely $\CT _1^{3,1}$, $\CT _1^{2,2}$ and $\CT _1^{3,3}$}.
In passing, we will also compute $\CT _0^{2,0}$ and $\CT _0^{3,1}$, which will enter in the dominant terms of $\mathcal{L}_0$.
\begin{lemma}\label{ccalc}
 Let $\CT _q^{n,m}$ be defined by \eqref{sumapinyol}. Then
\[
 \CT _1^{3,1}=1+Q_1,\quad
 \CT _1^{2,2}=-3\EC +Q_2,\quad\
\CT _0^{2,0}=1+Q_3,\quad
\CT _0^{3,1}=-\frac{5}{2}\, \EC +Q_4,\quad
\textr{\CT _1^{3,3}=\frac{57}{8}\, \EC ^2 +Q_5,}
\]
with
$$
|Q_i|\le 98 \EC ^2, \quad i=1,2,3,4, \quad \textr{|Q_5|\le 98 \EC ^3,}
$$
%\begin{align*}
% |Q_1|&\leq 98 \EC ^2\\|Q_2|&\leq 50\EC ^2\\|Q_3|&\leq 4\EC ^2\\|Q_4|&\leq 19 \EC ^2\end{align*}
\end{lemma}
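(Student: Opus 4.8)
The plan is to evaluate each of the four coefficients starting from the eccentric–anomaly representation \eqref{cqnmbechange}, namely $\CT_q^{n,m}=\frac1{2\pi}\int_0^{2\pi}(\RO\,{\rm e}^{if})^{m}\,\RO^{\,n+1-m}\,{\rm e}^{-iqt}\,dE$, and to insert the closed expressions $\RO=1-\EC\cos E$, $\RO\,{\rm e}^{if}=a^2{\rm e}^{iE}-\EC+\frac{\EC^2}{4a^2}{\rm e}^{-iE}$ from \eqref{rtreifa1}, together with ${\rm e}^{-it}={\rm e}^{-iE}{\rm e}^{i\EC\sin E}$ coming from Kepler's equation \eqref{tEwint}. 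This turns each integrand into an (almost) finite Laurent polynomial in $w:={\rm e}^{iE}$ whose coefficients are elementary functions of $\EC$ and $a^2$; the integral then simply selects the constant term in $w$, after which I would expand in $\EC$ using $a^2=\tfrac12(1+\sqrt{1-\EC^2})$ together with the exact identities $a^2+\frac{\EC^2}{4a^2}=1$ and $a^2\cdot\frac{\EC^2}{4a^2}=\frac{\EC^2}{4}$ from \eqref{rtreifa2}.

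For $q=0$ the factor ${\rm e}^{-iqt}$ is absent and the computation is completely finite and explicit: $\CT_0^{2,0}=\frac1{2\pi}\int_0^{2\pi}(1-\EC\cos E)^3\,dE=1+\tfrac32\EC^2$, so $Q_3=\tfrac32\EC^2$; and extracting the constant term of $(\RO{\rm e}^{if})\RO^3$ and using $a^2+\frac{\EC^2}{4a^2}=1$ gives the exact value $\CT_0^{3,1}=-\tfrac52\EC-\tfrac{15}{8}\EC^3$, so $Q_4=-\tfrac{15}8\EC^3$. Both clearly satisfy $|Q_i|\le\tfrac32\EC^2$, well within the claimed bound.

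For $q=1$ the only non-polynomial factor is ${\rm e}^{i\EC\sin E}$, which I would split as ${\rm e}^{i\EC\sin E}=1+i\EC\sin E+R(E)$ with $|R(E)|\le\frac{(\EC\sin E)^2}{2}\le\frac{\EC^2}{2}$. Substituting the ``$1$'' and ``$i\EC\sin E$'' parts again produces finite Laurent polynomials in $w$ (note $i\EC\sin E\,{\rm e}^{-iE}=\frac{\EC}{2}(1-w^{-2})$), and extracting the constant terms yields, for $\CT_1^{3,1}$, the value $1$ plus an explicit polynomial correction of order $\EC^2$, and for $\CT_1^{2,2}$, the value $-3\EC$ plus an explicit correction of order $\EC^2$. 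The remainder from $R(E)$ is bounded in modulus by $\frac{\EC^2}{2}\sup_{E}|(\RO{\rm e}^{if})^m\RO^{n+1-m}|=\frac{\EC^2}{2}\sup_E\RO^{n+1}\le\frac{\EC^2}{2}(1+\EC)^{n+1}\le 2^{n}\EC^2$, exactly in the spirit of the proof of Proposition~\ref{boundcqmn}. Collecting the explicit polynomial corrections and these remainder bounds then gives $|Q_1|,|Q_2|\le 98\,\EC^2$.

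The computations are entirely routine algebra; the only point requiring genuine care is the bookkeeping of the several $O(\EC^2)$ and $O(\EC^3)$ contributions in the $q=1$ case, so as to assemble a single explicit constant valid on the whole range $0<\EC<1$ — which is where the numerical value $98$ is pinned down.
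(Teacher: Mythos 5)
Your proposal is correct and follows essentially the same route as the paper: pass to the eccentric anomaly via \eqref{cqnmbechange}, use the closed form \eqref{rtreifa1} for $\RO\,{\rm e}^{if}$ together with ${\rm e}^{-it}={\rm e}^{-iE}{\rm e}^{i\EC\sin E}$, expand ${\rm e}^{i\EC\sin E}$ to first order in $\EC$ with a quadratic remainder, and read off the relevant Fourier coefficient while bounding the leftover terms by $O(\EC^2)$. The only (cosmetic) difference is that you keep the Laurent-polynomial factors exact and extract constant terms directly — yielding closed forms such as $\CT_0^{2,0}=1+\tfrac32\EC^2$ and $\CT_0^{3,1}=-\tfrac52\EC-\tfrac{15}{8}\EC^3$ — whereas the paper replaces $a^2{\rm e}^{iE}+\frac{\EC^2}{4a^2}{\rm e}^{-iE}$ by ${\rm e}^{iE}$ and $(1-\EC\cos E)^3$ by $1-3\EC\cos E$ with explicit error terms $\bar E_1,\bar E_2$ and bounds the resulting products termwise.
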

\begin{proof}
 From the definition given in \eqref{sumapinyol} plus the change of variable $t=E-\EC \sin E$ we have
\begin{align*}
 \CT _1^{3,1}&=\frac{1}{2\pi}\int _{0}^{2\pi} \left(\RO \EXP ^{if(E)}\right)\RO ^{3}\EXP ^{-it}dE,&
\CT _1^{j,j}&=\frac{1}{2\pi}\int _{0}^{2\pi} \left(\RO \EXP ^{if(E)}\right)^j\RO \EXP ^{-it}dE,\quad j=2,3\\
\CT _0^{2,0}&=\frac{1}{2\pi}\int _{0}^{2\pi} \RO ^3dE,&
\CT _0^{3,1}&=\frac{1}{2\pi}\int _{0}^{2\pi} \left(\RO \EXP ^{if(E)}\right)\RO ^3dE .
\end{align*}
 From equations \eqref{rtreifa} we have
\begin{align}
 \CT _1^{3,1}&=\frac{1}{2\pi}\int _{0}^{2\pi} [a^2\EXP ^{iE}-\EC +\frac{\EC ^2}{4a^2}\EXP ^{-iE}](1-\EC  \cos E)^{3}\EXP ^{-iE}\EXP ^{i\EC \sin E}dE\label{c131calc}\\
\CT _1^{j,j}&=\frac{1}{2\pi}\int _{0}^{2\pi} [a^2\EXP ^{iE}-\EC +\frac{\EC ^2}{4a^2}\EXP ^{-iE}]^j(1-\EC  \cos E)\EXP ^{-iE}\EXP ^{i\EC \sin E}dE ,\quad j=2,3\label{c122calc}\\
\CT _0^{2,0}&=\frac{1}{2\pi}\int _{0}^{2\pi} (1-\EC  \cos E)^{3}dE\label{c020calc}\\
\CT _0^{3,1}&=\frac{1}{2\pi}\int _{0}^{2\pi} [a^2\EXP ^{iE}-\EC +\frac{\EC ^2}{4a^2}\EXP ^{-iE}](1-\EC  \cos E)^{3}dE \, .\label{c031calc}
\end{align}

In what follows we will use (see~\eqref{rtreifa2}) that
\begin{equation}\label{desis}
 0\leq \EC  \leq 1,\qquad
 \frac{1}{2}\leq a^2 \leq 1, \qquad
 |a^2-1|
 %&=\bigl|\frac{\sqrt{1-\EC ^2}-1}{2}\bigl|=\bigl|\frac{-\EC ^2}{2(\sqrt{1-\EC ^2}+1)}\bigl|
 \leq \frac{\EC ^2}{2},\qquad
a^2+\frac{\EC ^2}{4a^2}=1 .
\end{equation}

To compute $\CT _1^{3,1}$ we use equation \eqref{c131calc}. It is easy to see that
%\begin{subequations}\label{E_123}
%\begin{gather}\notag%\label{E_123a}
\begin{equation}\label{E_123b}
\begin{array}{rcl}
a^2\EXP ^{iE}-\EC +\frac{\EC ^2}{4a^2}\EXP ^{-iE}&=&\EXP ^{iE}-\EC +\bar E_1,\\
 (1-\EC \cos E)^3=1-3\EC  \cos E + \bar E_2, \quad
\EXP ^{i\EC \sin E}&=&1+i\EC \sin E+\bar E_3,
\end{array}
\end{equation}
%\end{gather}
%\end{subequations}
where
\[
\bar E_1= (a^2-1)\EXP ^{iE}+\frac{\EC ^2}{4a^2}\EXP ^{-iE},\qquad
 \bar E_2= 3\EC ^2\cos^2 E- \EC ^3\cos^3 E,\qquad
 \bar E_3=\frac{1}{2}\sum_{j=0}^{\infty}2\frac{(i\EC \sin E)^{j+2}}{(j+2)!} ,
\]
satisfy
\[
\left|\bar E_1\right|\leq \frac{\EC ^2}{2}+\frac{\EC ^2}{2}=\EC ^2,\qquad
\left|\bar E_2\right|\leq 4\EC ^2,\qquad
\left|\bar E_3\right|\leq \frac{\EC ^2}{2}\EXP ^{\EC }\leq \EC ^2 \frac{\EXP }{2}\leq 2\EC ^2 .
\]
From the equation \eqref{c131calc} defining $\CT _1^{3,1}$ plus equations \eqref{E_123b}, $\CT _1^{3,1}$ is the Fourier coefficient of order 1 of the function
\begin{multline*}
(\EXP ^{iE}-\EC +\bar E_1)(1-3\EC  \cos E + \bar E_2)(1+i\EC \sin E+\bar E_3)=\\
\EXP ^{iE}-\EC -3\EC \cos E \EXP ^{iE}+i\EC \sin E \EXP ^{iE}+\tilde Q_1(E)
\end{multline*}
where
\begin{align*}
\tilde Q_1(E)=&\bar E_1-3\EC ^2\cos E-3\EC \bar E_1\cos E+\bar E_2(\EXP ^{iE}-\EC +\bar E_1)
-i\EC ^2\sin E-3i\EC ^2\cos E\sin E \EXP ^{iE}\\
&-3i\EC ^3\cos E\sin E-3i\EC ^2\sin E\cos E \bar E_2+i\EC \sin E\bar E_2(\EXP ^{iE}-\EC +\bar E_1)\\
&+\bar E_3 (\EXP ^{iE}-\EC + \bar E_1-3\EC \cos E \EXP ^{iE}-3\EC ^2\cos E-3\EC \bar E_1\cos E+\bar E_2(\EXP ^{iE}-\EC +\bar E_1)),
\end{align*}
which implies that, up to order one in $\EC $, the Fourier coefficient $\CT _1^{3,1}$ is exactly $1$.
From the bounds for $\bar E_1$, $\bar E_2$ and $\bar E_3$ we find
$|\tilde Q_1(E)|\leq 98 \EC ^2$, which implies the Lemma for $ \CT _1^{3,1}$.

From equation~\eqref{E_123b}, it is easy to see that
\begin{equation*}%\label{E_456}
 \left(a^2\EXP ^{iE}-\EC +\frac{\EC ^2}{4a^2}\EXP ^{-iE}\right)^2=\left(\EXP ^{iE}-\EC +\bar E_1\right)^2=\EXP ^{2iE}-2\EC \EXP ^{iE}+\bar E_4
\end{equation*}
where
\begin{align*}
 \bar E_4&= \EC ^2 + 2\bar E_1(\EXP ^{iE}-\EC )+\bar E_1^2
\end{align*}
can be bounded, in regard of equations \eqref{desis} and the bound for $\tilde E_1$, as
\begin{align*}
 |\bar E_4|&\leq \EC ^2+2\EC ^2(1+\EC )+\EC ^4\leq 6 \EC ^2.
\end{align*}
Using equation \eqref{E_123b}, we see from equation \eqref{c122calc} that $\CT _1^{2,2}$ is the Fourier coefficient of order 1 of the function
\begin{multline*}
(\EXP ^{2iE}-2\EC \EXP ^{iE}+\bar E_4)(1-\EC \cos E)(1+i\EC \sin E+\bar E_3)=\\\EXP ^{2iE}-\EC \cos E\EXP ^{2iE}-2\EC \EXP ^{iE}+i\EC \sin E\EXP ^{2iE}+\tilde Q_2(E)
\end{multline*}
where
\begin{align*}
 \tilde Q_2(E)&=2\EC ^2\cos E\EXP ^{iE}+\bar E_4-\EC \bar E_4\cos E\\
&=i\EC \sin E(- \EC \cos E\EXP ^{2iE}-2\EC \EXP ^{iE}+2\EC ^2\cos E\EXP ^{iE}+\bar E_4 -\EC \bar E_4\cos E)\\
&=\bar E_3( \EXP ^{2iE}-\EC \cos E\EXP ^{2iE}-2\EC \EXP ^{iE}+2\EC ^2\cos E\EXP ^{iE}+\bar E_4 -\EC \bar E_4\cos E) .
\end{align*}
From the above expressions we conclude that, up to order one in $\EC $, the Fourier coefficient
$c_1^{2,2}$ is exactly $-3\EC $,
and from the bounds for $\bar E_4$ and $\bar E_3$ we find that $|\tilde Q_2(E)|\leq 50\EC ^2$
which implies the Lemma for $\CT _1^{2,2}$.

\textr{An analogous reasoning gives the value and the bounds for $\CT _1^{3,3}$ using
\[
\left(a^2\EXP ^{iE}-\EC +\frac{\EC ^2}{4a^2}\EXP ^{-iE}\right)^3=
\frac{15}{4}a^2\EC ^2\EXP ^{iE}-3a^4\EC \EXP ^{2iE}+ a^6 \EXP ^{3iE}+\tilde E_{4,1}, \quad |\tilde E_{4,1}|\le 8\EC^3
\]
and
\[
(1-\EC \cos E)\EXP ^{i\EC \sin E}
=1-\frac{\EC^2}{4}-\EC \EXP^{-i E}-\frac{\EC^2}{8}\EXP^{2iE}+\frac{3\EC^2}{8}\EXP^{-2iE} +\tilde E_{4,2}, \quad |\tilde E_{4,2}|\le \frac{3}{2}\EC^3
\]
which give
$$
\CT _1^{3,3}=\frac{15}{4}a^2\EC^2(1-\frac{\EC^2}{4})+3a^4\EC^2+\frac{3}{8}a^6\EC^2+ \tilde E_{4,3} ,\quad |\tilde E_{4,3}|\le 56 \EC^3
$$
Now, using \eqref{desis} we obtain the value for $\CT_1^{3,3}$.}

We compute $\CT _0^{2,0}$ using equation \eqref{c020calc}, as well as equation~\eqref{E_123b} to get
$$
\CT _0^{2,0}=\frac{1}{2\pi}\int _{0}^{2\pi} (1-3\EC  \cos E + \bar E_2) dE=1+Q_3
$$
with
$$
Q_3=\frac{1}{2\pi}\int _{0}^{2\pi} \bar E_2 dE
$$
and we have immediately, using the bound for $\bar E_2$, that $|Q_3|\leq 4\EC ^2$, the desired result for $\CT _0^{2,0}$.

We finally compute $\CT _0^{3,1}$ using equation~\eqref{c031calc}, as well as equations~\eqref{E_123b}
$$
\CT _0^{3,1}=\frac{1}{2\pi}\int _{0}^{2\pi} (\EXP ^{iE}-\EC +\bar E_1)(1-3\EC \cos E+\bar E_2)dE .
$$
We now want to find, up to order $\EC $, the Fourier coefficient of order zero of the function
\begin{align*}
 (\EXP ^{iE}-\EC +\bar E_1)(1-3\EC \cos E+\bar E_2)= \EXP ^{iE}-3\EC \EXP ^{iE}\cos E -\EC +\bar E_5 ,
\end{align*}
where
$$
\bar E_5=\bar E_2\EXP ^{iE}+3\EC ^2\cos E -\EC \bar E_2+\bar E_1-3\EC \bar E_1\cos E+\bar E_2\bar E_1 ,
$$
from where we find
$$
\CT _0^{3,1}=-\frac{5}{2}\EC +Q_4
$$
with
$$
Q_4=\frac{1}{2\pi}\int _{0}^{2\pi} \bar E_5 dE,
$$
and using the bounds for $\bar E_2$ and $\bar E_1$, we find $|Q_4|\leq 19 \EC ^2$.
\end{proof}

The next step provides an asymptotic formula for $\mathcal{L}_1=2\Re\left\{\EXP ^{i\SO } L_{1}\right\}$.

\begin{lemma}\label{2ndapr}
For $\GO \geq 32$ and $ \EC  \GO \leq 1/8$ we have the following formula for
$L_1$ given in~\eqref{LFourierLq}
\begin{equation}\label{eqn:L1Asymp}
%\begin{array}{rcl}
\textr{\Re\left\{\EXP ^{i\SO }L_1 \right\}=
\Re \left\{\EXP ^{i\SO }\left(L_{1,-1}\,\EXP ^{-i\AO }
+L_{1,-2}\,\EXP ^{-2i\AO }
+L_{1,-3}\,\EXP ^{-3i\AO }+ E_1 \right) \right\}}
%\Re \left\{\EXP ^{i\SO }\left(\bigl(\CT _1^{3,1}N(1,2,1)+ E_{3}\bigl)\,\EXP ^{-i\AO }
%+\bigl(\CT _1^{2,2}N(1,2,0)+ E_{4}\bigl)\,\EXP ^{-2i\AO } \right.\right.\\
%&+&\left.\left.\bigl(\CT _1^{3,3}N(1,3,0)+ \tilde E_{4}\bigl)\,\EXP ^{-3i\AO }+ E_1 \right) \right\}
%\end{array}
\end{equation}
where
\begin{equation}\label{nousL123}
\begin{array}{rcl}
L_{1,-1}&=& \CT _1^{3,1}N(1,2,1)+ E_{3}\\
L_{1,-2}&=& \CT _1^{2,2}N(1,2,0)+ E_{4}\\
L_{1,-3}&=& \CT _1^{3,3}N(1,3,0)+ \tilde E_{4}\, ,
\end{array}
\end{equation}
$N(q,m,n)$ are defined by formula \eqref{Nqmn} and
\begin{align*}
\left| E_1(\AO ,\GO ;\EC)\right|&\leq 2^{18} \EXP ^{-\GO ^3\!/3}
 \EC \biggl[  \GO ^{-3/2} +\EC ^{2}\GO ^{7/2}\biggr]\\
%2^{10} \EXP ^2 \EXP ^{-\GO ^3\!/3}\biggl[\GO ^{-7/2} + \EC ^{2}\GO ^{5/2}+\EC  \GO ^{-3/2} \biggl]\\
\left| E_{3}(\AO ,\GO ;\EC)\right|&\leq  2^{20} \EXP ^{-\GO ^3\!/3}\GO ^{-3/2}\\
\left| E_{4}(\AO ,\GO ;\EC)\right|&\leq  2^{18} \EXP ^{-\GO ^3\!/3}\EC\,  \GO ^{1/2} \\
\left| \tilde E_{4}(\AO ,\GO ;\EC)\right|&\leq  2^{20} \EXP ^{-\GO ^3\!/3}\EC^2\,  \GO ^{3/2}
\end{align*}
\end{lemma}
\begin{proof}
From equation \eqref{LFourierLq}, we have that
\begin{align*}
L_1&= L_{1,0}+\sum_{k\geq 1}\bigl(L_{1,k}\EXP ^{ik\AO }+L_{1,-k}\EXP ^{-ik\AO }\bigl)\\
&=L_{1,-1}\EXP ^{-i\AO }+L_{1,-2}\EXP ^{-2i\AO } +L_{1,-3}\EXP ^{-3i\AO } +\sum_{k\geq 0}L_{1,k}\EXP ^{ik\AO }+
\sum_{k\geq 4}L_{1,-k}\EXP ^{-ik\AO }
\end{align*}
Now, setting
\begin{equation}\label{L61}
 E_{1}=\sum_{k\geq 0}L_{1,k}\EXP ^{ik\AO }+\sum_{k\geq 4}L_{1,-k}\EXP ^{-ik\AO }
\end{equation}
we can write
\begin{equation}\label{L62}
\Re\Bigl\{L_1\EXP ^{i \SO }\Bigl\}=
\Re\Bigl\{ \bigl(L_{1,-1}\EXP ^{-i\AO }+L_{1,-2}\EXP ^{-2i\AO }
+L_{1,-3}\EXP ^{-3i\AO }
+ E_{1}\bigl)\EXP ^{i\SO }\Bigl\}.
\end{equation}
By definitions \eqref{FourierLqkN} we have
\begin{equation}\label{FourierL1m}
\begin{aligned}
 L_{1,-1}&=\CT _{1}^{3,1}N(1,2,1)+\sum_{l\geq 3}\CT _{1}^{2l-1,1}N(1,l,l-1)\\%\label{FourierL1m:m1}\\
 L_{1,-2}&=\CT _{1}^{2,2}N(1,2,0)+\sum_{l\geq 3}\CT _{1}^{2l-2,2}N(1,l,l-2)\\%\label{FourierL1m:m2}.
L_{1,-3}&=\CT _{1}^{3,3}N(1,3,0)+\sum_{l\geq 4}\CT _{1}^{2l-3,3}N(1,l,l-3)%\label{FourierL1m:m2}.
 \end{aligned}
\end{equation}
If we now set
\begin{equation}\label{Es}
\begin{split}
 E_{3}&= \sum_{l\geq 3}\CT _{1}^{2l-1,1}N(1,l,l-1)\\
 E_{4}&= \sum_{l\geq 3}\CT _{1}^{2l-2,2}N(1,l,l-2)\\
\tilde E_{4}&= \sum_{l\geq 4}\CT _{1}^{2l-3,3}N(1,l,l-3)
 \end{split}
\end{equation}
we obtain just~\eqref{eqn:L1Asymp} from equations \eqref{FourierL1m} and \eqref{L62}.
%\begin{equation}\label{L63}
%\Re\{\EXP ^{i\SO }L_1 \}=\Re
%\bigl\{ [(\CT _1^{3,1}N(1,2,1)+ E_{3})\EXP ^{-i\AO }+(\CT _1^{2,2}N(1,2,0)+ E_{4})\EXP ^{-2i\AO }+ E_1]\EXP ^{i\SO } \bigl\}
%\end{equation}
Once we have obtained the formula~\eqref{eqn:L1Asymp}, it only remains to bound properly the errors
$E_1$, $E_{3}$, $E_{4}$ and $\tilde E_{4}$.
From equation \eqref{L61}, the triangle inequality and Proposition~\ref{boundLqk} we have,
using also that $\dfrac{\EC }{\sqrt{1-\EC ^2}}\leq 1$:
%fites noves
% |L_{1,\phantom{\pm}0}|&\leq 2^{10}  \EXP ^{2} \EC   \GO ^{-3/2}\EXP ^{-\GO ^3\!/3} \\
% |L_{1,\phantom{\pm}1}|&\leq 2^7  \EXP  (1+\EC )^4 \GO ^{-7/2} \EXP ^{-\GO ^3\!/3}\\
% |L_{1,- 1}|&\leq  2^{10}  \EXP ^{2}   \GO ^{-1/2} \EXP ^{-\GO ^3\!/3}\\
% |L_{1,\phantom{\pm} k}|&\leq  2^5 2^{k} \EXP  (1+\EC )^{k} \GO ^{-2k-1/2}  \EXP ^{-\GO ^3\!/3}\\
% |L_{1,-k}|&\leq 2^6 2^{2k} \EXP ^{2}  \EC ^{k-1} \GO ^{k-1/2} \EXP ^{-\GO ^3\!/3}

\begin{align}
 | E_1|&\leq |L_{1,0}|+|L_{1,1}|+\sum_{k\geq 2}|L_{1,k}|+\sum_{k\geq 4}|L_{1,-k}| \notag\\
&\leq \EXP ^{2} \EXP ^{-\GO ^3\!/3}
\biggl[2^{10} \EC  \GO ^{-3/2}+2^{11} \EC \GO ^{-7/2}+
2^5 \EC  \sum_{k\geq 2} 2^{2k} \GO ^{-2k-1/2}\notag\\
&\hphantom{\leq} +2^6  \sum_{k\geq 4} 2^{2k}\EC ^{k-1}\GO ^{k-1/2}\biggl]\notag\\
&\leq \EXP ^{2} \EXP ^{-\GO ^3\!/3}
\biggl[2^{10} \EC   \GO ^{-3/2}+
2^{11} \EC \GO ^{-7/2}+
2^{10} \EC \GO ^{-9/2}
%\notag\\&
+2^{14} \EC ^{3}\GO ^{7/2}\biggl]\notag\\
%&2^{10} \EXP ^{2} \leq \EXP ^{-\GO ^3\!/3}
%\biggl[\EC  \GO ^{-3/2} +2^{2} \EXP ^{-2}\GO ^{-7/2}+ 2 \GO ^{-9/2}\sum_{k=2}^{\infty}(2(1+\EC )\GO ^{-2})^k\notag\\
%& + 2^2\EXP ^{\sqrt{1-\EC ^2}}\EC ^{-1}\GO ^{-1/2} \sum_{k\geq 3} (2^{2}\EC \GO )^k\biggl]\notag\\
%&\leq K\EXP ^{-\GO ^3\!/3}\EXP ^{c^2}\biggl[2^6 \EC  \EXP ^{\sqrt{1-\EC ^2}}\GO ^{-3/2} +2^3(1+\EC )^4 \GO ^{-7/2}+ 2^3(1+\EC )^3\GO ^%{-9/2}\notag\\
%& + 2^9\EXP ^{\sqrt{1-\EC ^2}}\EC ^{2}\GO ^{5/2} \biggl]\notag\\
&\leq  \EXP ^{-\GO ^3\!/3}
\biggl[2^{18} \EC  \GO ^{-3/2} +2^{18} \EC ^{3}\GO ^{7/2}\biggr]\notag\\
%&\leq 2^{14}\EXP ^{-\GO ^3\!/3}\biggl[\GO ^{-7/2} + 2^{10} \EXP ^{\sqrt{1-\EC ^2}}(\EC ^{2}\GO ^{5/2}+\EC \GO ^{-3/2}) \biggl]\notag\\
%&\leq 2^{17} \leq \EXP ^{-\GO ^3\!/3}\biggl[(1+\EC )^4 \GO ^{-7/2} + 2^{6} \EXP ^{\sqrt{1-\EC ^2}}(\EC ^{2}\GO ^{5/2}+\EC \GO ^{-3/2}) \biggl]
&\leq 2^{18} \EC \EXP ^{-\GO ^3\!/3}
\biggl[  \GO ^{-3/2}+\EC^2 \GO ^{7/2}\biggr] . \label{bE2}
\end{align}

We now proceed with $E_{3}$, $E_{4}$ and $\tilde E_{4}$.
By Propositions \ref{boundcqmn} and \ref{boundN}, from equations \eqref{Es}

\begin{align*}
 | E_{3}|&\leq\sum_{l\geq 3}|\CT _{1}^{2l-1,1}N(1,l,l-1)|
 \leq 2^3 \EXP ^{\sqrt{1-\EC ^2}} \, \EXP \, \EXP ^{-\GO ^3\!/3}\GO ^{3/2}\sum_{l\geq 3}(2^4 \GO ^{-1})^l
 \leq  2^{16}\EXP ^{2}\EXP ^{-\GO ^3\!/3}\GO ^{-3/2} ,\\
 | E_{4}|&\leq\sum_{l\geq 3}|\CT _{1}^{2l-2,2}N(1,l,l-2)|
 \leq  2 \EC  \EXP ^{\sqrt{1-\EC ^2}}\,  \EXP \, \EXP ^{-\GO ^3\!/3}\GO ^{7/2}\sum_{l\geq 3}(2^4 \GO ^{-1})^l
  \leq  2^{14}\EXP ^{2} \EXP ^{-\GO ^3\!/3}\EC  \GO ^{1/2} \\
| \tilde E_{4}|&\leq\sum_{l\geq 4}|\CT _{1}^{2l-3,3}N(1,l,l-3)|
 \leq  2^{-1} \EC ^2 \EXP ^{\sqrt{1-\EC ^2}}\,  \EXP \, \EXP ^{-\GO ^3\!/3}\GO ^{11/2}\sum_{l\geq 4}(2^4 \GO ^{-1})^l
  \leq  2^{16}\EXP ^{2} \EXP ^{-\GO ^3\!/3}\EC ^2  \GO ^{3/2} .
\end{align*}
The two estimates above, together with estimate~\eqref{bE2} provide the desired bounds for the errors
of equation~\eqref{eqn:L1Asymp}.
\end{proof}

Putting together Lemmas~\ref{boundSumLqk} and~\ref{2ndapr} we already have
\begin{equation}\label{oldlemma8}
\begin{array}{rcl}
\mathcal{L}&=&L_0+2\Re \left\{
\left[ L_{1,-1} \EXP ^{-i\AO }
+L_{1,-2}\EXP ^{-2i\AO }
+ L_{1,-3}\EXP ^{-3i\AO }+ E_1\right]\EXP ^{i\SO } \right\}  +\mathcal{L}_{\geq 2}
\end{array}
\end{equation}
with $L_{1,-1}$, $L_{1,-2}$ and $L_{1,-3}$ as given in \eqref{nousL123}
and
\begin{equation}\label{E1234}
\begin{split}
\left| E_1(\AO ,\GO ;\EC)\right|&\leq 2^{18} \EXP ^{-\GO ^3\!/3}
 \EC \biggl[  \GO ^{-3/2} +\EC ^{2}\GO ^{7/2}\biggr]\\
%2^{10} \EXP ^2 \EXP ^{-\GO ^3\!/3}\biggl[\GO ^{-7/2} + \EC ^{2}\GO ^{5/2}+\EC  \GO ^{-3/2} \biggl]\\
\left| E_{3}(\AO ,\GO ;\EC)\right|&\leq  2^{20} \EXP ^{-\GO ^3\!/3}\GO ^{-3/2}\\
\left| E_{4}(\AO ,\GO ;\EC)\right|&\leq  2^{18} \EXP ^{-\GO ^3\!/3}\EC\,  \GO ^{1/2} \\
\left| \tilde E_{4}(\AO ,\GO ;\EC)\right|&\leq  2^{20} \EXP ^{-\GO ^3\!/3}\EC^2\,  \GO ^{3/2}\\
|\mathcal{L}_{\geq 2}(\AO ,\GO ,\SO;\EC)|&\leq  2^{28}\GO ^{3/2}\,\EXP ^{-\GO ^3\frac{4}{9}} .
\end{split}
\end{equation}
We now compute $ N(1,2,1)$,  $ N(1,2,0)$  and \textr{$ N(1,3,0)$}.
\begin{lemma}\label{N1}
 Let $N(q,m,n)$ be defined by equations \eqref{Nqmn}. Then
\begin{align*}
 N(1,2,1)&=\frac{1}{4}\sqrt{\frac{\pi}{2}}\GO ^{-1/2}\EXP ^{-\GO ^3\!/3}+{^1}E_{TT}\\
 N(1,2,0)&=\sqrt{\frac{\pi}{2}}\GO ^{3/2}\EXP ^{-\GO ^3\!/3}+{^2}E_{TT}\\
\textr{N(1,3,0)}&=\frac{1}{3}\sqrt{\frac{\pi}{2}}\GO ^{5/2}\EXP ^{-\GO ^3\!/3}+{^3}E_{TT}
 \end{align*}
where
$$
|{^1}E_{TT}|\leq 2^{6}\, 9\, \GO ^{-2}\EXP ^{-\GO ^3\!/3},\qquad |{^2}E_{TT}|\leq 2^{5}\, 9\, \EXP ^{-\GO ^3\!/3},
\qquad |{^3}E_{TT}|\leq 2^{6}\, 9\,G\, \EXP ^{-\GO ^3\!/3}.
$$
\end{lemma}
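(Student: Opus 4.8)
The plan is to read off both formulas from Proposition~\ref{lema1refinado2N} applied with $q=1$ and the two index pairs $(m,n)=(2,1)$ and $(m,n)=(2,0)$. In both cases $m=2$, so the finite sum over $s$ runs over $s\in\{0,1,2\}$, and its top term $s=m=2$ (the one carrying the largest power $\GO^{3s-3/2}$) yields the announced main term while everything else is absorbed into $\,{}^{i}E_{TT}$. First I would record the elementary constants. From~\eqref{dmn} and $\binom{-1/2}{0}=1$, $\binom{-1/2}{1}=-\tfrac12$, $\binom{-1/2}{2}=\tfrac38$ one gets $d_{2,1}=i\,2^{3}\binom{-1/2}{1}\binom{-1/2}{2}=-\tfrac32 i$ and $d_{2,0}=i\,2^{2}\binom{-1/2}{0}\binom{-1/2}{2}=\tfrac32 i$, so in particular $|d_{2,1}|=|d_{2,0}|=\tfrac32$; from Lemma~\ref{constd}, $d^{m,n}_{0}=1/(2i)^{2n+1}$, hence $d^{2,1}_{0}=i/8$ and $d^{2,0}_{0}=-i/2$.

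Since $(2m-1)!!=3!!=3$ at $m=2$, substituting $s=m=2$ in the bracket of Proposition~\ref{lema1refinado2N} and multiplying by the prefactor $d_{m,n}{\rm e}^{-\GO^{3}/3}/\GO^{2m+2n-1}$ gives, for $N(1,2,1)$ (denominator $\GO^{5}$),
\[
\Bigl(-\frac{3i}{2}\Bigr)\cdot\frac{1}{\GO^{5}}\cdot\sqrt{\pi}\,\frac{2^{3/2}}{3}\cdot\frac{i}{8}\cdot\GO^{9/2}\cdot{\rm e}^{-\GO^{3}/3}
=\frac14\sqrt{\frac{\pi}{2}}\,\GO^{-1/2}\,{\rm e}^{-\GO^{3}/3},
\]
and for $N(1,2,0)$ (denominator $\GO^{3}$) the analogous computation gives $\sqrt{\pi/2}\,\GO^{3/2}{\rm e}^{-\GO^{3}/3}$; in both cases the two factors of $i$ combine so that $d_{m,n}d^{m,n}_{0}$ comes out real and positive, as it must. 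These are exactly the displayed main terms, so it only remains to estimate the rest.

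The error $\,{}^{i}E_{TT}$ collects three pieces, each multiplied by the common prefactor of modulus $\tfrac32\,\GO^{-(2m+2n-1)}{\rm e}^{-\GO^{3}/3}$: (i) the subleading terms $s=0,1$ of the finite sum (with the convention $1/(2s-1)!!=1$ at $s=0$), which by $|d^{m,n}_{j}|\le(4/3)^{m}(3/2)^{(j+3)/2}$ from Lemma~\ref{constd} are, inside the bracket, $O(\GO^{-3/2})$ and $O(\GO^{3/2})$ respectively, hence uniformly much smaller than the $s=2$ term; (ii) the term $T^{1}_{m,n}$ with $|T^{1}_{m,n}|\le 45\cdot2^{6}\GO^{-3}$; and (iii) the term $R^{1}_{m,n}$ with $|R^{1}_{m,n}|\le 18\,\GO^{3}$, which is the dominant contribution, producing $\tfrac32\cdot18\,\GO^{3-5}{\rm e}^{-\GO^{3}/3}=27\,\GO^{-2}{\rm e}^{-\GO^{3}/3}$ in $\,{}^{1}E_{TT}$ and $27\,{\rm e}^{-\GO^{3}/3}$ in $\,{}^{2}E_{TT}$. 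Using that $\GO$ is large (in the regime $\GO\ge 32$ of interest) to absorb the much smaller contributions (i) and (ii) into the same power of $\GO$, one checks that the totals stay below $2^{6}\cdot 9\,\GO^{-2}{\rm e}^{-\GO^{3}/3}$ and $2^{5}\cdot 9\,{\rm e}^{-\GO^{3}/3}$ respectively, which are the claimed bounds. The computation is routine; the only thing requiring care is the bookkeeping of the numerical factors and of the $\pm i$'s, together with recognising that it is the residue-type remainder $R^{1}_{m,n}$ — not the tails of the $s$-sum or $T^{1}_{m,n}$ — that sets the size of the error.
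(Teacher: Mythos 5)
Your proposal is correct and follows essentially the same route as the paper: apply Proposition~\ref{lema1refinado2N} with $q=1$ and $(m,n)=(2,1),(2,0)$, extract the $s=m$ term via $d_{2,1}d_0^{2,1}=\tfrac{3}{16}$ and $d_{2,0}d_0^{2,0}=\tfrac{3}{4}$ to get the stated main terms, and absorb the $s=0,1$ terms together with $T^1_{m,n}$ and the dominant $R^1_{m,n}$ into $\,{}^{i}E_{TT}$ using Lemma~\ref{constd} and $\GO$ large. Your identification of $R^1_{m,n}$ as the piece that fixes the order $\GO^{-2}$ (resp. $\GO^{0}$) of the error matches the paper's bookkeeping, and your sign tracking is consistent with the final formulas.
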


\begin{proof}
 From Proposition \ref{lema1refinado2N} we have
\begin{multline}\label{L101}
 N(1,2,1)=\frac{d_{2,1}}{\GO ^5}\EXP ^{-\GO ^3\!/3}\Biggl[d_4^{2,1}\sqrt{\pi}
 \Bigl(\frac{2}{\GO }\Bigl)^{3/2}\\
 -2^2d_2^{2,1}\sqrt{\pi}\sqrt{\frac{\GO ^3}{2}}+\frac{2^3}{3}d_0^{2,1}\sqrt{\pi}\left(\sqrt{\frac{\GO ^3}{2}}\right)^3+T^1_{2,1}+R^1_{2,1}\Biggl]
\end{multline}
where
\begin{equation*}
 |T^1_{2,1}|\leq 45\, 2^{6}\GO ^{-3},\qquad |R^1_{2,1}|\leq 18 \, \GO ^3,
\end{equation*}
\begin{multline}\label{L102}
 N(1,2,0)=\frac{d_{2,0}}{\GO ^3}\EXP ^{-\GO ^3\!/3}\Biggl[2d_4^{2,0}\sqrt{\pi}\left(\sqrt{\frac{\GO ^3}{2}}\right)^{-1}\\
 -2^2d_2^{2,0}\sqrt{\pi}\sqrt{\frac{\GO ^3}{2}}+\frac{2^3}{3}d_0^{2,0}\sqrt{\pi}\left(\sqrt{\frac{\GO ^3}{2}}\right)^3+T^1_{2,0}+R^1_{2,0}\Biggl]
\end{multline}
where
\begin{equation*}
 |T^1_{2,0}|\leq 45 \, 2^{6} \GO ^{-3}\qquad |R^1_{2,0}|\leq 18  \, \GO ^3,
\end{equation*}
and
\begin{multline}\label{L102nou}
\textr{N(1,3,0)}=\frac{d_{3,0}}{\GO ^5}\EXP ^{-\GO ^3\!/3}\Biggl[2d_6^{3,0}\sqrt{2\pi}\GO ^{-3/2}\\
 -2d_4^{3,0}\sqrt{2\pi}\GO ^{3/2}+ \frac{2}{3}\sqrt{2\pi}d_2^{3,0}\GO^{9/2}-
 \frac{2}{15}d_0^{3,0}\sqrt{2\pi}\GO ^{15/2}+T^1_{3,0}+R^1_{3,0}\Biggl]
\end{multline}
where
\begin{equation*}
 |T^1_{3,0}|\leq 45 \, 2^{8} \GO ^{-3}\qquad |R^1_{3,0}|\leq 18  \, \GO ^6.
\end{equation*}

Taking the dominant terms in  \eqref{L101}, \eqref{L102} and \eqref{L102nou}we get:
\begin{equation}\label{L103}
 N(1,2,1)=d_{2,1}d_0^{2,1}\frac{2\sqrt{2}}{3}\sqrt{\pi}\GO ^{-1/2}\EXP ^{-\GO ^3\!/3}+{^1}E+{^1}E_{TR}
\end{equation}
where
$$
{^1}E=2^{\frac{3}{2}}d_{2,1}\sqrt{\pi}\bigl(d_4^{2,1}\GO ^{-13/2}-d_2^{2,1}\GO ^{-7/2}\bigl)\EXP ^{-\GO ^3\!/3},\qquad
{^1}E_{TR}=(T^1_{2,1}+R^1_{2,1})d_{2,1}\GO ^{-5}\EXP ^{-\GO ^3\!/3},
$$
\begin{equation}\label{L104}
 N(1,2,0)=d_{2,0}d_0^{2,0}\frac{2\sqrt{2}}{3}\sqrt{\pi}\GO ^{3/2}\EXP ^{-\GO ^3\!/3}+{^2}E+{^2}E_{TR}
\end{equation}
where
$$
{^2}E=2^{\frac{3}{2}}d_{2,0}\sqrt{\pi}\bigl(d_4^{2,0}\GO ^{-9/2}-d_2^{2,0}\GO ^{-3/2}\bigl)\EXP ^{-\GO ^3\!/3}\qquad
{^2}E_{TR}=(T^1_{2,0}+R^1_{2,0})d_{2,0}\GO ^{-3}\EXP ^{-\GO ^3\!/3},
$$
and
\begin{equation}\label{L104nou}
 N(1,3,0)=-d_{3,0}d_0^{3,0}\frac{2}{15}\sqrt{2\pi}\GO ^{5/2}\EXP ^{-\GO ^3\!/3}+{^3}E+{^3}E_{TR}
\end{equation}
where
$$
{^3}E=2d_{3,0}\sqrt{2\pi}\bigl(d_6^{3,0}\GO ^{-13/2}-d_4^{3,0}\GO ^{-7/2}+
\frac{d_2^{3,0}}{3}\GO ^{-1/2}
\bigl)\EXP ^{-\GO ^3\!/3},\quad
{^3}E_{TR}=(T^1_{3,0}+R^1_{3,0})d_{3,0}\GO ^{-5}\EXP ^{-\GO ^3\!/3}.
$$

From the bounds given in Lemma \ref{constd} for $d_j^{m,n}$ and the bounds in Lemma \ref{binombound} for $d_{m,n}$ we get:
\begin{equation*}
\begin{array}{rcl}
 |{^1}E| &\leq &2^{\frac{3}{2}}|d_{2,1}|\sqrt{\pi}(|d_4^{1,2}|+|d_2^{2,1}|)\GO ^{-7/2}\EXP ^{-\GO ^3\!/3} \leq
 2^7 \, 9\,  \GO ^{-7/2}\EXP ^{-\GO ^3\!/3}\\
|{^1}E_{TR}| &\leq & |d_{2,1}| \, 36 \, \GO ^{-2}\EXP ^{-\GO ^3\!/3} \leq 2^5 9\, \GO ^{-2}\EXP ^{-\GO ^3\!/3},
\end{array}
\end{equation*}
\begin{equation*}
\begin{array}{rcl}
|{^2}E|&\leq & 2^{\frac{3}{2}}|d_{2,0}|\sqrt{\pi}(|d_4^{2,0}|+|d_2^{2,0}|)\GO ^{-3/2}\EXP ^{-\GO ^3\!/3}
\leq   2^6 9\,  \GO ^{-3/2}\EXP ^{-\GO ^3\!/3} \\
|{^2}E_{TR}| &\leq & |d_{2,0}| \, 36\EXP ^{-\GO ^3\!/3} \leq 2^4 9\, \EXP ^{-\GO ^3\!/3},
\end{array}
\end{equation*}
and
\begin{equation*}
\begin{array}{rcl}
|{^3}E|&\leq & 2|d_{3,0}|\sqrt{2\pi}(|d_6^{3,0}|+|d_4^{3,0}|+\frac{|d_2^{3,0}|}{3})\GO ^{-1/2}\EXP ^{-\GO ^3\!/3}
\leq   2^8 9\,  \GO ^{-1/2}\EXP ^{-\GO ^3\!/3} \\
|{^3}E_{TR}| &\leq & |d_{3,0}| \, 36\,\GO \EXP ^{-\GO ^3\!/3} \leq 2^5 9\,\GO \EXP ^{-\GO ^3\!/3}.
\end{array}
\end{equation*}

Using Lemma \ref{constd}, $d_0^{m,n}=1/(2i)^{2n+1}$ and the definition \eqref{dmn} for $d_{m,n}$ we have that
\begin{align*}
 d_{2,1}d_0^{2,1}&=-i2^3\binom{-1/2}{2}\binom{-1/2}{1}\Bigl(\frac{i}{2^3}\Bigl)=-\frac{3}{2^4}\\
 d_{2,0}d_0^{2,0}&=i2^2\binom{-1/2}{2}\Bigl(-\frac{i}{2}\Bigl)=\frac{3}{2^2}\\
d_{3,0}d_0^{3,0}&=i2^3\binom{-1/2}{3}\Bigl(-\frac{i}{2}\Bigl)=-\frac{5}{2^2}.
\end{align*}
We can then write equation \eqref{L103} as
\begin{equation*}%\label{N121}
 N(1,2,1)=\frac{1}{4}\sqrt{\frac{\pi}{2}}\GO ^{-1/2}\EXP ^{-\GO ^3\!/3}+{^1}E_{TT}
\end{equation*}
where
$$
{^1}E_{TT}={^1}E+{^1}E_{TR}
$$
satisfies
$$
|{^1}E_{TT}|\leq 2^7 \, 9\,  \GO ^{-\frac{7}{2}}\EXP ^{-\GO ^3\!/3}+ 2^5 9\, \GO ^{-2}\EXP ^{-\GO ^3\!/3} \leq 2^6 9\, \GO ^{-2}\EXP ^{-\GO ^3\!/3}.
$$

In an analogous way, equation \eqref{L104} can be written as
\begin{equation*}%\label{N120}
 N(1,2,0)=\sqrt{\frac{\pi}{2}}\GO ^{3/2}\EXP ^{-\GO ^3\!/3}+{^2}E_{TT}
\end{equation*}
where
$$
{^2}E_{TT}={^2}E+{^2}E_{TR}
$$
satisfies
$$
 |{^2}E_{TT}|\leq 2^6 9\,  \GO ^{-\frac{3}{2}}\EXP ^{-\GO ^3\!/3}+2^4 9\, \EXP ^{-\GO ^3\!/3}\leq 2^5 9\, \EXP ^{-\GO ^3\!/3}.
 $$
Finally,  equation \eqref{L104nou} can be written as
\begin{equation*}%\label{N120}
 N(1,3,0)=\frac{1}{3}\sqrt{\frac{\pi}{2}}\GO ^{5/2}\EXP ^{-\GO ^3\!/3}+{^3}E_{TT}
\end{equation*}
where
$$
{^3}E_{TT}={^3}E+{^3}E_{TR}
$$
satisfies
$$
 |{^3}E_{TT}|\leq 2^8 9\,  \GO ^{-\frac{1}{2}}\EXP ^{-\GO ^3\!/3}+2^5 9\,\GO \EXP ^{-\GO ^3\!/3}\leq 2^6 9\,\GO \EXP ^{-\GO ^3\!/3},
 $$
and this proves the Lemma.
\end{proof}
Using the approximations given in Lemma \ref{N1} we have from Lemmas \ref{boundSumLqk} and \ref{2ndapr}:

\begin{lemma}\label{4thapr}
For $\GO \geq 32$ and $ \EC  \GO \leq 1/8$, the Melnikov potential $\mathcal{L}$ given in~\eqref{LFourierLq} satisfies
\begin{equation}\label{potecialfinal}
\textr{\mathcal{L}=L_0+
2L_{1,-1}\cos (\SO -\AO )+2L_{1,-2}\cos (\SO -2\AO )+2L_{1,-3}\cos (\SO -3\AO )
+2\Re\{E_1\EXP ^{i\SO }\}+\mathcal{L}_{\geq 2}}
\end{equation}
with
\[
 \begin{array}{rcl}
\textr{2\,L_{1,-1}}&=&  \CT _1^{3,1}\sqrt{\frac{\pi}{8}}\GO ^{-1/2}\EXP ^{-\GO ^3\!/3}+E_3+E_5\\
\textr{2\,L_{1,-2}}&=& \CT _1^{2,2}\sqrt{2\pi}\GO ^{3/2}\EXP ^{-\GO ^3\!/3}+E_4+E_6 \\
\textr{2\,L_{1,-3}}&=&\CT _1^{3,3}\frac{\sqrt{2\pi}}{3}\GO ^{5/2}\EXP ^{-\GO ^3\!/3}+\tilde E_4+\tilde E_6
\end{array}
\]
and where $\mathcal{L}_{\geq 2}$ and $E_k$ with $k=1,3,4$ are given in equations \eqref{E1234} and
\[
 |E_5|\leq 2^{13}  \, 9 \,   \GO ^{-2}\EXP ^{-\GO ^3\!/3},\qquad
 |E_6|\leq 2^{11}  \, 9 \, \EC   \EXP ^{-\GO ^3\!/3} ,\qquad
 |\tilde E_6|\leq 2^{13}  \, 9 \, \GO  \EC ^2  \EXP ^{-\GO ^3\!/3}.
\]
\end{lemma}
\begin{proof}
 By Lemma \ref{N1} we have that $N(1,2,1)$, $N(1,2,0)$ and $N(1,3,0)$ are real and then coincide with their real part.
Equation \eqref{oldlemma8} gives the correct estimation of $\mathcal{L}$. To complete the proof is enough to take
$$
E_5 =\CT _1^{3,1}\cdot {^1}E_{TT}\, ,\quad E_6 =\CT _1^{2,2}\cdot {^2}E_{TT}\quad \text{and}\quad \tilde E_6 =\CT _1^{3,3}\cdot {^3}E_{TT}
$$
where ${^1}E_{TT}$, ${^2}E_{TT}$ and ${^3}E_{TT}$ are given in Lemma \ref{N1}.
Therefore by Proposition \ref{boundcqmn} we find directly the bounds of $E_5$, $E_6$ and $\tilde E_6$.

\end{proof}
The next Proposition contains the final asymptotic estimate for $\mathcal{L}_1$:

\begin{proposition}\label{5thapr}
\textr{For $\GO \geq 32$ and $ \EC  \GO \leq 1/8$, the Melnikov potential $\mathcal{L}$~\eqref{LFourierLq} is given by}
\eqref{potecialfinal} with:
%\begin{multline*}
%\mathcal{L}=L_0+\cos (\SO -\AO )\left(\sqrt{\frac{\pi}{8}}\GO ^{-1/2}\EXP ^{-\GO ^3\!/3}+E_3+E_5+E_7\right)
%-\cos (\SO -2\AO )\left(3\sqrt{2\pi}\EC \GO ^{3/2}\EXP ^{-\GO ^3\!/3}+E_4+E_6+E_8 \right)\\
%+\cos (\SO -3\AO )\left(\frac{19}{8}\sqrt{2\pi}\EC ^2\GO ^{5/2}\EXP ^{-\GO ^3\!/3}+\tilde E_4+\tilde E_6+\tilde E_8 \right)
%+2\Re\{E_1\EXP ^{i\SO }\}+\mathcal{L}_{\geq 2}
%\end{multline*}
\[
 \begin{array}{rcl}
\textr{2\,L_{1,-1}}&=&  \sqrt{\frac{\pi}{8}}\GO ^{-1/2}\EXP ^{-\GO ^3\!/3}+E_3+E_5+E_7\\
\textr{2\,L_{1,-2}}&=& -3\sqrt{2\pi}\EC \GO ^{3/2}\EXP ^{-\GO ^3\!/3}+E_4+E_6+E_8 \\
\textr{2\,L_{1,-3}}&=&\frac{19}{8}\sqrt{2\pi}\EC ^2\GO ^{5/2}\EXP ^{-\GO ^3\!/3}+\tilde E_4+\tilde E_6+\tilde E_8
\end{array}
\]
and where $\mathcal{L}_{\geq 2}$ and $E_k$ with $k=1,3,\dots 6$ and $\tilde E_k$ with $k=4,5,6$ are given in equations \eqref{E1234} and
\[
|E_7|\leq 98 \EC ^2\GO ^{-1/2}\EXP ^{-\GO ^3\!/3}\, ,\quad
 |E_8|\leq 98 2^2 \EC ^2\GO ^{3/2}\EXP ^{-\GO ^3\!/3}\, , \quad
 |\tilde E_8|\leq 98 2^2 \EC ^3\GO ^{5/2}\EXP ^{-\GO ^3\!/3}.
\]
\end{proposition}
\begin{proof}
 From Lemma \ref{ccalc} we have
\begin{align*}
\CT _1^{3,1}\sqrt{\frac{\pi}{8}}\GO ^{-1/2}\EXP ^{-\GO ^3\!/3}&=\sqrt{\frac{\pi}{8}}\GO ^{-1/2}\EXP ^{-\GO ^3\!/3}+E_7\\
\CT _1^{2,2}\sqrt{2\pi}\GO ^{3/2}\EXP ^{-\GO ^3\!/3}&=-3\sqrt{2\pi}\EC \GO ^{3/2}\EXP ^{-\GO ^3\!/3}+E_8 \\
\CT _1^{3,3}\frac{\sqrt{2\pi}}{3}\GO ^{5/2}\EXP ^{-\GO ^3\!/3}&=\frac{19}{8}\sqrt{2\pi}\EC ^2\GO ^{5/2}\EXP ^{-\GO ^3\!/3}+\tilde E_8
\end{align*}
with
\begin{align*}
E_7&=Q_1\sqrt{\frac{\pi}{8}}\GO ^{-1/2}\EXP ^{-\GO ^3\!/3}\\
E_8&=Q_2\sqrt{2\pi}\GO ^{3/2}\EXP ^{-\GO ^3\!/3}\\
\tilde E_8&=Q_5\sqrt{2\pi}\GO ^{5/2}\EXP ^{-\GO ^3\!/3}
\end{align*}
Therefore by Lemma \ref{4thapr} and the bounds of $Q_1$ and $Q_2$ given in Lemma \ref{ccalc} we conclude the proof.
\end{proof}
\subsection{Asymptotic estimate of $\mathcal{L}_0$}
\label{MP0}

It only remains to estimate the Fourier coefficient  $L_0=\mathcal{L}_0$ defined in~\eqref{eqn:LFourier3} or~\eqref{LFourierLq}.
\begin{lemma}\label{N0}
Let $N(q,m,n)$ be defined by equations \eqref{Nqmn}. Then for $m,n\in\mathbb{N}$, $m+n>0$,
$$
|N(0,m,n)|\leq   2^{m+n+2}\GO ^{-2m-2n+1}.
$$
\end{lemma}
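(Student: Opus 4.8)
The plan is to specialize the integral formula~\eqref{Nqmn} to $q=0$, where the oscillatory exponential disappears and one is left with an elementary integral that can be bounded directly on the real line.

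First I set $q=0$ in~\eqref{Nqmn}: since $\mathrm{e}^{iq(\tau+\tau^3/3)\GO^3/2}\equiv 1$ in this case,
\[
N(0,m,n)=\frac{2^{m+n}}{\GO^{2m+2n-1}}\binom{-1/2}{m}\binom{-1/2}{n}\int_{-\infty}^{\infty}\frac{d\tau}{(\tau-i)^{2m}(\tau+i)^{2n}}.
\]
Next I bound the binomial factors: as already shown in the proof of Lemma~\ref{binombound}, $\left|\binom{-1/2}{s}\right|\le 1$ for every integer $s\ge 0$ (indeed $\le \mathrm{e}^{-1/2}$ when $s\ge1$), hence $\left|\binom{-1/2}{m}\binom{-1/2}{n}\right|\le 1$. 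Finally I bound the integral: for real $\tau$ one has $|\tau-i|=|\tau+i|=\sqrt{\tau^2+1}$, so, using the hypothesis $m+n\ge 1$ (which gives both absolute convergence and the monotonicity estimate $(\tau^2+1)^{m+n}\ge\tau^2+1$),
\[
\left|\int_{-\infty}^{\infty}\frac{d\tau}{(\tau-i)^{2m}(\tau+i)^{2n}}\right|
\le \int_{-\infty}^{\infty}\frac{d\tau}{(\tau^2+1)^{m+n}}
\le \int_{-\infty}^{\infty}\frac{d\tau}{\tau^2+1}=\pi< 2^{2}.
\]
Multiplying the three estimates yields $|N(0,m,n)|\le 2^{m+n}\pi\,\GO^{-2m-2n+1}< 2^{m+n+2}\GO^{-2m-2n+1}$, which is the asserted bound.

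There is no genuine obstacle in this argument; the only point needing a moment of attention is that the assumption $m+n>0$ is precisely what is used to pass from $(\tau^2+1)^{m+n}$ to $\tau^2+1$ in the denominator and to guarantee the convergence of the integral, and that the crude constant $\pi$ comfortably fits under $2^{2}$, so no sharper evaluation (e.g. by residues at $\tau=\pm i$) is required.
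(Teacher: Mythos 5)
Your argument is correct and is essentially the paper's own proof: both specialize \eqref{Nqmn} to $q=0$, use $|\tau\pm i|\geq 1$ on the real line to reduce the integrand to $1/(1+\tau^2)$ (which is exactly where $m+n>0$ is needed), bound the binomial coefficients via Lemma~\ref{binombound}, and absorb the resulting factor of $\pi$ into $2^2$. No further comment is needed.
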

\begin{proof}
 Since $\tau\in\mathbb{R}$ in the integral \eqref{Nqmn}, it is clear that
$$
\frac{1}{|\tau+i|},\frac{1}{|\tau-i|}\leq 1
$$
and then
$$
\frac{1}{|\tau+i|^{2n}}\frac{1}{|\tau-i|^{2m}}\leq\frac{1}{1+\tau ^2}\ .
$$
For $n,m> 0$, using equation \eqref{Nqmn} and Lemma \ref{binombound} to bound $d_{m,n}$, the Lemma follows:
\begin{align*}
|N(0,m,n)|
&\leq2^{m+n}\GO ^{-2m-2n+1}\EXP ^{-1/2}\int_{-\infty}^{\infty}\frac{d\tau}{1+ \tau^2}\\
&=2^{m+n}\GO ^{-2m-2n+1}\EXP ^{-1/2}\pi\leq 2^{m+n+2} \GO ^{-2m-2n+1}.
\end{align*}
\end{proof}
\begin{lemma}\label{L0k}
 Let $k\in\mathbb{N}$ and $L_{0, k}$ defined by equation~\eqref{eqn:LFourier3}. Then
\[
 L_{0,k}=\sum_{l\geq k+1}\CT _{0}^{2l-k,-k}N(0,l-k,l).
\]
\end{lemma}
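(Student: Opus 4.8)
The plan is to obtain the formula as an immediate corollary of Proposition~\ref{RLFourierLq}: the coefficients $L_{0,k}$ are just the $q=0$ instances of the general expressions \eqref{FourierLqkN}, and all that has to be checked is that a single summand drops out when $q=0$.

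First I would observe that for $k=0$ and $k=1$ there is nothing to prove: formula \eqref{FourierLqkN:00} with $q=0$ reads literally $L_{0,0}=\sum_{l\geq 1}\CT_0^{2l,0}N(0,l,l)$, and \eqref{FourierLqkN:q1} with $q=0$ reads literally $L_{0,1}=\sum_{l\geq 2}\CT_0^{2l-1,-1}N(0,l-1,l)$, both of which already have the announced shape $\sum_{l\geq k+1}\CT_0^{2l-k,-k}N(0,l-k,l)$. For $k\geq 2$, formula \eqref{FourierLqkN:qk} with $q=0$ gives $L_{0,k}=\sum_{l\geq k}\CT_0^{2l-k,-k}N(0,l-k,l)$, so the lemma follows once I show that the bottom term $l=k$, namely $\CT_0^{k,-k}\,N(0,0,k)$, vanishes.

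The one genuine step, then, is the identity $N(0,0,k)=0$ for $k\geq 1$. I would read this directly off the integral representation \eqref{Nqmn}: setting $q=0$ and $m=0$ there, and using $\binom{-1/2}{0}=1$, one gets
\[
N(0,0,k)=\frac{2^{k}}{\GO^{2k-1}}\binom{-1/2}{k}\int_{-\infty}^{\infty}\frac{d\tau}{(\tau+i)^{2k}},
\]
and the remaining integral is zero by Cauchy's theorem: the integrand $(\tau+i)^{-2k}$ is holomorphic on the closed upper half-plane (its only pole, of order $2k$, lies at $\tau=-i$) and decays like $|\tau|^{-2k}$ with $2k\geq 2$, so closing the contour by a large semicircle in the upper half-plane picks up no residue while the arc contribution tends to $0$ as the radius grows. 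Hence $\CT_0^{k,-k}N(0,0,k)=0$, the $l=k$ term disappears, and $L_{0,k}=\sum_{l\geq k+1}\CT_0^{2l-k,-k}N(0,l-k,l)$.

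I do not anticipate any real obstacle here: the entire content is the elementary contour-integral computation above, and the only place where the hypothesis $k\in\mathbb{N}$ (i.e.\ $k\geq 1$) is used is to guarantee the decay exponent $2k\geq 2$ needed to kill the semicircle; for $k=0$ the asserted formula is just \eqref{FourierLqkN:00} itself.
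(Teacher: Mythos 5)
Your proof is correct and follows essentially the same route as the paper: both reduce the statement to the vanishing of the bottom term $l=k$ in \eqref{FourierLqkN:qk}, i.e.\ to $N(0,0,k)=0$, and then evaluate $\int_{-\infty}^{\infty}(\tau+i)^{-2k}\,d\tau=0$ (the paper by writing down the explicit antiderivative $-\tfrac{1}{2k-1}(\tau+i)^{-(2k-1)}$ rather than by closing a contour, but this is an immaterial difference). The paper also records $N(0,k,0)=0$, which is not needed for the formula as stated but covers the symmetric coefficient $L_{0,-k}$ from \eqref{FourierLqkN:qmk}.
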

\begin{proof}
 From equations \eqref{FourierLqkN}, we have just to prove $N(0,0,k)=N(0,k,0)=0$ for $k\geq 2$.
By equations \eqref{Nqmn} this reduces to show that
$$
\int_{-\infty}^{\infty}\frac{d\tau}{(\tau\pm i)^{2k}}=0
$$
where the positive sign in the denominator correspond to $I(0,0,k)$ and the negative to $I(0,k,0)$.
Since the variable $\tau\in\mathbb{R}$ this integral is trivially zero
$$
\int_{-\infty}^{\infty}\frac{d\tau}{(\tau\pm i)^{2k}}=-\frac{1}{2k-1}\frac{1}{(\tau\pm i)^{2k-1}}\biggl|_{-\infty}^{\infty}=0.
$$
\end{proof}
\begin{lemma}\label{boundL0k}
Let $L_{0,k}$ be defined by equations \eqref{FourierLqkN}
for $k\geq 0$. If $\GO \geq 32$,
$$
|L_{0,k}| \leq  2^{2k+8}\EC ^k  \GO ^{-2k-3}.
$$
\end{lemma}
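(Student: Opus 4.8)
The starting point is the closed expression for $L_{0,k}$: for $k=0$ this is~\eqref{FourierLqkN:00}, $L_{0,0}=\sum_{l\ge 1}\CT_0^{2l,0}N(0,l,l)$, while for $k\ge 1$ it is the form given by Lemma~\ref{L0k}, namely $L_{0,k}=\sum_{l\ge k+1}\CT_0^{2l-k,-k}N(0,l-k,l)$, where the $l=k$ term has already been dropped because $N(0,0,k)=0$. In every case the inner indices of $N$ are $(l-k,l)$, both nonnegative with positive sum, so Lemma~\ref{N0} applies and gives $|N(0,l-k,l)|\le 2^{2l-k+2}\GO^{-4l+2k+1}$.

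The one non-mechanical point is to extract the factor $\EC^k$. It does not come from $N$ but from the Fourier coefficient $\CT_0^{2l-k,-k}$: by the reflection symmetry~\eqref{simetriesc} with $q=0$ one has $\CT_0^{2l-k,-k}=\CT_0^{2l-k,k}$, and now the upper index $m=k$ is nonnegative (and $\le (2l-k)+1$ since $l\ge k+1$), so Proposition~\ref{boundcqmn} with $q=0$ yields $|\CT_0^{2l-k,k}|\le 2^{2l-k+1}\EC^{k}$ (for $k=0$ this is simply $|\CT_0^{2l,0}|\le 2^{2l+1}$). Multiplying the two bounds gives, for each term of the series,
\[
\left|\CT_0^{2l-k,-k}N(0,l-k,l)\right|\le 2^{4l-2k+3}\,\EC^{k}\,\GO^{-4l+2k+1}.
\]

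To conclude I would sum over $l\ge k+1$. Substituting $l=k+1+j$ with $j\ge 0$, the exponents become $4l-2k+3=2k+7+4j$ and $-4l+2k+1=-2k-3-4j$, whence
\[
|L_{0,k}|\le 2^{2k+7}\,\EC^{k}\,\GO^{-2k-3}\sum_{j\ge 0}\Bigl(\frac{2^{4}}{\GO^{4}}\Bigr)^{j}.
\]
Since $\GO\ge 32=2^{5}$ we have $2^{4}/\GO^{4}\le 2^{-16}\le 1/2$, so the geometric series is bounded by $2$, and therefore $|L_{0,k}|\le 2^{2k+8}\EC^{k}\GO^{-2k-3}$, which is exactly the asserted bound.

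No serious obstacle arises: the whole estimate is dominated by the single term $l=k+1$, and the hypothesis $\GO\ge 32$ makes the remaining tail negligible. The only step that is not pure bookkeeping is rewriting $\CT_0^{2l-k,-k}$ via the symmetry~\eqref{simetriesc} so that the estimate of Proposition~\ref{boundcqmn} producing the power $\EC^{k}$ becomes applicable; everything else amounts to tracking powers of $2$ and of $\GO$.
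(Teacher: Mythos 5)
Your proof is correct and follows essentially the same route as the paper: reduce to the series $\sum_{l\geq k+1}\CT_0^{2l-k,-k}N(0,l-k,l)$, bound each factor, and sum the resulting geometric series using $\GO\geq 32$. In fact you are slightly more explicit than the paper, which simply cites Proposition~\ref{boundcqmn} (and should cite Lemma~\ref{N0} rather than Proposition~\ref{boundN} for the $q=0$ case); your use of the symmetry~\eqref{simetriesc} to turn $\CT_0^{2l-k,-k}$ into $\CT_0^{2l-k,k}$ is precisely the step needed to extract the factor $\EC^k$, which the paper leaves implicit.
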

\begin{proof}
From Lemma \ref{L0k} we have
\[
|L_{0,k}| \leq \sum_{l\geq k+1}|\CT _{0}^{2l-k,- k}||N(0,l-k,l)| ,
\]
and by Propositions \ref{boundcqmn} and \ref{boundN},
\[
|L_{0, \pm k}| \leq  2^{-2k+3}\EC ^k\GO ^{2k+1}\sum_{l\geq k+1}(2^4\GO ^{-4})^l
\leq  \EC ^k 2^{2k+8} \GO ^{-2k-3}.
\]
\end{proof}

\begin{lemma}\label{L0_1apr}
Let $L_0=\mathcal{L}_0$ be defined by equations~\eqref{eqn:LFourier3} or~\eqref{LFourierLq}. Then
for $\GO \geq 32$
\begin{align*}
L_0&=L_{0,0}+(\CT _0^{3,1}\frac{3}{4}\pi \GO ^{-5}+F_2)\cos(\AO )+F_3\\
L_{0,0}&=\CT _0^{2,0}\frac{\pi}{2}\GO ^{-3}+F_1
\end{align*}
where
\[
 |F_1|\leq 2^{12} \GO ^{-7},\qquad
 |F_2|\leq  2^{15}\EC  \GO ^{-9},\qquad
 |F_3|\leq 2^{14} \EC ^2 \GO ^{-7}.
\]
\end{lemma}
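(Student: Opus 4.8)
The plan is to peel off from the Fourier Cosine series $L_0=\mathcal{L}_0=L_{0,0}+2\sum_{k\geq1}L_{0,k}\cos k\AO$ (see \eqref{eqn:LFourier3}) the only two genuinely dominant contributions, namely $L_{0,0}$ itself and the leading ($l=2$) piece of the coefficient $2L_{0,1}$ of $\cos\AO$, and to dump everything else into $F_1,F_2,F_3$. Explicitly, I would set $F_3:=2\sum_{k\geq2}L_{0,k}\cos k\AO$, so that $L_0=L_{0,0}+2L_{0,1}\cos\AO+F_3$; then, using Lemma~\ref{L0k} together with \eqref{FourierLqkN:00} and \eqref{FourierLqkN:q1}, I would write $L_{0,0}=\CT_0^{2,0}N(0,1,1)+F_1$ with $F_1:=\sum_{l\geq2}\CT_0^{2l,0}N(0,l,l)$, and $2L_{0,1}=2\CT_0^{3,-1}N(0,1,2)+F_2$ with $F_2:=2\sum_{l\geq3}\CT_0^{2l-1,-1}N(0,l-1,l)$.

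The second step is to evaluate $N(0,1,1)$ and $N(0,1,2)$ \emph{exactly}. Since $q=0$ removes the exponential in \eqref{Nqmn}, both reduce to elementary rational-function integrals: the residue theorem gives $\int_{-\infty}^{\infty}(1+\tau^2)^{-2}\,d\tau=\pi/2$, hence $N(0,1,1)=\tfrac{\pi}{2}\GO^{-3}$, and a second-order residue at $\tau=i$ gives $\int_{-\infty}^{\infty}(\tau-i)^{-2}(\tau+i)^{-4}\,d\tau=-\pi/4$, hence $N(0,1,2)=\tfrac{3\pi}{8}\GO^{-5}$. Invoking the symmetry $\CT_0^{3,-1}=\CT_0^{3,1}$ from \eqref{simetriesc}, the $l=2$ term of $2L_{0,1}$ becomes exactly $\CT_0^{3,1}\tfrac34\pi\GO^{-5}$, and $L_{0,0}$ acquires its stated leading term $\CT_0^{2,0}\tfrac{\pi}{2}\GO^{-3}$, so the announced closed-form parts are in place.

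It then remains only to bound the three tails. For $F_1$ and $F_2$ I would combine Lemma~\ref{N0}, $|N(0,m,n)|\leq 2^{m+n+2}\GO^{-2m-2n+1}$, with Proposition~\ref{boundcqmn}, but \emph{after} rewriting $\CT_0^{2l,0}$ and $\CT_0^{2l-1,-1}=\CT_0^{2l-1,1}$ via \eqref{simetriesc} so that the second index is nonnegative; this is what produces the sharp bounds $|\CT_0^{n,0}|\leq2^{n+1}$ and $|\CT_0^{n,1}|\leq2^{n+1}\EC$ (the latter being precisely why $F_2$ carries a factor $\EC$ rather than being merely $O(\GO^{-9})$). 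The resulting series in $l$ are then geometric with ratio $16\GO^{-4}$, which for $\GO\geq32$ are dominated by their first terms, yielding $|F_1|\leq 2^{12}\GO^{-7}$ and $|F_2|\leq2^{15}\EC\GO^{-9}$. For $F_3$ I would simply apply Lemma~\ref{boundL0k}, $|L_{0,k}|\leq2^{2k+8}\EC^k\GO^{-2k-3}$: since $\EC\GO\leq1/8$ the quotient of consecutive bounds is $4\EC\GO^{-2}\leq(2\GO^3)^{-1}$, so $|F_3|\leq2\sum_{k\geq2}|L_{0,k}|\leq2^{14}\EC^2\GO^{-7}$. The only conceptual point in the whole argument is this systematic use of \eqref{simetriesc} to recover the correct powers of $\EC$; the residue evaluations and the summation of the geometric remainders are routine, and the remaining effort is the bookkeeping needed to keep the numerical constants inside the stated powers of $2$.
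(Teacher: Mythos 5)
Your proposal is correct and follows essentially the same route as the paper: the same splitting of $L_0$ into $L_{0,0}$, the $l$-leading term of $2L_{0,1}\cos\AO$, and the tails $F_1,F_2,F_3$ defined identically; the same exact residue evaluations of $N(0,1,1)$ and $N(0,1,2)$; and the same bounds via Lemma~\ref{N0}, Proposition~\ref{boundcqmn} and Lemma~\ref{boundL0k}. Your explicit remark that one must pass through the symmetry~\eqref{simetriesc} to place the second index in the range where Proposition~\ref{boundcqmn} yields the factor $\EC$ is a point the paper leaves implicit, but it is the same argument.
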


\begin{proof}
From Proposition \ref{RLFourierLq} we know that
$$
L_0=L_{0,0}+2\sum_{k\geq 1} L_{0,k}\cos{k\AO },
$$
and from Lemma \ref{L0k} we have that
\begin{equation}\label{L0s}
\begin{split}
 L_{0,0}&=\CT _{0}^{ 2,0}N(0,1,1)+\sum_{l\geq 2}\CT _{0}^{2l,0}N(0,l,l) \\
 L_{0,1}&=\CT _{0}^{3,-1}N(0,1,2)+\sum_{l\geq 3}\CT _{0}^{2l-1,-1}N(0,l-1,l)\\
 L_{0,k}&=\sum_{l\geq k+1}\CT _{0}^{2l-k,-k}N(0,l-k,l)\qquad\text{for $k\geq2$}.
\end{split}
\end{equation}
Introducing
\[
 F_1=\sum_{l\geq 2}\CT _{0}^{2l,0}N(0,l,l),\quad
 F_2=2 \sum_{l\geq 3}\CT _{0}^{2l-1,-1}N(0,l-1,l),\quad
 F_3=2\sum_{k\geq 2}\cos {k\AO }L_{0,k},% F_3&=2\Re\Bigl\{\sum_{k\geq2}\EXP ^{ik\AO }\sum_{l\geq k+1}\CT _{0}^{2l-k,-k}N(0,l-k,l)\Bigl\}
\]
and using $\GO \geq 32$ in Lemmas \ref{N0}, \ref{boundL0k} and Proposition \ref{boundcqmn}, we have
\begin{subequations}\label{Fs}
\begin{align*}
 |F_1|&\leq  2^3 \GO   \sum_{l\geq 2} (2^4\GO ^{-4})^l \leq  2^{12}  \GO ^{-7}\\
 |F_2|&\leq 2^2 \EC  \GO ^3 \sum_{l\geq 3}(2^4\GO ^{-4})^l\leq  2^{15}\EC  \GO ^{-9}\\
|F_3|&\leq2\sum_{k\geq2}|L_{0,k}|%\notag\\% |F_3|&\leq2\sum_{k\geq2}\sum_{l\geq k+1}\EC ^{k}K  2^{2l-k}\GO ^{-4l+2k+1}
\leq  2^{14}\EC ^2 \GO ^{-7} .
\end{align*}
\end{subequations}
 From definition \eqref{Nqmn} we have now that
\begin{align*}
 N(0,1,1)&=\phantom{-}\frac{2^2}{\GO ^3}\binom{-1/2}{1}\binom{-1/2}{1}\int_{-\infty}^{\infty}\frac{d\tau}{(\tau^2+1)^2}=2^2\Bigl(-\frac{1}{2}\Bigl)\Bigl(-\frac{1}{2}\Bigl)\GO ^{-3}=\frac{\pi}{2}\GO ^{-3} ,\\
 N(0,1,2)&=\frac{2^3}{\GO ^5}\binom{-1/2}{1}\binom{-1/2}{2}\int_{-\infty}^{\infty}\frac{d\tau}{(\tau-i)(\tau+i)^2}=2^3\Bigl(-\frac{1}{2}\Bigl)\Bigl(\frac{3}{2^3}\Bigl)\Bigl(-\frac{\pi}{4}\Bigl)\GO ^{-5}=\frac{3}{8}\pi \GO ^{-5} .
\end{align*}
From these equations, substituting equations \eqref{L0s} in the definition of $L_0$ and the bounds given in equations \eqref{Fs}
we have proven this Lemma.
\end{proof}
A refinement of this Lemma is
\begin{lemma}%\label{L0_2apr}
Let $L_0=\mathcal{L}_0$ be defined by equations~\eqref{eqn:LFourier3} or~\eqref{LFourierLq}.
Then for $\GO \geq 2^{3\!/2}$
\begin{align*}
L_0&=L_{0,0} +(-\frac{15}{8}\pi \EC  \GO ^{-5}+F_2+F_5)\cos(\AO )+F_3\\
L_{0,0}&=\frac{\pi}{2}\GO ^{-3}+F_1+F_4
\end{align*}
where $F_1$, $F_2$ and $F_3$ are given in Lemma \ref{L0_1apr} and
\[
|F_4|\leq 2 \,  98  \, \GO ^{-3}\EC ^2, \qquad
|F_5|\leq 2^2 \,  98 \,   \GO ^{-5}\EC ^2 .
\]
\end{lemma}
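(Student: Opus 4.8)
The plan is to obtain Lemma~\ref{L0_2apr} as an immediate consequence of Lemma~\ref{L0_1apr} together with the explicit values of the Fourier coefficients $\CT_0^{2,0}$ and $\CT_0^{3,1}$ computed in Lemma~\ref{ccalc}. Recall that Lemma~\ref{L0_1apr} provides
\[
L_0 = L_{0,0} + \Bigl(\CT_0^{3,1}\tfrac34\pi\GO^{-5} + F_2\Bigr)\cos\AO + F_3,
\qquad
L_{0,0} = \CT_0^{2,0}\tfrac{\pi}{2}\GO^{-3} + F_1,
\]
with $F_1,F_2,F_3$ satisfying the bounds stated there (which already rest on $\GO\ge 32$), while Lemma~\ref{ccalc} gives $\CT_0^{2,0} = 1 + Q_3$ and $\CT_0^{3,1} = -\tfrac52\EC + Q_4$ with $|Q_3|,|Q_4| \le 98\,\EC^2$ (this only requires $0\le\EC\le1$, which always holds).

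First I would substitute $\CT_0^{2,0} = 1 + Q_3$ into the expression for $L_{0,0}$, obtaining $L_{0,0} = \tfrac{\pi}{2}\GO^{-3} + F_1 + F_4$ with $F_4 := Q_3\,\tfrac{\pi}{2}\GO^{-3}$. Next I would substitute $\CT_0^{3,1} = -\tfrac52\EC + Q_4$ into the coefficient of $\cos\AO$, which produces the leading term $-\tfrac{15}{8}\pi\EC\GO^{-5}$ together with the remainder $F_5 := Q_4\,\tfrac34\pi\GO^{-5}$. This is exactly the decomposition claimed in the statement.

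Finally, the two new error terms are estimated directly: from $|Q_3|\le 98\,\EC^2$ and $\pi/2<2$ one gets $|F_4| \le 2\cdot 98\,\GO^{-3}\EC^2$, and from $|Q_4|\le 98\,\EC^2$ and $3\pi/4<4=2^2$ one gets $|F_5| \le 2^2\cdot 98\,\GO^{-5}\EC^2$. Both estimates use only $\GO>0$, so the mild hypothesis $\GO\ge 2^{3/2}$ suffices for $F_4,F_5$ (the stronger $\GO\ge 32$ being inherited through $F_1,F_2,F_3$ from Lemma~\ref{L0_1apr} and guaranteeing that $F_4,F_5$ are genuinely subdominant with respect to the leading terms $\tfrac{\pi}{2}\GO^{-3}$ and $\tfrac{15}{8}\pi\EC\GO^{-5}$). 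There is no real obstacle here: the argument is a one-line substitution followed by trivial estimates, and the only point requiring minor care is the bookkeeping of the numerical constants $\pi/2$ and $3\pi/4$ against the stated powers of two.
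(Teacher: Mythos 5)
Your proposal is correct and follows essentially the same route as the paper: substitute the values of $\CT_0^{2,0}$ and $\CT_0^{3,1}$ from Lemma~\ref{ccalc} into Lemma~\ref{L0_1apr}, set $F_4=\tfrac{\pi}{2}Q_3\GO^{-3}$ and $F_5=\tfrac34\pi Q_4\GO^{-5}$, and bound them via $|Q_3|,|Q_4|\le 98\,\EC^2$ together with $\pi/2<2$ and $3\pi/4<2^2$. The constant bookkeeping and the remark about the hypotheses on $\GO$ match the paper's argument.
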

\begin{proof}
 In Lemma \ref{ccalc} we have computed the constants $\CT _0^{2,0}$ and $\CT _0^{3,1}$, then by setting
\[
 F_4= \frac{\pi}{2}Q_3 \GO ^{-3},\qquad
 F_5= \frac{3}{4}\pi Q_4 \GO ^{-5}\cos \AO ,
\]
and using the bounds for $Q_3$ and $Q_4$ we find the desired bound for $F_4$ and $F_5$.
\end{proof}

With this Lemma we can rewrite Proposition~\ref{5thapr} exactly as Theorem~\ref{thepropositionmain}, and so it is proven.

%\section{Proofs of Propositions \ref{RLFourierLq}, \ref{boundN}, \ref{lema1refinado2N}}\label{proofs}

\section*{Acknowledgments}
The authors are indebted to Marcel Gu\`ardia, Pau Mart\'{\i}n, Regina Mart\'{\i}nez, Eva Miranda and Carles Sim\'o
for helpful discussions.

\bibliographystyle{alphabbrv}
\bibliography{ERTBP}
\end{document}